\documentclass{amsart}
\usepackage{amsmath,amssymb}
\parindent=0pt
\newtheorem{theorem}{Theorem}[section]
\theoremstyle{definition}
\newtheorem{definition}[theorem]{Definition}
\newtheorem{lemma}[theorem]{Lemma}

\newtheorem{corollary}[theorem]{Corollary}

\numberwithin{equation}{section}

\def\min{\operatorname{Min}}
\def\max{\operatorname{Max}}

\title {Spectrum of the 1-Laplacian and Cheeger's constant on graphs  }

\author {K.C. Chang}
\address{LMAM, School of Math. Sci.\\ Peking Univ.\\ Beijing 100871,
China\\ kcchang@math.pku.edu.cn}

\subjclass{Primary 05C50, 05C35, 58E05, 58C40, 05C75,}
\keywords{Spectral graph theory, 1-Laplace operator, Cheeger's constant, Critical point theory}

\begin{document}

\maketitle

\begin{abstract} We develop a nonlinear spectral graph theory, in which the Laplace operator is replaced by the $1-$ Laplacian $\Delta_1$. The eigenvalue problem is to solve a nonlinear system involving a set valued function. In the study, we investigate the structure of the solutions, the minimax characterization of eigenvalues, the multiplicity theorem, etc. The eigenvalues as well as the eigenvectors are computed for several elementary graphs. The graphic feature of eigenvalues are also studied. In particular, Cheeger's constant, which has only some upper and lower bounds in linear spectral theory, equals to the first non-zero $\Delta_1$ eigenvalue for connected graphs.
\end{abstract}

\section{Introduction}

The Laplace operator is a differential operator acting on
functions defined on a manifold $M$, $\Delta u=-div (\nabla u)$.
It can be seen as the differential of the Dirichlet functional
$$D(u)=  \int_M |\nabla u(x)|^2 dx$$
on the Sobolev space $H^1(M)$. The eigenvalue problem is to find a pair $(\lambda, u)\in R^1 \times H^1(M)$
satisfying
$$ \Delta u=\lambda u.$$

While the $1-$ Laplace operator $\Delta_1$ is defined to be the subdifferential of the total variation functional $\|D u\|(M)$ for functions on Bounded Variation Space $BV(M)$, where
$$\|D u\|(M):=sup \{ \int_M u \,div (\varphi) dx\,|\, \varphi\in C^1_0(M, R^n), |\varphi_k(x)|\le 1,\, 1\le k\le n,\, a.e. x\in M\}.$$
is globally defined.
\vskip 0.5cm
Formally,
$$ \Delta_1 u=-div(\frac{\nabla \,u}{|\nabla \,u|}),$$
and the eigenvalue problem is to find a pair $(\mu, u)\in \mathbb{R}^1\times BV(M)$ satisfying
$$\Delta_1 u\in\mu Sgn(u).$$

As an example, Chang \cite{refC3} studied the eigenvalue problem for $\Delta_1$ on the unit interval $[0,1]$. The spectrum for the Neumann boundary problem is
$$ \sigma(\Delta_1)=\{2k\,|\, k=0, 1, 2,\cdots,\}$$
and the associate eigenfunctions are
$$ \pm \psi_k(x)=\pm sgn (cos(k\pi x)),$$
in contrast with the spectrum of the Neumann problem for $\Delta$ on the unit interval $[0,1]$:
$$ \sigma(\Delta)=\{2k\,|\, k=0, 1, 2,\cdots,\},$$
and the associate eigenfunctions:
$$ \pm \psi_k(x)=\pm cos(k\pi x).$$

The solutions for equations involving $\Delta$ are smooth, while those for $\Delta_1$ may be discontinuous. Since solutions in many interesting problems e.g., in the signal processing and in the image processing etc., may be discontinuous, the $1-$ Laplace operator $\Delta_1$ has been received much attention in recent years.
\vskip 0.5cm

Interestingly, Kawohl, B., Fridman, V. \cite{refKF} and Kawohl, B., Schuricht, F. \cite{refKS} studied the connection between Cheeger's constant $h(\Omega)$ (see Cheeger\cite{refCe}) and the first eigenvalue of $\Delta_1$ under Dirirchet boundary condition on a bounded domain $\Omega$ in $R^n$. They showed:
$$ \lim_{p\to 1+0}\lambda_p(\Omega)=h(\Omega),$$
where $\lambda_p(\Omega)$ is the first eigenvalue of the $p-$ Laplacian, and
$$\Delta_p u=div (|\nabla\, u|^{p-2}\nabla\, u), \,\,1<p<\infty $$
on $\Omega$ with Dirichlet boundary condition.

More recently, B\"{u}hler and Hein \cite{refBuH1}\cite{refBuH2} studied the $p-$ Laplacian on graphs, and found that the
Cheeger's constant $h(G)$ on a connected graph is the limit of the second eigenvalues for the $p-$ Laplacian, as $p\to 1$. Szlam and Bresson \cite{refSB} studied the relationship between the total variation of a graph and the Cheeger Cuts.

\vskip 0.5cm
$\Delta_1$ is in some sense the limit of $\Delta_p$ as $p\to 1,$ and is exactly the subdifferential of the total variation. This inspired us to study the eigenvalue problem for $\Delta_1$ on graphs, in particular, the connection between the first nonzero eigenvalue of $\Delta_1$ and Cheeger's constant.

\vskip 0.5cm
The Spectral theory for the Laplace operator on graphs is a fruitful field in graph theory, one can find many valuable monographs on this topics e.g., Brouwer, Haemers \cite{refBH}, Biyikpglu, Leydold and Stadler\cite{refBLS}, Chung\cite{refCh}  etc. and a vast of references therein.
\vskip 0.5cm
Given an undirected graph $G=(V, E)$ with vertex set $V=\{1, 2, \cdots, n\}$ and edge set $E$, each edge $e$ is a pair of vertices $(x, y)$. To the edge $e\in E$, we assign an orientation, let $x$  be the head, and $y$ be the tail, they are denoted by $x= e_h,$ and $ y= e_t$ respectively.
\vskip 0.5cm
Let $m$ be the number of edges in $E$.
An incidence matrix $B=(b_{ei})$ is a $m\times n$ matrix:
\begin{equation}b_{ei}=\left\{\begin{array} {l}
1 \,\,\,\,\,\,\,\,\mbox{if}\,\,\,i= e_h,\\
-1 \,\,\,\,\,\,\,\mbox{if}\,\,\,i= e_t.\\
0 \,\,\,\,\,\,\,\,i\notin e.
\end{array}\right.
\end{equation}
where $e\in E,\, i\in V.$

For any vertex $i$, $d_i$, the degree of $i$, is defined to be the number of all edges passing through $i$, i.e.,
$$ d_i= card(\{e\in E\,|\, i\in e\})\,\, i=1, 2,\cdots, n.$$
Let $D=diag\{ d_1, d_2,\cdots, d_n\}$, and $d=\Sigma^n_{i=1} d_i.$.

$B$ is the counterpart of the differential operator $grad$ on graphs, and the corresponding Laplace operator reads as:
$$ L=B^T B=(l_{ij}),$$
where
\begin{equation*}l_{ij}=\left\{\begin{array} {l}
-1 \,\,\,\,\,\,\,\,\mbox{if}\,\,\,{i,j}\in E, \mbox{and}\, i\neq j\\
d_i \,\,\,\,\,\,\,\mbox{if}\,\,\,i=j.\\
0 \,\,\,\,\,\,\,\,\mbox{otherwise}.
\end{array}\right.
\end{equation*}
It is easily seen that $L$ is independent to the choice of orientation.

The Chung's version \cite{refCh} of the Laplacian is modified to be:
\begin{equation}
\textbf{L}=D^{-1/2}L D^{-1/2},
\end{equation}
with the convention $D^{-1}_{ii}=0$ for $d_i=0$.

The eigenpair for $L$ on $G$ is the solution $(\lambda, \phi)\in R^1\times \mathbb{R}^n\backslash \{\theta\}$ of the system, where $\theta$ is the $0$ vector:
\begin{equation}
L \phi=\lambda D \phi.
\end{equation}

The Dirichlet function on a graph becomes
\begin{equation}
J(x)=\frac{1}{2}\Sigma_{i=1}^n\Sigma_{j\sim i}(x_i-x_j)^2,
\end{equation}
where $i\sim j$ means that $i$ is adjacent to $j$, and vice versa.

It is easy to see that an eigenvector $\phi$ of the system $(1.3)$ is a critical point of the Dirichlet function $J$ under the constraint:
\begin{equation*}
\Sigma^n_{i=1} d_i |x_i|^2=1.
\end{equation*}
While the eigenvalue $\lambda$ is the value of Dirichlet function at $\phi$.
\vskip 0.5cm

In a parallel way, we introduce the $1-$ Laplace operator on graphs, which precise formulation will be given in section 2.
\begin{equation}
\Delta_1 x= B^T Sgn(Bx),
\end{equation}
where $B$ is the incidence matrix, and $Sgn: \mathbb{R}^n \to (2^{\mathbb{R}})^n$ is a set valued mapping:
\begin{equation*}
Sgn(y)=(Sgn(y_1),Sgn(y_2),\cdots,  Sgn(y_n))\,\,\,\forall\, y=(y_1, y_2,\cdots, y_n),
\end{equation*}
in which
\begin{equation} Sgn(t)= \left\{\begin{array} {l}
1 \,\,\,\,\,\,\,\mbox{if}\,\,  t>0,\\
-1 \,\,\,\,\,\,\,\mbox{if} \,\, t<0,\\
$[-1, 1]$ \,\,\,\,\,\,\,\mbox{if}\,\,  t=0,
\end{array}\right.
\end{equation}
is a set valued function. The addition of two subsets $A, B\subset R^n$ is the set $\{x+y\,|\, x\in A, y\in B\},$ and for a scalar $\alpha$, the scalar multiplication $\alpha A$ is the set $\{\alpha x\,|\, x\in A\}.$

Also, $\Delta_1$ is independent to the choice of orientation.
\vskip 0.5cm

We use the notation:
\begin{equation}
 X=\{ x=(x_1, x_2, \cdots, x_n)\in \mathbb{R}^n\,|\,\Sigma^n_{i=1} d_i |x_i|=1.\}.
 \end{equation}

The eigenvalue problem (see Definition 2.4 below) for $\Delta_1$ is to find eigenpairs $(\mu, \phi)\in R^1\times X$ of the system, $\phi=(x_1, x_2, \cdots, x_n)$:
\begin{equation}\left\{\begin{array} {l}
\Sigma_{j\sim i} z_{ij}\in \mu d_i Sgn (x_i),\,\, i=1, \cdots, n,\\
z_{ij}\in Sgn(x_i-x_j), \\
z_{ji}=-z_{ij}.
\end{array}\right.
\end{equation}

In the following, $\mu$ is called an eigenvalue with eigenvector $\phi$. The set of all eigenvalues is called the spectrum of $\Delta_1.$

Note that the coefficients $z_{ij}=z_{ij}(x)$ depend upon the point $x$, they are called the adjacent coefficients.
\vskip 0.5cm

We expect that an eigenvector $\phi$ of the system $(1.8)$ is a critical point of the following energy function:
\begin{equation}
 I(x)=\Sigma^n_{i=1}\Sigma_{j\sim i} |x_i- x_j|,
\end{equation}
under the constraint: $x\in X,$ and the eigenvalue $\mu$ is the value of the energy function $I$ at $\phi$.

\vskip 0.5cm
In the system (1.8), the set valued function $Sgn(t)$ is involved. Not like in the linear spectral graph theory, which has rich mathematical tools at hand, in attacking the challengeable problem $(1.8)$, new methods are appealed.
\vskip 0.5cm

After testing few examples, we find that a huge set of solutions for the system (1.8) exists. In most cases, the solution sets appear in cells. The following are  natural questions:

1. How to approach these eigenpairs?

2. How to count the multiplicity of these solutions?

3. Do these eigenpairs reflect the feature of graphs?

4. What are the advantageous aspects of this theory?

\vskip 0.5cm

In this paper, a nodal domain decomposition technique in combining with methods in nonlinear analysis is developed in dealing with the nonlinear system (1.8).
\vskip 0.5cm
First, a nodal domain decomposition is introduced (Definition 3.2). By which the structure of solutions is studied. We prove all eigenvectors with respect to an eigenvalue, either being single points, or being cells with various dimensions (Theorem 3.7 and Corollary 3.8). As a result, the center of gravity can be seen as a representative of the cell, it is called a normalized solution. After multiplying a constant, the coordinates of normalized solutions are $1, 0,-1$. Moreover, the number of nodal domains of an eigenvector is estimated by the dimension of the cell.
\vskip 0.5cm
Second, as we mentioned earlier, in the linear theory, (1.3) is the variational equation for the Dirichlet function $J$ under the constraint $(1.5)$, while the system (1.8) is the variational equation for the function $I$ on $X.$ The nonsmooth setting of Liusternik-Schnirelmann theorem in the critical point theory is applied to the study of the multiplicity of solutions of $(1.8)$. Counting multiplicity (in the sense of genus), the number of eigenvalues of a graph $G$ with $n$ vertices is at least $n$ (see Theorem 4.13). This is the counterpart of the multiplicity result for the linear spectral theory.
\vskip 0.5cm

Third, many results in the linear spectral graph theory are extended to the nonlinear setting. e.g.,

1. The spectrum of $\Delta_1$ is in $ [0,1]$ (Theorem 2.8). Sufficient conditions on graphs with eigenvalues $\mu\in (0, 1)$ are given (Theorem 5.2).

2. A graph $G$ is connected if and only if the eigenvalue $0$ is of multiplicity 1. Moreover the multiplicity of eigenvalue $0$ equals to the  number of components of $G$ (Corollary 5.5).

3. Estimates for the first non-zero eigenvalue are studied in Theorem 5.13.

4. In the linear theory, the first non-zero eigenvalue is estimated via Rayleigh quotient, while in the nonlinear theory, a characterization of the mountain pass point plays a similar role (Theorem 5.12).

5. The spectrum as well as the normalized eigenvectors of some special graphs, e.g., paths, cycles, complete graphs, have been computed in section 6.

\vskip 0.5cm

Finally, it seems too early to say which are the advantageous aspects of the nonlinear theory. However, Theorem 5.15 reveals the fact that Cheeger's constant for a connected graph $G$ is exactly the second eigenvalue of $\Delta_1(G)$, but in the linear spectral theory, only some upper and lower bounds for Cheeger's constant can be given. The evidence sheds light on graphic feature of nonlinear eigenvalues.

\vskip 0.5cm

The paper is organized in six sections. The first section is an introduction. In the second section, the definition of $\Delta_1$ and its eigenvalue problem are introduced. A few elementary examples are provided, by which, readers may get a feeling on the nonlinear eigenvalue problem. Basic properties of the nonlinear spectrum are studied. The nodal domain decomposition for vectors on graphs and the structure of eigenvectors are obtained in the third section.. The fourth section is devoted to the critical point theory in the nonsmooth setting. The Liusternik Schnirelmann Theorem is extended. A crucial step in the application to our eigenvalue problem is Theorem 4.11. Basic results on eigenvalues are studied in Section 5, the characterization of the mountain pass point on graphs with applications are studied as well. The eigenvalues and eigenvectors for several elementary graphs are presented in Section 6.

\section{The $1-$ Laplacian on graphs }

\subsection{The definition}

A vector $x=(x_1, \cdots, x_n)\in \mathbb{R}^n$ can be seen as a function defined on $V$, $x: V\to \mathbb{R}^1$. The energy function associate to the incidence matrix $B$ is defined by
$$ I(x)= \sum_{e\in E} |(Bx)_e|=\sum_{e\in E} |\sum_{i=1}^n b_{ei} x_i|.$$

The subdifferential of the convex function $t\to |t|$ is the set valued function
$$\partial |t|=Sgn (t).$$

The following theorem holds:
\begin{theorem}$\forall\, x\in \mathbb{R}^n,$
\begin{equation}
u\in \partial I(x)\,\,\Leftrightarrow\,\, \exists\, z: E\to \mathbb{R}^1
\end{equation}
such that
\begin{enumerate}
\item $u=B^T z$,
\item $z_e(Bx)_e=|(Bx)_e|\,\,\,\,\,\forall\, e\in E.$
\end{enumerate}
\end{theorem}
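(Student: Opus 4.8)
The plan is to treat $I$ as a finite sum of compositions of the absolute value with the rows of $B$, and to read off its subdifferential from convex calculus. Write $I(x)=\sum_{e\in E}g_e(x)$ with $g_e(x)=|(Bx)_e|$ and $(Bx)_e=\sum_i b_{ei}x_i$ a linear functional of $x$; each $g_e$, and hence $I$, is finite and convex on all of $\mathbb{R}^n$. Before starting I would record that condition (2) is the assertion $z_e\in\Sgn((Bx)_e)$ \emph{together with} the bound $|z_e|\le 1$: on edges with $(Bx)_e\neq 0$ equation (2) already forces $z_e=\sgn((Bx)_e)$, but on edges with $(Bx)_e=0$ it imposes nothing, so the boundedness $|z_e|\le 1$ must be read in (and is indeed needed below).

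For the easy implication ($\Leftarrow$), I verify the subgradient inequality by hand. Given $z$ with $u=B^Tz$ and $z_e\in\Sgn((Bx)_e)$, for any $y\in\mathbb{R}^n$ one has
\begin{equation*}
\langle u,y-x\rangle=\langle z,B(y-x)\rangle=\sum_{e\in E}z_e(By)_e-\sum_{e\in E}z_e(Bx)_e.
\end{equation*}
By (2) the second sum is $\sum_e|(Bx)_e|=I(x)$, and $|z_e|\le 1$ yields $\sum_e z_e(By)_e\le\sum_e|(By)_e|=I(y)$; thus $\langle u,y-x\rangle\le I(y)-I(x)$ for all $y$, which is exactly $u\in\partial I(x)$. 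The bound $|z_e|\le 1$ is precisely what makes the middle inequality work.

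For the converse ($\Rightarrow$), which is the substantive part, I would use the Moreau--Rockafellar sum rule $\partial I(x)=\sum_{e\in E}\partial g_e(x)$, valid with no constraint qualification because the $g_e$ are everywhere finite and continuous, combined with the chain rule for composition with a linear map, $\partial g_e(x)=B_e^T\,\partial|\cdot|((Bx)_e)=B_e^T\,\Sgn((Bx)_e)$, where $B_e$ denotes the $e$-th row of $B$. The chain rule likewise needs no surjectivity of $B_e$, again because $|\cdot|$ is finite everywhere. These two rules decompose an arbitrary $u\in\partial I(x)$ as $u=\sum_e z_eB_e^T=B^Tz$ with $z_e\in\Sgn((Bx)_e)$, giving (1) and (2). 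A self-contained alternative is to compute the support function of the compact convex set $\{B^Tz:z_e\in\Sgn((Bx)_e)\}$ and match it, in each direction $h$, with the directional derivative $I'(x;h)=\sum_{(Bx)_e\neq 0}\sgn((Bx)_e)(Bh)_e+\sum_{(Bx)_e=0}|(Bh)_e|$; two closed convex sets with equal support functions coincide, and the support function of $\partial I(x)$ is $I'(x;\cdot)$.

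The main obstacle lies entirely in ($\Rightarrow$): ensuring that the sum and chain rules are applied with the correct (here trivial) constraint qualifications, so that the decomposition $u=B^Tz$ is available for \emph{every} subgradient and not merely that every such $B^Tz$ lands in $\partial I(x)$, and checking that the values $z_e\in[-1,1]$ on edges with $(Bx)_e=0$ are genuinely realized. Both points hinge on the finiteness and continuity of the $g_e$, which hold here, so the difficulty is one of careful justification rather than of a missing idea.
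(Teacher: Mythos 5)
Your proof is correct and follows essentially the same route as the paper: the substantive direction $\Rightarrow$ decomposes $\partial I(x)$ by the sum rule for subdifferentials of the everywhere-finite convex summands $g_e(x)=|(Bx)_e|$ and identifies each $\partial g_e(x)$ with $B_e^T\,\Sgn((Bx)_e)$, exactly as in the paper's proof (which computes $\partial I_e$ coordinate-wise rather than citing the chain rule), while your direct verification of the subgradient inequality for $\Leftarrow$ is a harmless variant of the paper's appeal to $u^e\in\partial I_e(x)$. Your observation that condition (2) must be read as including $|z_e|\le 1$ on edges with $(Bx)_e=0$ --- without which the $\Leftarrow$ direction literally fails --- is a genuine point that the paper's proof silently assumes when it writes ``from (2), we have $z_e\in \Sgn(Bx)_e$''.
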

\begin{proof}
$"\Rightarrow"$ $\forall\, e\in E,$ let
$$I_e(x)=|(Bx)_e|.$$
It is a convex function with subdifferential
$$\partial I_e(x)=\{\partial_k I_e(x)\,\,|\, 1\le k\le n \},$$
where
\begin{equation*}\partial_k I_e(x)= \left\{\begin{array} {l}
Sgn(Bx)_e \,\,\,\,\,\,\,\mbox{if}\,\, k= e_h,\\
-Sgn(Bx)_e \,\,\,\,\,\,\,\mbox{if}\,\, k= e_t,\\
0 \,\,\,\,\,\,\,\mbox{if}\,\, k\notin e.
\end{array}\right.
\end{equation*}
and
$$ \partial_i I_e(x)=-\partial_j I_e(x),\,\,\mbox{if}\,e=(i,j).$$
$\forall\, u\in \partial I(x)$, due to the additivity of the subdifferential of a convex function, there exist $u^e\in \partial I_e(x)$ such that
$$ u=\sum_{e\in E} u^e, $$
where $u^e=(u^e_1, u^e_2, \cdots, u^e_k)$ and
$$ u^e_k=\frac{\partial |x_i-x_j|}{\partial x_k}.$$
Therefore
$$ u^e_k=0\,\,\,\,\mbox{if}\,\, k\notin e.$$
If $e=(i,j)$, then
\begin{equation}
 u^e_i=-u^e_j,
\end{equation}
and
\begin{equation*}u^e_i= \left\{\begin{array} {l}
1\,\,\,\,\,\,\,\mbox{if}\,\,x_i>x_j,\\
-1 \,\,\,\,\,\,\,\mbox{if}\,\,x_i<x_j, \\
c \,\,\,\,\,\,\,\mbox{if}\,\, x_i=x_j,
\end{array}\right.
\end{equation*}
for some $c, |c|\le 1$.

Let us define
\begin{equation*}z_e= \left\{\begin{array} {l}
1\,\,\,\,\,\,\,\mbox{if}\,\,x_i>x_j,\\
-1 \,\,\,\,\,\,\,\mbox{if}\,\,x_i<x_j, \\
c \,\,\,\,\,\,\,\mbox{if}\,\,x_i=x_j.
\end{array}\right.
\end{equation*}
Then
$$ u^e_k=b_{ek} z_e,$$
and then
$$ u_k=\sum_{e\in E} u^e_k=\sum_{e\in E}b_{ek} z_e,$$
i.e.,
$$ u=B^T z.$$
This is $(1)$. Also from the definition
$$z_e(Bx)_e=|(Bx)_e|\,\,\,\,\forall\, e\in E.$$
This is $(2)$.

$"\Leftarrow"$ Conversely, from $(2)$, we have
$$ z_e\in Sgn (Bx)_e,$$
and then
$$ u^e=(B^Tz)_e \in \partial I_e(x).$$
From $(1)$, we obtain
$$ u=B^T z=\sum_{e\in E} u^e \in \partial I(x).$$
\end{proof}

The following Euler identity holds:
\begin{corollary} If $u\in \partial I(x)$, then
$ (u, x)=I(x).$
\end{corollary}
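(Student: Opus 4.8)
The plan is to invoke Theorem 2.1 directly and then unwind the Euclidean inner product through the transpose relation between $B$ and $B^T$. Since $u\in\partial I(x)$, Theorem 2.1 supplies a function $z:E\to\mathbb{R}^1$ satisfying condition $(1)$, namely $u=B^Tz$, together with the pointwise alignment condition $(2)$, namely $z_e(Bx)_e=|(Bx)_e|$ for every $e\in E$. These two facts are all the analytic input the proof needs.

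First I would rewrite $(u,x)$ by substituting $u=B^Tz$ and using the adjoint identity $(B^Tz,x)=(z,Bx)$, which holds simply because $B^T$ is the transpose of $B$. This converts the left-hand side into a sum over edges:
\begin{equation*}
(u,x)=(B^Tz,x)=(z,Bx)=\sum_{e\in E} z_e (Bx)_e.
\end{equation*}
Next I would apply condition $(2)$ of Theorem 2.1 term by term, replacing each summand $z_e(Bx)_e$ by $|(Bx)_e|$, which yields
\begin{equation*}
(u,x)=\sum_{e\in E}|(Bx)_e|=I(x)
\end{equation*}
by the very definition of $I$. This closes the argument.

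There is essentially no serious obstacle here: the statement is the standard Euler identity for the positively $1$-homogeneous convex function $I$, and all the real content has already been packaged into Theorem 2.1, whose characterization of $\partial I(x)$ does the work. The only point requiring any care is the bookkeeping of the transpose relation $(B^Tz,x)=(z,Bx)$, but this is immediate from the definitions of the incidence matrix $B$ and its transpose, so the proof reduces to a one-line computation.
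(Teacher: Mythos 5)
Your proof is correct and follows exactly the paper's own argument: substitute $u=B^Tz$ from Theorem 2.1, move $B^T$ across the inner product to get $(z,Bx)=\sum_{e\in E}z_e(Bx)_e$, and apply condition $(2)$ termwise to recover $\sum_{e\in E}|(Bx)_e|=I(x)$. Nothing further is needed.
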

\begin{proof}
$$ (u, x)=(B^T z, x)=(z, B x)=\sum_{e\in E}(z_e, (Bx)_e)=\sum_{e\in E}|(Bx)_e|=I(x).$$
\end{proof}

\begin{definition} Let $G=(V, E).$ The set valued map
$$ \Delta_1: x\to \{B^T z \,|\,z: E\to \mathbb{R}^1 \mbox{is an}\, \mathbb{R}^m \,\mbox{vector, satisfying}\,  z_e(Bx)_e=|(Bx)_e|\,\forall\, e\in E\} $$
is called the $1-$ Laplacian on the graph $G$.
\end{definition}
It is rewritten as
$$\Delta_1 x=B^T  Sgn(Bx).$$
where $Sgn$ is defined in (1.7). Obviously, $\Delta_1: \mathbb{R}^n\to (2^{\mathbb{R}})^n$ is a nonlinear set valued mapping, which is independent to the special choice of orientation.

Thus, in computations, we shall always fix an orientation, and write
$$ -e=(j, i)\,\,\,\mbox{if}\,\,\,\,e=(i, j).$$
Following $(2.2),$
$$ z_e=-z_{-e},\,\,\,\,\,\,i.e., \,\,\,\,\,z_{ij}=-z_{ji}.$$
under this orientation we write the operator in the coordinate form:
\begin{align*}
&(\Delta_1 x)_i=(B^T Sgn(Bx))_i\\
&=\{\Sigma_{j\sim i} z_{ij}(x)\,|\,z_{ij}(x)\in Sgn(x_i-x_j),
z_{ji}(x)=-z_{ij}(x)\,\,\,\,\,\forall \,i\sim j\},\\
& i=1,2, \cdots, n.
\end{align*}

\begin{definition} $(\mu, x)\in \mathbb{R}^1\times X$ is called an eigen-pair of the $1-$ Laplacian  $\Delta$ on $G=(V, E)$, if
\begin{equation}
\mu D Sgn(x)\bigcap \Delta_1 x\neq \emptyset,
\end{equation}
where $D=diag\{d_1, \cdots, d_n\}$.

The set of all solutions of (2.3) is denoted by $S=S(G)$.
\end{definition}

In the coordinate form, $(2.3)$ is the system: $ z_{ij}(x)\in Sgn(x_i-x_j)$ satisfying $z_{ji}(x)=-z_{ij}(x),\,\forall\,i\sim j$, and
\begin{equation}
\Sigma_{j\sim i} z_{ij}(x)\in \mu d_i Sgn (x_i),\,\, i=1, \cdots, n,
\end{equation}

This is exactly (1.8). Following Corollary 2.2, we have
\begin{corollary} If $(\mu, x)$ is an eigenpair, then
$$ I(x)=\mu.$$
\end{corollary}
\begin{proof} Let $u= B^T z$,  we have
$$ I(x)=(u, x)=(B^T z, x)= \mu\Sigma^n_{i=1}d_i x_i sgn(x_i)=\mu\Sigma^n_{i=1}d_i |x_i|=\mu.$$
where
\begin{equation} sgn(t)= \left\{\begin{array} {l}
1 \,\,\,\,\,\,\,\mbox{if}\,\,  t>0,\\
-1 \,\,\,\,\,\,\,\mbox{if} \,\, t<0,\\
0 \,\,\,\,\,\,\,\mbox{if}\,\,  t=0,
\end{array}\right.
\end{equation}
\end{proof}

\subsection{Examples}
We present here few examples to illustrate the solutions of the above system. More examples will be given in section 6.
\vskip 0.5cm
$\mbox{Example 1.}$ $G=(V, E)$, where $V=\{1,2\}$, and $E=\{e=(1,2)\}$. The system reads as
\begin{equation*}\left\{\begin{array} {l}
 z_{12}\in \mu  Sgn (x_1),\\
 -z_{12}\in \mu Sgn (x_2),\\
z_{12}\in Sgn(x_1-x_2).
\end{array}\right.
\end{equation*}
Obviously we have two pairs of solutions:
\begin{equation*}\left\{\begin{array} {l}
\mu_1=0,\,\,(x_1, x_2)=\pm 1/2 (1,1),\,\,z_{12}=0,\\
\mu_2=1,\,\,(x_1, x_2)=\pm 1/2 (1,-1),\,\,z_{12}=1,.
\end{array}\right.
\end{equation*}
In fact, all solutions corresponding to $\mu_2$ are
$(x_1, x_2)=\pm (t, 1-t)\,\,\,\forall\, t\in[0,1].$

\vskip 0.5cm

$\mbox{Example 2.}$ $G=(V, E)$, where $V=\{1,2, 3\}$, and $E=\{e_1=(1,2), e_2=(2,3), e_3=(3,1)\}$. The system reads as
\begin{equation*}\left\{\begin{array} {l}
z_{12}-z_{31}\in 2\mu  Sgn (x_1),\\
z_{23}-z_{12}\in 2\mu  Sgn (x_2),\\
z_{31}-z_{23}\in 2\mu  Sgn (x_3),\\
z_{ij}\in Sgn(x_i-x_j),\,\,i, j=1,2,3.
\end{array}\right.
\end{equation*}
We find the following pairs of solutions:
\begin{equation*}\left\{\begin{array} {l}
\mu_1=0,\,\,\triangle_0=\pm \frac{1}{6}(1,1,1),\,\,z_{12}=z_{23}=z_{31}=0,\\
\mu_2=\mu_3=1,\,\,\pm l_1=\{\pm \frac{1}{2} (t,-1+t, 0,)\,\,\,t\in [0,1]\},\\
\pm l_2=\{\pm \frac{1}{2} (t,0, -1+t)\,\,\,t\in [0,1]\},\\
\pm l_3=\{\pm \frac{1}{2} (0,t, -1+t)\,\,\,t\in [0,1]\},\,\,z_{12}=1,z_{23}=-1,z_{31}=-1.
\end{array}\right.
\end{equation*}
The union of the last six segments consist of a cycle: $l_1\circ -l_3 \circ -l_2 \circ -l_1 \circ l_3 \circ l_2.$

\vskip 0.5cm
$\mbox{Example 3.}$ $G=(V, E)$, where $V=\{1,2, 3, 4\}$, and $E=\{e_1=(1,2),  e_2=(2,3),  e_3=(3,4)\}$. The system reads as
\begin{equation*}\left\{\begin{array} {l}
z_{12}\in \mu  Sgn (x_1),\\
z_{23}-z_{12}\in 2\mu Sgn (x_2),\\
z_{34}-z_{23}\in 2\mu Sgn (x_3),\\
-z_{34}\in \mu Sgn (x_4),\\
z_{ij}\in Sgn(x_i-x_j)\,\,i, j=1,2,3,4.
\end{array}\right.
\end{equation*}

We find the following pairs of solutions:
\begin{equation*}\left\{\begin{array} {l}
\mu_1=0,\,\,\triangle_0=\pm \frac{1}{6}(1,1,1,1),\,\,z_{12}=z_{23}=z_{34}=0,\\
\mu_2=\frac{1}{3},\,\,\pm \triangle^1_1=\{\pm (t, t, -(\frac{1}{3}-t), -(\frac{1}{3}-t))\,\,|\,t\in [0,\frac{1}{3}]\},\\
\mu_3=\mu_4=1,\,\,\triangle^2_1=\{\pm(t,0,0,(1-t))\,\,\,t\in [0,1]\},\,\,z_{12}=\frac{1}{3}, z_{23}=1, z_{34}=\frac{1}{3},\\
\triangle_2^1=\{\pm (t_1,-\frac{1}{2}(1-t_1-t_2),0,t_2)\,\,|\, t_1+t_2\le 1, t_1, t_2\ge 0\},\,\,z_{12}=1, z_{23}=1, z_{34}=1,\\
\triangle_2^2=\{\pm (t_1,0,-\frac{1}{2}(1-t_1-t_2),t_2)\,\,|\, t_1+t_2\le 1, t_1, t_2\ge 0\},\,\,z_{12}=1, z_{23}=-1, z_{34}=-1,\\
\triangle_3=\{ \pm(t_1,\frac{-t_2}{2}, \frac{t_3}{2}, -(1-t_1-t_2-t_3))\,\,|\, t_1+t_2+t_3\le 1, t_1, t_2, t_3\ge 0\}\\
z_{12}=1, z_{23}=-1, z_{34}=1.
\end{array}\right.
\end{equation*}

It is interesting to note:
\vskip 0.5cm
(1) All the solution sets are closed cells.
\vskip 0.5cm
(2) For the eigenvalue: $\mu=1$, i.e., $\mu_3$ and $\mu_4$, the associate critical set consists of a cycle. In fact, let $l_1=\Delta_1^2$, and $l_2=\{(-t, 0,0, (1-t))\,|\, t\in [0,1]\},$  then $l_1 \circ l_2\circ (-l_1) \circ (-l_2)$ is a cycle, while $l_2\subset \Delta_2^1\cap \Delta^2_2.$

\vskip 0.5cm

$\mbox{Example 4.}$ $G=(V, E)$, where $V=\{1,2, 3, 4\}$, and $E=\{e_1=(1,2), e_2=(2,3), e_3=(3,4), e_4=(4,1)\}$. The system reads as
\begin{equation*}\left\{\begin{array} {l}
z_{12}-z_{41}\in 2\mu  Sgn (x_1),\\
z_{23}-z_{12}\in 2\mu Sgn (x_2),\\
z_{34}-z_{23}\in 2\mu Sgn (x_3),\\
z_{41}-z_{34}\in 2\mu Sgn (x_4),\\
z_{ij}\in Sgn(x_i-x_j),\,\, z_{ij}=-z_{ji}\,\,i, j=1,2,3,4.
\end{array}\right.
\end{equation*}
We have four pairs of solutions:
\begin{equation*}\left\{\begin{array} {l}
\mu_1=0,\,\,(x_1, x_2, x_3, x_4)=\pm 1/8 (1,1,1,1),\\
\mu_2=\frac{1}{2},\,\,(x_1, x_2, x_3, x_4)=\pm 1/8 (1,1,-1,-1),\\
\mu_3=1,\,\,(x_1, x_2, x_3, x_4)=\pm 1/4 (1,0,-1,0),\\
\mu_4=1,\,\,(x_1, x_2, x_3, x_4)=\pm 1/8 (1,-1,1,-1).\\
\end{array}\right.
\end{equation*}

$\mbox{Example 5.}$ $G=(V, E)$, where $V=\{1,2, 3, 4, 5\}$, and $E=\{e_1=(1,4), e_2=(1,5), e_3=(4,5), e_4=(2,5), e_5=(3,5)\}$. The system reads as
\begin{equation*}\left\{\begin{array} {l}
z_{14}+z_{15}\in 2\mu  Sgn (x_1),\\
z_{25}\in \mu Sgn (x_2),\\
z_{35}\in \mu Sgn (x_3),\\
z_{41}+z_{45}\in 2\mu Sgn (x_4),\\
z_{51}+z_{52}+z_{53}+z_{54}\in 4\mu Sgn (x_5),\\
z_{ij}\in Sgn(x_i-x_j),\,z_{ij}=-z_{ji}\,\,i, j=1,2,3,4,5.
\end{array}\right.
\end{equation*}
We have five pairs of solutions:
\begin{equation*}\left\{\begin{array} {l}
\mu_1=0,\,\,(x_1, x_2, x_3, x_4, x_5)=\pm 1/10 (1,1,1,1,1),\\
\mu_2=\frac{1}{2},\,\,(x_1, x_2, x_3, x_4, x_5)=\pm 1/4 (1,0,0,1,0),\\
\mu_3=1,\,\,(x_1, x_2, x_3, x_4, x_5)=\pm 1/4 (1,-1,-1,0,0),\\
\mu_4=1,\,\,(x_1, x_2, x_3, x_4, x_5)=\pm 1/6 (1,1,-1,-1,0),\\
\mu_5=1,\,\,(x_1, x_2, x_3, x_4, x_5)=\pm 1/6 (1,-1,1,-1,0).
\end{array}\right.
\end{equation*}

\subsection{Basic facts}

We study the basic propositions of the eigenvalues for $1-$ Laplacian $\Delta_1$ on graphs. Let $G=(V, E)$ be a graph and let $(\mu, x)\in\mathbb{R}^1\times \mathbb{R}^n$ be an eigenpair for $\Delta_1(G)$

\begin{theorem}  If $\mu \neq 0,$ then $0\in \Sigma^n_{i=1} d_i Sgn(x_i).$
\end{theorem}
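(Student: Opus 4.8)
The plan is to sum the $n$ coordinate equations of the eigensystem (2.4) and to exploit the prescribed antisymmetry $z_{ji}=-z_{ij}$ of the adjacent coefficients, which makes the left-hand side collapse to zero; everything then follows from dividing by $\mu$.

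First I would unwind the set-valued inclusion into genuine scalar identities. Since $(\mu,x)$ is an eigenpair, there are fixed real coefficients $z_{ij}=z_{ij}(x)$ with $z_{ij}\in \Sgn(x_i-x_j)$ and $z_{ji}=-z_{ij}$, and for each $i$ a value $s_i\in \Sgn(x_i)$ such that
$$\sum_{j\sim i} z_{ij}=\mu d_i s_i,\qquad i=1,\ldots,n.$$

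Next I would add these $n$ identities. On the right we obtain $\mu\sum_{i=1}^n d_i s_i$. On the left we get $\sum_{i=1}^n\sum_{j\sim i} z_{ij}$, and the key observation is that every unordered edge $\{i,j\}$ contributes exactly the two terms $z_{ij}$ (from the equation at vertex $i$) and $z_{ji}$ (from the equation at vertex $j$), whose sum is $z_{ij}+z_{ji}=0$ by antisymmetry. Hence the whole left-hand side vanishes and we arrive at $0=\mu\sum_{i=1}^n d_i s_i$.

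Finally, because $\mu\neq 0$ I can divide to conclude $\sum_{i=1}^n d_i s_i=0$. Since each $s_i\in \Sgn(x_i)$, the real number $\sum_{i=1}^n d_i s_i$ is by definition a member of the Minkowski sum $\sum_{i=1}^n d_i \Sgn(x_i)$, and we have exhibited it as equal to $0$; therefore $0\in\sum_{i=1}^n d_i \Sgn(x_i)$, as claimed. There is no genuine obstacle here: the only points that merit a sentence of care are the passage from the set-valued inclusion to an honest scalar equation (selecting the witnesses $s_i$) and the elementary fact that $0\in\mu A$ forces $0\in A$ once $\mu\neq 0$.
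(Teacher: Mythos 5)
Your argument is exactly the paper's: sum the $n$ coordinate relations of the eigensystem, observe that the antisymmetry $z_{ji}=-z_{ij}$ forces $\sum_{i=1}^n\sum_{j\sim i}z_{ij}=0$, and divide by $\mu\neq 0$. You merely spell out the selection of witnesses $s_i\in\Sgn(x_i)$ more explicitly than the paper does; the proof is correct and identical in substance.
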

\begin{proof} Look at the system:
$$ \Sigma_{j\in i} z_{ij} \in \mu d_i Sgn (x_i),\,\,\,\forall\, i=1, \cdots, n.$$
Since
$$ \Sigma_{i=1}^n \Sigma_{j \sim i} z_{ij}=0,$$
we obtain
$$ 0\in \mu \sum^n_{i=1} d_i Sgn(x_i).$$
\end{proof}
\vskip 0.5cm

\textbf{Remark} Hein and Buehler \cite{refHB} showed similar results like Theorem 2.1, Corollary 2.2, Corollary 2.5 and Theorem 2.6 for unweighted $L^1$ norm 1-Laplacian.
In the graph theoretic terminology Theorem 2.6 is equivalent to say that $x$ has to have weighted median zero (weighed $L^1$-norm).

\vskip 0.5cm

\begin{theorem}  If $x\in X,$ with $x_1=x_2= \cdots x_n,$ then $x=\frac{1}{\sum^n_{i=1} d_i}(1, 1, \cdots, 1)$ is an eigenvector with eigenvalue $\mu=0.$
Conversely, if $G$ is connected, then $\mu=0$ is an eigenvalue with eigenvector $x=(x_1, \cdots x_n)$ where $x_1=x_2= \cdots x_n.$
\end{theorem}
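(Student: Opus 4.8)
The plan is to prove the two implications separately, using the explicit eigenvalue system (2.4) for the forward direction and the Euler-type identity of Corollary 2.5 for the converse. Neither half requires the full critical-point machinery developed later; both reduce to elementary observations once Corollary 2.5 is in hand.

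For the forward direction I would start from $x\in X$ with $x_1=x_2=\cdots=x_n=:c$. The normalization $\sum_{i=1}^n d_i|x_i|=1$ forces $|c|\sum_{i=1}^n d_i=1$, hence $c=\pm(\sum_{i=1}^n d_i)^{-1}$ and $x=\pm(\sum_{i=1}^n d_i)^{-1}(1,\dots,1)$, the representative named in the statement. To exhibit $(0,x)$ as an eigenpair I would simply set every adjacent coefficient to zero: since $x_i-x_j=0$ for each edge $i\sim j$, we have $0\in Sgn(x_i-x_j)=[-1,1]$, so the choice $z_{ij}=0$ is admissible and trivially satisfies $z_{ji}=-z_{ij}$. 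Then $\sum_{j\sim i}z_{ij}=0$ for every $i$, while with $\mu=0$ the right-hand side of (2.4) is $0\cdot d_i Sgn(x_i)=\{0\}$ (the scalar $0$ collapses the nonempty set $Sgn(x_i)$ to $\{0\}$). The inclusion $0\in\{0\}$ holds, so $x$ is an eigenvector with eigenvalue $0$. I note that this direction uses no connectivity hypothesis.

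For the converse, suppose $G$ is connected and that $(0,x)$ is an eigenpair; the claim is that $x$ must then be constant (existence of the eigenvalue $0$ already follows from the forward part). Here I would invoke Corollary 2.5, which gives $I(x)=\mu=0$. Writing this out via (1.9),
$$ I(x)=\sum_{i=1}^n\sum_{j\sim i}|x_i-x_j|=0, $$
and since every summand is nonnegative, each must vanish: $x_i=x_j$ for every edge $i\sim j$. Connectivity then propagates this equality along paths — for any two vertices $p,q$ there is a path $p=v_0\sim v_1\sim\cdots\sim v_k=q$ along which $x_{v_0}=x_{v_1}=\cdots=x_{v_k}$ — so $x_1=x_2=\cdots=x_n$, as required. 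I do not anticipate a genuine obstacle here: the only points demanding care are the bookkeeping with the set-valued right-hand side in the forward step, and the recognition that connectivity enters solely in the converse, precisely to upgrade edge-wise equality of coordinates to global equality.
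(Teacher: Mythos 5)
Your proof is correct and takes essentially the same route as the paper: the converse is verbatim the paper's argument (Corollary 2.5 gives $I(x)=0$, hence $x_i=x_j$ on every edge, and connectivity propagates this equality along paths). In the forward direction you exhibit the admissible adjacent coefficients $z_{ij}=0$ directly, whereas the paper reaches the same conclusion slightly more indirectly by observing that the convex function $I$ attains its minimum at the constant vector, so $0\in\partial I(x)$; the two verifications are interchangeable.
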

\begin{proof}
$"\Rightarrow"$ First we prove: $x=\frac{1}{\sum^n_{i=1} d_i}(1, 1, \cdots, 1)$, is an eigenvector, i.e., it satisfies the system $(2.3)$ with $\mu=0,$
 or $0\in \Delta_1 x.$ In fact $I$ is a convex function on $\mathbb{R}^n$, it achieves its minimum at $\phi$, it follows
$$ 0\in \partial I(x).$$
Since
$$\partial_{x_i}I(x)=\sum_{j\sim i} Sgn (x_i-x_j),$$
This implies the existence of $z_{ij}\in Sgn (x_i-x_j)$ satisfying $(1.8)$ with $\mu=0.$

$"\Leftarrow"$ Since $G$ is connected, $\forall\, k, j\in V, \,\exists\,i_0, i_1,\cdots,i_l$ such that $k=i_0\sim i_1 \sim \cdots \sim i_l=j$. After Corollary 2.5, $I(x)=0$, it implies
$$ x_k=x_{i_1}=\cdots=x_{i_{l-1}} =x_j.$$
\end{proof}
\vskip 0.5cm

\begin{theorem} For any eigenvalue $\mu$ of $\Delta_1,$ we have
$$ 0\le \mu \le 1.$$
\end{theorem}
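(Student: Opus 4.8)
The lower bound $\mu \geq 0$ is immediate: by Corollary 2.5, any eigenpair satisfies $I(x) = \mu$, and since $I(x) = \sum_{i}\sum_{j\sim i}|x_i - x_j| \geq 0$ by definition, we get $\mu \geq 0$ at once. So the real content is the upper bound $\mu \leq 1$.

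For the upper bound, I would work directly from the energy $I(x) = \mu$ together with the constraint $x \in X$, i.e. $\sum_{i=1}^n d_i|x_i| = 1$. The idea is to estimate $I$ by a triangle inequality against the normalization. For each edge $e = (i,j)$ we have $|x_i - x_j| \leq |x_i| + |x_j|$, so summing over all edges,
\begin{equation*}
I(x) = \sum_{e=(i,j)\in E}|x_i - x_j| \leq \sum_{e=(i,j)\in E}(|x_i| + |x_j|).
\end{equation*}
Now the key combinatorial observation: in the sum on the right, each vertex $i$ contributes $|x_i|$ exactly once for every edge incident to it, and the number of such edges is precisely the degree $d_i$. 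Hence
\begin{equation*}
\sum_{e=(i,j)\in E}(|x_i| + |x_j|) = \sum_{i=1}^n d_i |x_i| = 1.
\end{equation*}
Combining these with $\mu = I(x)$ yields $\mu \leq 1$.

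The main obstacle is essentially bookkeeping: I must be careful that the double sum in the definition (1.9), $I(x) = \sum_{i=1}^n \sum_{j\sim i}|x_i - x_j|$, counts each edge twice, whereas the edge-sum $\sum_{e\in E}|(Bx)_e|$ counts it once. The two expressions for $I$ appearing in the paper (the edge form in Section 2.1 and the vertex-pair form in (1.9)) differ by exactly this factor, so I would fix once and for all which normalization of $I$ is in force and make the degree-counting argument match it; the constraint set $X$ in (1.7) uses the same weights $d_i$, so the cancellation is exact regardless of the convention chosen. No deeper machinery—no appeal to the subdifferential structure beyond Corollary 2.5—is needed, so this is the cleanest of the three bounds.
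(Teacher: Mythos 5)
Your proof is correct and follows essentially the same route as the paper: the identity $\mu=I(x)$ from Corollary 2.5, the edgewise triangle inequality $|x_i-x_j|\le|x_i|+|x_j|$, and the degree-count $\sum_{e=(i,j)\in E}(|x_i|+|x_j|)=\sum_i d_i|x_i|=1$. Your explicit remark about the factor-of-two discrepancy between the edge form of $I$ and the double-sum form (1.9) is a point the paper glosses over, but it does not change the argument.
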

\begin{proof}Since
$$ |x_k-x_j|\le |x_k|+|x_j|,$$
we have
$$0\le I(x)=\Sigma_{j\sim k}|x_k-x_j|\le \Sigma_{j\sim k}(|x_k|+|x_j|)\le \Sigma_{j=1}^n
d_j |x_j|=1.$$
According to Corollary 2.5, $\mu=I(x)$, the lemma is proved.
\end{proof}

\section{The structure of the solution set}

For a given function, one considers the subsets of its domain, on which it is $0$ or $+$ or $-$. The subsets are called nodal domains.

First, an eigenvector is regarded as a function, we rewrite it by nodal domains.

Next we introduce the notion of cells. A cell $\Delta$ is a simplex $\{x=\Sigma^r_{k=1} \lambda_k \phi_k\,|\,\Sigma^r_{k=1}\lambda_k=1\, \lambda_k\ge 0, k=1, \cdots, r.  \}$, where $ \{\phi_1, \cdots, \phi_r\}$ are linearly independent vectors. The dimension of the cell is $r-1$. we shall prove that the set $S(G)$ of all eigenvectors consists of a set of cells.

Finally, we take the center of gravity of a cell in $S(G)$ as a normalized eigenvector.

\subsection{Nodal domains}

Given a graph $G=(V, E)$, for a vector $x=(x_1, \cdots, x_n)\in \mathbb{R}^n$, according to the signatures of $x_i$, we classify the vertices into three groups. Let
$$ D^0=\{i\in V\,|\, x_i=0 \}, D^{\pm}=\{i\in V\,|\, \pm x_i>0 \}.$$
We call $D^0$ the null set of $x$, and the vertex set of a connected component of the subgraph induced by $D^{\pm}$ is called a $\pm$ nodal domain. Accordingly, we divide $V$ into $r^+ + r^- +1$ disjoint $\pm$ nodal domains with the null set:
$$ V=\bigcup_{\alpha=1}^{r^+} D^+_\alpha \cup \bigcup_{\beta=1}^{r^-} D^-_{\beta} \cup D^0.$$
where $D^{\pm}_\gamma$ is a $\pm$ nodal domain, and $r^{\pm}$ is the number of $\pm$ nodal domains.

The adjacent relations connecting nodal domains and the nodal set can be summarized as the following equivalent statements:

\begin{enumerate}
\item $D^+_\alpha$ and $D^+_\beta$ have no connections $\forall\,\alpha\neq \beta$. The same is true if  $D^+$ is replaced by $D^-$,
\item If $j\sim i\in D^+_\alpha$ and $j\notin D^+_\alpha$, then $j\in \bigcup_{\beta=1}^{r^-}D^-_\beta \cup D^0$. The same is true if $i\in D^-_\beta$ and $j\notin D^-_\beta$, then $ j\in \bigcup_{\alpha=1}^{r^+}D^+_\alpha \cup D^0$,
\item If $i\in D^+_\alpha$, then the $i$th summation $\Sigma_{j\sim i} z_{ij}$ depends only on the connections inside $D^+_\alpha$ and those connections to $\bigcup_{\beta=1}^{r^-}D^-_\beta \cup D^0$.
\end{enumerate}

\vskip 0.5cm

\begin{definition} The number $r=r^+ +r^-$ is called the number of nodal domains of $x$.
\end{definition}

Let $\{\textbf{e}_1, \textbf{e}_2, \cdots, \textbf{e}_n\} $ be the Cartesian basis of $\mathbb{R}^n, \forall\, x\in X,$ let
$$ \textrm{E}^{\pm}_\gamma=\textrm{E}^{\pm}_\gamma(x)=\Sigma_{i\in D^{\pm}_\gamma} \textbf{e}_i,\,\,\,\delta^{\pm}_\gamma =\Sigma_{i\in D^{\pm}_\gamma} d_i,$$
$$\delta^{\pm}= \delta^{\pm}(x)=\Sigma \delta^{\pm}_\gamma(x),\,\,\,\,\delta^0=\delta^0(x)=d-\delta^+(x)-\delta^-(x).$$
\vskip 0.5cm
\begin{definition}
A vector $(x_1, x_2, \cdots, x_n)\in X$ is expressed according to its nodal domains as follow:
$$ x=(\Sigma^{r^+}_{\alpha=1}\Sigma_{i\in D^+_\alpha}-\Sigma^{r^-}_{\beta=1}\Sigma_{i\in D^-_\beta}) \xi_i \textbf{e}_i,$$
where $\xi_i=|x_i|,\,\forall\, i,$ and
$$ (\Sigma^{r^+}_{\alpha=1}\Sigma_{i\in D^+_\alpha}+\Sigma^{r^-}_{\beta=1}\Sigma_{i\in D^-_\beta}) \xi_i d_i=1.$$
It is called the nodal domain decomposition of $x$.
\end{definition}

Let $x\in S(G)$ be an eigenvector of $\Delta_1$.  According to Theorem 2.6, if $\mu\neq 0$, then
$$ 0\in (\Sigma^{r^+}_{\alpha=1}\Sigma_{i\in D^+_\alpha}+\Sigma^{r^-}_{\beta=1}\Sigma_{i\in D^-_\beta}+\Sigma_{i\in D^0}) m_i d_i,$$
where
\begin{equation*}m_i=m_i(x)=\left\{\begin{array} {l}
1,\,\,\,\,i\in \bigcup_{\alpha=1}^{r^+} D^+_\alpha,\\
-1,\,\,\,\,i\in \bigcup_{\beta=1}^{r^-} D^-_\beta,\\
c_i,\,\,\,\,\,i\in D^0.
\end{array}\right.
\end{equation*}
$|c_i|\le 1$, it follows
$$ |\delta^+-\delta^-|\le \delta^0.$$

The eigenpair system now is rewritten as
\begin{equation}
 \Sigma_{j\sim i}z_{ij}(x)= \mu m_i(x)d_i,\,\,\,\,\,\,\forall\,i.
\end{equation}

Let
$$Z^+_{\alpha\beta} =\Sigma_{(i,j)\in D^+_\alpha \times D^-_\beta} z_{ij},\,\, Z^-_{\beta\alpha} =\Sigma_{(i,j)\in D^-_\beta\times D^+_\alpha} z_{ij},$$
denote the numbers of edges between $D^+_\alpha$ and $D^-_\beta$, and let
$$ Z^\downarrow_\alpha=\Sigma_{(i,j)\in D^+_\alpha \times D^0} z_{ij},\,\,Z^\uparrow_\beta=\Sigma_{(i,j)\in D^-_\beta\times D^0} z_{ij},$$
denote the numbers of edges between $D^+_\alpha$ ($D^-_\beta$) and $D^0$ respectively. Let

$$Z^\uparrow_\alpha=\Sigma_{(i,j)\in D^0 \times D^+_\alpha } z_{ij},\,\,Z^\downarrow_\beta=\Sigma_{(i,j)\in D^0 \times D^-_\beta} z_{ij}.$$
From $z_{ij}=-z_{ji}$,  it follows
$$ Z^+_{\alpha\beta}=-Z^-_{\beta\alpha},$$
and
$$ Z^\downarrow_\alpha=-Z^\uparrow_\alpha,\,\,Z^\uparrow_\beta=-Z^\downarrow_\beta.$$

For any $D^{\pm}_\tau, $ let
\begin{equation}
 p^{\pm}_i= \Sigma_{j\sim i, j\notin D^{\pm}_\tau} z_{ij},\,\,\,\forall\, i\in D^{\pm}_\tau.
\end{equation}
They denote the numbers of edges between $i\in D^{\pm}_\tau$ and those vertices outside $D^{\pm}_\tau$.

The following equations relate vertices in different nodal domains:

\begin{align}
& \Sigma_{i\in D^+_\alpha} p^+_i= \Sigma_\beta Z^+_{\alpha\beta}+Z^\downarrow_\alpha.\\
& \Sigma_{i\in D^-_\beta} p^-_i= \Sigma_\alpha Z^-_{\beta\alpha}+Z^\uparrow_\beta.
\end{align}
By adding the following equations over $D^{\pm}_\tau$
$$ \Sigma_{j\sim i} z_{ij}= \pm \mu d_i,\,\,\forall i\in D^{\pm}_\tau,$$
we obtain:
\begin{equation}
\Sigma_{i\in D^{\pm}_\tau} p^{\pm}_i= \pm \mu \Sigma_{i\in D^{\pm}_\tau} d_i =\pm \mu \delta^{\pm}_\tau,
\end{equation}
provided by
\begin{equation}
\Sigma_{i \in D^{\pm}_\tau}\Sigma_{j\sim i, j\in D^{\pm}_\tau} z_{ij}= 0.
\end{equation}

Combining (3.3), (3.4) and (3.5), we have
\begin{align}
& \Sigma_\alpha(\Sigma_\beta Z^+_{\alpha\beta} +Z^\downarrow_\alpha)= \mu \delta^+,\\
& \Sigma_\beta(\Sigma_\alpha Z^-_{\beta\alpha} +Z^\uparrow_\beta)= -\mu \delta^-.
\end{align}
\vskip 0.5cm

\begin{lemma}
$$ \Sigma_{i\in D^{\pm}_\tau}\Sigma_{j\in D^{\pm}_\tau,\,j\sim i} |x_i-x_j|+\Sigma_{i\in D^{\pm}_\tau } p^{\pm}_i x_i=\mu\Sigma_{i\in D^{\pm}_\tau } d_i x_i,\,\,\,\,\forall\,\tau.$$
\end{lemma}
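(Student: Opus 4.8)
The plan is to derive the asserted identity from the eigenpair system by testing the equation at vertices inside a single nodal domain $D^{\pm}_\tau$ against the values $x_i$, and then splitting the resulting double sum into the intra-domain part and the cross-domain part. Concretely, for a fixed $\pm$ nodal domain $D^{\pm}_\tau$, I would start from the coordinate form of the eigenpair equation rewritten in $(3.1)$, namely $\Sigma_{j\sim i} z_{ij}(x)= \mu m_i(x)d_i$. Since every vertex $i\in D^{\pm}_\tau$ has $m_i(x)=\pm 1$, this reads $\Sigma_{j\sim i} z_{ij}= \pm\mu d_i$ for all $i\in D^{\pm}_\tau$, exactly the equations already used to obtain $(3.5)$.

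Next I would multiply the $i$th equation by $x_i$ and sum over $i\in D^{\pm}_\tau$, giving
$$\Sigma_{i\in D^{\pm}_\tau}\Sigma_{j\sim i} z_{ij} x_i = \pm\mu\Sigma_{i\in D^{\pm}_\tau} d_i x_i.$$
The right-hand side is already $\mu\Sigma_{i\in D^{\pm}_\tau} d_i x_i$ once one notes that on $D^+_\tau$ we have $x_i>0$ so the factor $+\mu$ is correct, and on $D^-_\tau$ we have $x_i<0$ so the sign works out identically; in either case the claim carries the plain $\mu$, which is what we want. The key manipulation is then to break the inner sum $\Sigma_{j\sim i} z_{ij}$ according to whether the neighbor $j$ lies inside $D^{\pm}_\tau$ or outside it, using the definition $p^{\pm}_i=\Sigma_{j\sim i,\, j\notin D^{\pm}_\tau} z_{ij}$ from $(3.2)$. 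This splits the left-hand side into the intra-domain term $\Sigma_{i\in D^{\pm}_\tau}\Sigma_{j\in D^{\pm}_\tau,\, j\sim i} z_{ij}\, x_i$ plus the cross term $\Sigma_{i\in D^{\pm}_\tau} p^{\pm}_i x_i$.

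The remaining and genuinely substantive step is to show the intra-domain term equals $\Sigma_{i\in D^{\pm}_\tau}\Sigma_{j\in D^{\pm}_\tau,\, j\sim i} |x_i-x_j|$. Here I would exploit the antisymmetry $z_{ij}=-z_{ji}$ together with the constraint $z_{ij}\in Sgn(x_i-x_j)$. Pairing each ordered edge $(i,j)$ with its reverse $(j,i)$ inside $D^{\pm}_\tau$, the contribution $z_{ij}x_i + z_{ji}x_j = z_{ij}(x_i-x_j)$; and since $z_{ij}\in Sgn(x_i-x_j)$ forces $z_{ij}(x_i-x_j)=|x_i-x_j|$ (the same identity used in condition $(2)$ of Theorem 2.1), each symmetric pair yields $|x_i-x_j|$. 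Summing over all such pairs reproduces $\Sigma_{i\in D^{\pm}_\tau}\Sigma_{j\in D^{\pm}_\tau,\, j\sim i} |x_i-x_j|$, completing the identity.

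The main obstacle I anticipate is bookkeeping around the double-counting in the symmetrization: the double sum $\Sigma_{i}\Sigma_{j\sim i}$ over an unordered edge set counts each edge twice, so I must be careful that pairing $(i,j)$ with $(j,i)$ and collapsing to $z_{ij}(x_i-x_j)=|x_i-x_j|$ produces precisely one copy of $|x_i-x_j|$ per directed pair, matching the index convention used in the statement. A secondary subtlety is confirming that restricting the neighbor sum to $j\in D^{\pm}_\tau$ is legitimate, i.e.\ that the edges among vertices of the same sign inside $\tau$ carry the $Sgn(x_i-x_j)$ relation and that the complementary edges are exactly those bundled into $p^{\pm}_i$ via $(3.2)$; this is where statement $(3)$ of the adjacency summary (that the $i$th sum depends only on connections inside the domain and to the complementary sets) does the structural work. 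Once the sign conventions and the edge-counting are pinned down, the rest is a direct substitution.
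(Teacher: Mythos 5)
Your overall route is exactly the paper's: test the equation $\Sigma_{j\sim i} z_{ij}=\mu m_i d_i$ against $x_i$, sum over $i\in D^{\pm}_\tau$, split the neighbor sum into the part inside $D^{\pm}_\tau$ and the part bundled into $p^{\pm}_i$ via $(3.2)$, and convert the intra-domain sum using $z_{ij}x_i+z_{ji}x_j=z_{ij}(x_i-x_j)=|x_i-x_j|$. The decomposition and the key identity are the same as in the paper.

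However, your justification of the sign on the right-hand side is not correct as reasoned. For $i\in D^-_\tau$ the tested equation is $\Sigma_{j\sim i}z_{ij}=-\mu d_i$, so multiplying by $x_i$ and summing yields $-\mu\Sigma_{i\in D^-_\tau}d_i x_i$, not $+\mu\Sigma_{i\in D^-_\tau}d_i x_i$; the fact that $x_i<0$ does not flip the coefficient, it only makes $d_i x_i$ negative. A sanity check on $P_2$ with $\mu=1$, $x=(1/2,-1/2)$, $D^-=\{2\}$: the left side of the lemma is $p^-_2 x_2=(-1)(-1/2)=1/2$, while $\mu d_2 x_2=-1/2$. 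So for negative domains the identity carries $-\mu$ (equivalently, $\mu\Sigma_{i\in D^{\pm}_\tau}d_i|x_i|$ works uniformly in both cases), and your claim that "the sign works out identically" papers over a real discrepancy --- one that, to be fair, is also present in the lemma as printed and in the paper's own proof, which writes $\Sigma_{j\sim i}z_{ij}=\mu d_i$ for all $i$ without the $m_i$ factor. Separately, the double-counting issue you raise is real and you do not actually resolve it: pairing $(i,j)$ with $(j,i)$ yields one copy of $|x_i-x_j|$ per \emph{undirected} edge, whereas $\Sigma_{i}\Sigma_{j\sim i,\,j\in D^{\pm}_\tau}|x_i-x_j|$ read over ordered pairs yields two, so the asserted equality holds only if the intra-domain absolute-value sum is interpreted as a sum over edges; the paper is silently inconsistent about this convention, and a careful write-up should fix it explicitly.
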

\begin{proof} Multiplying the following system
$$ \Sigma_{j\sim i} z_{ij} = \mu d_i,\,\,\,\,i=1, \cdots, n$$
by $x_i$, and adding them over $D^{\pm}_\tau,$ we obtain
$$ \Sigma_{i\in D^{\pm}_\tau } (\Sigma_{j\sim i, j\in D^{\pm}_\tau} +  \Sigma_{j\sim i, j\notin D^{\pm}_\tau}) z_{ij} x_i = \mu \Sigma_{i\in D^{\pm}_\tau} d_i x_i.$$
Following (3.2) and the relation:
$$ \Sigma_{i \in D^{\pm}_\tau} \Sigma_{j\sim i,\, j\in D^{\pm}_\tau} z_{ij} x_i= \Sigma_{i \in D^{\pm}_\tau}\Sigma_{j\sim i,\,j\in D^{\pm}_\tau} |x_i-x_j|,$$
the equation follows.
\end{proof}

\vskip 0.5cm
Similarly we have
\begin{lemma}
\begin{align}
& \Sigma_\alpha Z^\downarrow_\alpha= \Sigma_\beta Z^\uparrow_\beta.\\
& \Sigma_\alpha Z^\uparrow_\alpha= \Sigma_\beta Z^\downarrow_\beta.
\end{align}
\end{lemma}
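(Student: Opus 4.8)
The plan is to imitate the proof of the preceding lemma, the only differences being that no factor $x_i$ is inserted and that the summation is taken over the null set and over the sign-domains rather than over a single $D^{\pm}_\tau$. Concretely, I would start from the coordinate form of the eigen-system, $\Sigma_{j\sim i} z_{ij}=\mu m_i d_i$ for every $i$, and produce flux-balance relations by summing these equations over prescribed vertex sets and then reorganizing each inner sum $\Sigma_{j\sim i} z_{ij}$ according to whether the neighbour $j$ lies in $D^0$, in some $D^+_\alpha$, or in some $D^-_\beta$. The whole mechanism rests on the antisymmetry $z_{ij}=-z_{ji}$, which makes the edges internal to a fixed nodal set cancel in pairs; this is exactly the cancellation $\Sigma_{i\in D^{\pm}_\tau}\Sigma_{j\sim i,\,j\in D^{\pm}_\tau} z_{ij}=0$ already used earlier.

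For the concrete steps I would first sum the system over $i\in D^0$. Splitting the neighbour-sum, the part with $j\in D^0$ vanishes by the pairwise cancellation above, while the parts with $j\in D^+_\alpha$ and $j\in D^-_\beta$ are, by definition, exactly $\Sigma_\alpha Z^\uparrow_\alpha$ and $\Sigma_\beta Z^\downarrow_\beta$. I would read this balance against the two companion relations obtained by summing over $\bigcup_\alpha D^+_\alpha$ and over $\bigcup_\beta D^-_\beta$, which isolate the cross-fluxes $\Sigma_\alpha Z^\downarrow_\alpha$ and $\Sigma_\beta Z^\uparrow_\beta$ respectively. Finally I would invoke the already-recorded sign identities $Z^\downarrow_\alpha=-Z^\uparrow_\alpha$ and $Z^\uparrow_\beta=-Z^\downarrow_\beta$ to convert one of the two displayed equalities into the other, so that it suffices to establish a single flux-conservation statement: the net flux the positive domains exchange with the null set matches the net flux the negative domains exchange with it.

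The step I expect to be the main obstacle is the sign and multiplicity bookkeeping needed to isolate the individual cross-fluxes rather than only their aggregate. Summing over $D^0$ delivers the combined quantity $\Sigma_\alpha Z^\uparrow_\alpha+\Sigma_\beta Z^\downarrow_\beta$ directly, and separating it into the two equal halves claimed in the statement is the delicate point: each edge joining $D^0$ to a sign-domain must be counted once, with the orientation dictated by $z_{ij}\in\mathrm{Sgn}(x_i-x_j)$, and no interior edge may be double-counted. Here the constraint supplied by Theorem 2.6, namely $0\in\Sigma_{i=1}^n d_i\,\mathrm{Sgn}(x_i)$ and the resulting admissibility of the coefficients $c_i$ on $D^0$, is what must be brought in to pin the two sides against each other; verifying that this constraint is exactly strong enough to force the asserted equality, rather than merely the balance of the aggregated flux, is the crux of the argument.
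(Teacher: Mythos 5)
You have reconstructed the same route the paper takes: reduce the two identities to one via $Z^\downarrow_\alpha=-Z^\uparrow_\alpha$, $Z^\uparrow_\beta=-Z^\downarrow_\beta$, and then decompose a telescoping sum of the antisymmetric coefficients $z_{ij}$ according to whether the vertices lie in $D^0$ or in the sign domains. But your write-up stops exactly at the step you yourself flag as the crux, and that step cannot be carried out. Summing the eigen-system over $i\in D^0$ and cancelling the internal $D^0$-edges gives
\[
\Sigma_\alpha Z^\uparrow_\alpha+\Sigma_\beta Z^\downarrow_\beta=\mu\,\Sigma_{i\in D^0}c_i d_i=\mu(\delta^--\delta^+),
\]
(the last equality by summing the full system over all $i$ and using antisymmetry). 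This is not zero in general, and even when it is zero it constrains only the \emph{sum} of the two cross-fluxes, never their separate values; Theorem 2.6 supplies nothing further. Worse, the identity as stated cannot be rescued: for $i\in D^+_\alpha$, $j\in D^0$ one has $z_{ij}\in Sgn(x_i-0)=\{1\}$, and for $i\in D^-_\beta$, $j\in D^0$ one has $z_{ij}=-1$, so the first equation asserts $|E(D^+,D^0)|=-|E(D^-,D^0)|$, which would force both sides to vanish. The paper's own Example~3 gives a concrete violation: the eigenvector $(t,0,0,1-t)$, $0<t<1$, of $P_4$ with $\mu=1$ has positive nodal domains $\{1\},\{4\}$, no negative ones, and $D^0=\{2,3\}$, so $\Sigma_\alpha Z^\downarrow_\alpha=z_{12}+z_{43}=2$ while $\Sigma_\beta Z^\uparrow_\beta=0$.

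For comparison, the paper's proof follows the identical outline and fails at the identical point: it writes $\Sigma_{i=1}^n\Sigma_{j\sim i}z_{ij}=0$ and then "decomposes" this sum while silently discarding every term with $i\notin D^0$ and every term internal to the sign domains, ending with a spurious minus sign to land on $\Sigma_\alpha Z^\uparrow_\alpha-\Sigma_\beta Z^\downarrow_\beta=0$. So your instinct that the separation into "two equal halves" is the delicate point was exactly right, and your refusal to claim it was the correct reaction; but as submitted the proposal proves only the aggregate balance displayed above, not the lemma, and no amount of bookkeeping with the admissible $c_i$ will close that gap.
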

\begin{proof}
By the relationship between $Z^\uparrow$ and $Z^\downarrow$, these two equations are the same. We only need to prove the first one. $\forall\, i\in D^0$ from $z_{ij}=-z_{ji}$, one sees
$$\Sigma_{i\in D^0} \Sigma_{j\sim i, \,\,j\in D^0} z_{ij} =0,$$
and
$$ \Sigma_{i=1}^n\Sigma_{j\sim i} z_{ij}=0.$$
Also, we have
\begin{equation*}
\Sigma_{i=1}^n\Sigma_{j\sim i} z_{ij}= \\
(\Sigma_{i\in D^0}\Sigma_{j\sim i, \,\,j\in D^0}+ \Sigma_\alpha\Sigma_{j\sim i, \,\,j\in D^+_\alpha}+ \Sigma_\beta \Sigma_{j\sim i, \,\,i,j\in D^-_\beta})z_{ij}=\\
\Sigma_\alpha Z^\uparrow_\alpha- \Sigma_\beta Z^\downarrow_\beta.
\end{equation*}
Combining these three equations together, the equation is proved.
\end{proof}

\subsection{Homotopic equivalence}

Given a subset $A\subset X.$ Let $x, y\in A$, we say that $x$ is equivalent to $y$ in $A$, denoted by $x\simeq y$, if there is a path $l$ connecting $x$ and $y$ in $A$, i.e.,$ \exists$ a continuous $ l: [0, 1]\to A$ such that $l(0)=x, l(1)=y.$
\vskip 0.5cm
\begin{theorem}
Let $x=(x_1, \cdots, x_n)\in S(G)$, and
$$\bar{\xi}= (\delta^{\pm}_\alpha)^{-1}\Sigma_{i\in D^{\pm}_\alpha} d_i x_i, \,\,\xi=\bar{\xi}\textrm{E}^{\pm}_\alpha+\Sigma_{j\notin D^{\pm}_\alpha} x_j \textbf{e}_j.$$
Then $\xi\simeq x$ in $S(G)\cap I^{-1}(c)$, where $c=I(x).$
\end{theorem}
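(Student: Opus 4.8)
The plan is to join $x$ to $\xi$ by the straight-line homotopy
$$l(t)=(1-t)\,x+t\,\xi,\qquad t\in[0,1],$$
which leaves every coordinate off $D^{\pm}_\alpha$ frozen and drives the coordinates on $D^{\pm}_\alpha$ linearly to their common weighted mean $\bar\xi$. Since $l(0)=x$, $l(1)=\xi$ and $l$ is manifestly continuous, the whole problem reduces to showing $l(t)\in S(G)\cap I^{-1}(c)$ for every $t$, which I would break into three claims: $l(t)\in X$, $l(t)\in S(G)$ with the same eigenvalue $\mu$, and $I(l(t))\equiv c$. I treat a positive domain $D^{+}_\alpha$; the negative case follows by replacing $x$ with $-x$.

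For the first two claims the point is that no sign degenerates along the path. Writing $x_i(t)=(1-t)x_i+t\bar\xi$ for $i\in D^{+}_\alpha$, each $x_i(t)$ is a convex combination of the strictly positive numbers $x_i$ and $\bar\xi=(\delta^{+}_\alpha)^{-1}\sum_{i\in D^{+}_\alpha}d_ix_i$, so it stays strictly positive; the frozen outside coordinates retain their signs, and by property (2) of the nodal decomposition every outside neighbour of $D^{+}_\alpha$ lies in $\bigcup_\beta D^{-}_\beta\cup D^{0}$ and hence has value $\le 0$. Consequently the original adjacent coefficients $z_{ij}$ remain admissible sign selections for all $t$: an interior difference obeys $x_i(t)-x_j(t)=(1-t)(x_i-x_j)$, keeping its sign for $t<1$ and vanishing at $t=1$, where $\Sgn(0)=[-1,1]$ still contains $z_{ij}$; boundary and exterior differences never change sign. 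Keeping the same $z$, the system (2.3) continues to hold with the same $\mu$, so $l(t)\in S(G)$. Finally the definition of $\bar\xi$ forces $\sum_{i\in D^{+}_\alpha}d_ix_i(t)=\sum_{i\in D^{+}_\alpha}d_ix_i$, and since these coordinates stay positive the normalization $\sum_i d_i|x_i(t)|=1$ is preserved, giving $l(t)\in X$.

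The crux is the energy identity $I(l(t))\equiv c$. I would split $I$ into the edges lying entirely off $D^{+}_\alpha$, the interior edges of $D^{+}_\alpha$, and the boundary edges. The first block is frozen. The interior block equals $(1-t)E_{\mathrm{int}}$ with $E_{\mathrm{int}}=\sum_{\{i,j\}\subset D^{+}_\alpha}|x_i-x_j|$, since every interior difference scales by $(1-t)$. On a boundary edge $x_i(t)>0\ge x_j$, so the boundary block is $\sum_{i\in D^{+}_\alpha}p^{+}_i x_i(t)-C$, where $C$ is the fixed sum of the outside endpoint values and $p^{+}_i$ counts the boundary edges at $i$. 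Now I invoke Lemma 3.3, which reads $E_{\mathrm{int}}+\sum_{i\in D^{+}_\alpha}p^{+}_i x_i=\mu\sum_{i\in D^{+}_\alpha}d_ix_i$, together with (3.5), $\sum_{i\in D^{+}_\alpha}p^{+}_i=\mu\delta^{+}_\alpha$, and $\bar\xi\delta^{+}_\alpha=\sum_{i\in D^{+}_\alpha}d_ix_i$. Inserting $x_i(t)=(1-t)x_i+t\bar\xi$ into the boundary block and using these three relations collapses it to $\mu\sum_{i\in D^{+}_\alpha}d_ix_i-(1-t)E_{\mathrm{int}}-C$, so the two $(1-t)E_{\mathrm{int}}$ terms cancel and $I(l(t))$ is independent of $t$. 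In particular $I(\xi)=I(x)=c$, so the entire path lives on the level set $I^{-1}(c)$, and $x\simeq\xi$ there.

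The main obstacle is precisely this cancellation. It is delicate because it hinges on counting each interior edge exactly once, and on the flux balance of Lemma 3.3 and (3.5): the extra interior energy $E_{\mathrm{int}}$ manufactured by flattening the domain is compensated, term for term, by the drop in boundary energy as the boundary values move from $x_i$ to $\bar\xi$. Once this bookkeeping is in place the rest of the argument is routine convexity and sign tracking.
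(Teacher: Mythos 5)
Your proposal is correct and follows essentially the same route as the paper: the same linear homotopy $x^t=(1-t)x+t\xi$, the same three verifications ($x^t\in X$, $x^t\in S(G)$ via sign-tracking of the $z_{ij}$, and $I(x^t)\equiv c$), and the same three-block split of the energy with the cancellation driven by Lemma 3.3 and $(3.5)$. Nothing essential differs from the paper's own argument.
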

\begin{proof}
We only prove the case $i \in D^+_\alpha $, the rest is the same. We define
$$ x^t=(1-t)x +t \xi,$$
and use the notation: $ \|x\|=\Sigma^n_{j=1} d_j|x_j|.$
We verify:

$1^o. \,\, x^t\in X, i.e., \|x^t\|=1.$ In fact,
$$ \|x^t\|=\Sigma_{j\notin D^+_\alpha} d_j |x_j|+\delta^+_\alpha \bar{\xi}=(\Sigma_{j\notin D^+_\alpha}+\Sigma_{j\in D^+_\alpha})d_j |x_j| =1.$$

$2^o. \,\, x^t\in S(G).$ It is sufficient to verify the system:
\begin{equation}
\Sigma_{j\sim i} z^t_{ij} \in \mu d_i Sgn (x^t_i),\,\,\,\forall\, i.
\end{equation}
where $z^t_{ij}\in Sgn(x^t_i-x^t_j).$ In fact, if at least one of $i$ and $j$ are not in $D^+_\alpha,$ then we have $z^t_{ij}=z_{ij}$, because the order between $x^t_i$ and $x^t_j$ is not changed. Therefore the LHS does not change.

In case both $i$ and $j$ are in $D^+_\alpha,$ when $0\le t<1$, the order between $x^t_i$ and $x^t_j$ is again not changed.  For $t=1$ we can just keep the signs of the differences of the original $x_i$ in order to satisfy the system $(3.11)$. Thus $x^1\in S(G),$ and then $x^t\in S(G)\, \,\,\forall\, t\in [0,1].$

$3^o. \,\, I(x^t)=I(x)\,\,\forall\,\,t\in [0,1].$ We split the sum into three parts (the adjacent relation $\sim$ is symmetric):
\begin{align*}
I(x^t)&=\Sigma_{i=1}^n\Sigma_{j \sim i} |x_i^t-x_j^t|\\
&=(\Sigma_{i\notin D^+_\alpha}\Sigma_{j \sim i,\,j\notin D^+_\alpha}+\Sigma_{i\in D^+_\alpha}\Sigma_{j \sim i,\, j\notin D^+_\alpha} +
\Sigma_{i\in D^+_\alpha}\Sigma_{j \sim i,\,j\in D^+_\alpha})|x^t_i-x^t_j|\\
&=I+ II+ III.
\end{align*}
Since $x^t_k=x_k,\,\forall\, k\notin D^+_\alpha, I$ is invariant. Moreover,
$$ III= \Sigma_{i\in D^+_\alpha}\Sigma_{j \sim i,\,j\in D^+_\alpha}|x^t_i-x^t_j|=(1-t)\Sigma_{i\in D^+_\alpha}\Sigma_{j \sim i,\,j\in D^+_\alpha}|x_i-x_j|,$$
and since $\forall\, i\in D^+_\alpha,\,j\notin D^+_\alpha,\,j\sim i,$ implies $x_i>x_j$,
\begin{align*}
II&=\Sigma_{i\in D^+_\alpha}\Sigma_{j \sim i,\, j\notin D^+_\alpha} (x^t_i-x^t_j) \\
&=\Sigma_{i\in D^+_\alpha}\Sigma_{j \sim i,\,j\notin D^+_\alpha}[(x_i-x_j)+t(\bar{\xi}-x_i)].\\
\end{align*}
In this case, $z_{ij}=1$. From lemma 3.3 and $(3.5)$, we have
\begin{align*}
&\bar{\xi}\Sigma_{i\in D^+_\alpha} p^+_i
=\mu \delta^+_\alpha \bar{\xi}\\
&=\mu\Sigma_{i\in D^+_\alpha}d_i x_i\\
&=\Sigma_{i\in D^+_\alpha}\Sigma_{j \sim i,\,j\in D^+_\alpha}|x_i-x_j|+ \Sigma_{i\in D^+_\alpha} p^+_i x_i,
\end{align*}
we arrive at
$$ II +III= (\Sigma_{i\in D^+_\alpha}\Sigma_{j \sim i,\,j\in D^+_\alpha}+ \Sigma_{i\in D^+_\alpha}\Sigma_{j \sim i,\, j\notin D^+_\alpha})|x_i-x_j|.$$
This prove $I(x^t)=I(x),\,\, t\in [0, 1]$.
\end{proof}
\vskip 0.5cm
By the theorem, any eigenvector $x$ is equivalent to an eigenvector of the form
\begin{equation} \left\{\begin{array} {l}
y=\Sigma_\alpha y^+_\alpha \textrm{E}^+_\alpha-\Sigma_\beta y^-_\beta \textrm{E}^-_\beta,\,\,y^+_\alpha, y^-_\beta>0,\\
\Sigma_\alpha y^+_\alpha \delta^+_\alpha + \Sigma_\beta y^-_\beta \delta^-_\beta=1.
\end{array}\right.
\end{equation}
i.e., on each nodal domain, the components of $y$ are constants. In this case, the function $I$ can be expressed as
\begin{theorem}
$$ I(y)=\Sigma_{\alpha,\beta} Z^+_{\alpha\beta}(y^+_\alpha +y^-_\beta) +\Sigma_\alpha Z^\downarrow_\alpha y^+_\alpha +\Sigma_\beta Z^\downarrow_\beta y^-_\beta.$$
\end{theorem}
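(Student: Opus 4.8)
The plan is to evaluate $I(y)$ directly from its edge form $I(y)=\sum_{e\in E}|(By)_e|$ by sorting the edges of $G$ according to which nodal pieces their two endpoints fall into. The decisive simplification is the hypothesis that $y$ is constant on each nodal domain and on the null set: it equals $y^+_\alpha$ on $D^+_\alpha$, equals $-y^-_\beta$ on $D^-_\beta$, and equals $0$ on $D^0$. Hence for an edge $e=(i,j)$ the quantity $|(By)_e|=|y_i-y_j|$ depends only on the two blocks containing $i$ and $j$, so $I(y)$ collapses to a sum over the possible \emph{types} of edges.

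First I would dispose of the edges that contribute nothing. Any edge with both endpoints in a single $D^+_\alpha$, a single $D^-_\beta$, or in $D^0$ gives $|y_i-y_j|=0$. Moreover, by the first of the three adjacency statements listed for the nodal domains, distinct positive domains $D^+_\alpha,D^+_{\alpha'}$ are never joined by an edge, and likewise for distinct negative domains. Consequently the only edges carrying a nonzero contribution are those joining a positive domain to a negative domain, a positive domain to $D^0$, or a negative domain to $D^0$.

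Next I would compute the three surviving types and read off the edge counts from the $Z$-symbols. For $e=(i,j)$ with $i\in D^+_\alpha$, $j\in D^-_\beta$ one has $y_i-y_j=y^+_\alpha+y^-_\beta>0$, so $z_{ij}=1$ and $|y_i-y_j|=y^+_\alpha+y^-_\beta$; thus $Z^+_{\alpha\beta}$ equals the number of such edges and this type contributes $\Sigma_{\alpha,\beta}Z^+_{\alpha\beta}(y^+_\alpha+y^-_\beta)$. For $i\in D^+_\alpha$, $j\in D^0$ one has $y_i-y_j=y^+_\alpha>0$, $z_{ij}=1$, and $Z^\downarrow_\alpha$ counts these edges, giving $\Sigma_\alpha Z^\downarrow_\alpha y^+_\alpha$. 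For $i\in D^0$, $j\in D^-_\beta$ one has $y_i-y_j=y^-_\beta>0$, $z_{ij}=1$, and $Z^\downarrow_\beta$ counts these edges, giving $\Sigma_\beta Z^\downarrow_\beta y^-_\beta$. Adding the three pieces yields the claimed formula.

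The computation is essentially bookkeeping, so the only point requiring care — and the place I expect to be the mild obstacle — is the translation between the signed adjacent-coefficient sums $Z^+_{\alpha\beta},Z^\downarrow_\alpha,Z^\downarrow_\beta$ and honest edge counts. This hinges on checking that $z_{ij}=1$ on every crossing edge of each surviving type, which follows from $z_{ij}\in Sgn(y_i-y_j)$ together with the strict sign pattern $y^+_\alpha>0>-y^-_\beta$; one must also keep the orientation convention $z_{ij}=-z_{ji}$ consistent so that each unordered edge is counted exactly once, matching the single-edge convention $I(y)=\sum_{e\in E}|(By)_e|$ used here.
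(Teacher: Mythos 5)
Your proposal is correct and is essentially the paper's own argument run in the opposite direction: the paper expands the right-hand side, writes each $Z$-symbol as a sum of $z_{ij}(y)(y_i-y_j)=|y_i-y_j|$ over ordered pairs, and collapses it to $I(y)$ using the same two facts you invoke (no edges between two positive or two negative domains, and zero contribution from edges internal to a block), whereas you partition the edge sum for $I(y)$ by endpoint type and reassemble the right-hand side. The key observations — that $z_{ij}=1$ on every surviving crossing edge because of the strict sign pattern, so the $Z$-symbols are genuine edge counts — are identical, and your explicit attention to the once-per-edge counting convention is if anything slightly more careful than the paper's.
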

\begin{proof}
Since there is no connections between any two positive nodal domains: $D^+_{\alpha_1}$ and $D^+_{\alpha_2},\,\, \alpha_1 \neq \alpha_2$, neither for any two negative nodal domains,
\begin{align*}
RHS&=\Sigma_{\alpha,\beta} \Sigma_{i\in D^+_\alpha}\Sigma_{j\sim i, j\in D^-_\beta}  z_{ij}(y)(y_i-y_j)\\
&+\Sigma_\alpha\Sigma_{i\in D^+_\alpha} \Sigma_{j\sim i, j\in D^0}  z_{ij}(y)y_i
+\Sigma_\beta \Sigma_{i\in D^0}\Sigma_{j\sim i, j\in D^-_\beta}  z_{ij}(y)(-y_j)\\
&=\Sigma_{i=1}^n\Sigma_{j\sim i}z_{ij}(y)(y_i-y_j)\\
&=\Sigma_{i=1}^n\Sigma_{j\sim i} |y_i-y_j| =I(y).
\end{align*}
\end{proof}

\subsection{Cell structure for eigenvectors}

Keeping those equivalent statements of the adjacent relations connecting nodal domains and the nodal set in mind and combining them with Theorem 3.5, we obtain
\begin{theorem} Let $\phi\in S(G)$ be of the form $(3.12)$. Then vectors in the cell
\begin{equation*}\begin{array} {l}
\Delta_{r^+ +r^- -1}=\{\psi=\Sigma_{\alpha=1}^{r^+} y^+_\alpha \textrm{E}^+_\alpha-\Sigma_{\beta=1}^{r^-} y^-_\beta \textrm{E}^-_\beta,\,|\,y^+_\alpha, y^-_\beta\ge 0,\,\,
\Sigma_{\alpha=1}^{r^+} y^+_\alpha \delta^+_\alpha + \Sigma_{\beta=1}^{r^-} y^-_\beta \delta^-_\beta=1\}
\end{array}
\end{equation*}
are all eigenvectors with the same eigenvalue as $\phi$.
\end{theorem}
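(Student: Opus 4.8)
The plan is to show that every $\psi$ in the cell $\Delta_{r^+ + r^- -1}$ satisfies the eigenpair system $(2.3)$ with the \emph{same} eigenvalue $\mu = I(\phi)$, by reusing \emph{the very same} adjacent coefficients that realize $\phi$ as an eigenvector. Write $\phi$ in the form $(3.12)$ with $\phi|_{D^+_\alpha}=y^+_\alpha>0$, $\phi|_{D^-_\beta}=-y^-_\beta<0$, $\phi|_{D^0}=0$, and let $z_{ij}=z_{ij}(\phi)$ be a choice of adjacent coefficients witnessing $\Sigma_{j\sim i} z_{ij}\in \mu d_i Sgn(\phi_i)$. For a given $\psi$ in the cell I set $z_{ij}(\psi):=z_{ij}$; these automatically inherit the antisymmetry $z_{ji}=-z_{ij}$, and $\psi\in X$ is guaranteed by the normalization $\Sigma_\alpha y^+_\alpha \delta^+_\alpha+\Sigma_\beta y^-_\beta \delta^-_\beta=1$ built into the cell. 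It then remains to verify the two membership conditions $z_{ij}\in Sgn(\psi_i-\psi_j)$ and $\Sigma_{j\sim i} z_{ij}\in \mu d_i Sgn(\psi_i)$.

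For the first condition I would use the adjacency structure, in particular the fact that no edge joins two distinct positive nodal domains, nor two distinct negative ones, to enumerate the possible locations of an edge $i\sim j$. If $i,j$ lie in a common nodal domain, or both in $D^0$, then $\psi_i=\psi_j$, so $\psi_i-\psi_j=0$ and $Sgn(\psi_i-\psi_j)=[-1,1]\ni z_{ij}$ because $|z_{ij}|\le 1$. In every remaining case the endpoints sit in regions of distinct weak sign: for $i\in D^+_\alpha$ and $j\in D^0\cup(\cup_\beta D^-_\beta)$ one has $\psi_i-\psi_j=y^+_\alpha+(\text{a nonnegative quantity})\ge 0$, whereas for $\phi$ the same difference was strictly positive and \emph{forced} $z_{ij}=1$; hence $z_{ij}=1\in Sgn(\psi_i-\psi_j)$, whether this difference is positive (giving $\{1\}$) or zero (giving $[-1,1]$). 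The symmetric discussion handles $i\in D^-_\beta$. Thus the old coefficients remain admissible for $\psi$.

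For the second condition, observe that since the coefficients are unchanged the sum $\Sigma_{j\sim i} z_{ij}$ equals exactly the value it had for $\phi$, namely $\mu d_i\, sgn(\phi_i)$ when $\phi_i\neq 0$ and a number lying in $\mu d_i[-1,1]$ when $\phi_i=0$. If $\psi_i\neq 0$ then $\psi_i$ carries the same sign as $\phi_i$, so $Sgn(\psi_i)=\{sgn(\phi_i)\}$ and the equality $\Sigma_{j\sim i} z_{ij}=\mu d_i\, sgn(\phi_i)$ is precisely the required membership. If instead $\psi_i=0$ — either because $i\in D^0$ or because the corresponding cell coordinate $y^{\pm}_\tau$ has been driven to $0$ — then $Sgn(\psi_i)=[-1,1]$, and since the sum equals $\pm\mu d_i$ or already lies in $\mu d_i[-1,1]$, it certainly lies in $\mu d_i Sgn(\psi_i)$. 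Hence $(2.3)$ holds for $(\mu,\psi)$, and by Corollary 2.5 its eigenvalue is $I(\psi)=\mu=I(\phi)$.

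The delicate point — and the step I expect to require the most care — is the treatment of the boundary faces of the cell, where one or more of the coordinates $y^+_\alpha,y^-_\beta$ vanish. There the nodal-domain decomposition of $\psi$ genuinely differs from that of $\phi$, since the vanishing domains are absorbed into the null set, so one cannot simply transport the eigenvector structure of $\phi$; the argument must instead exploit that $Sgn$ only \emph{enlarges}, from $\{\pm 1\}$ to $[-1,1]$, as a coordinate passes through $0$, which is exactly what keeps both membership conditions valid. Everywhere in the relative interior the verification is a plain substitution, so this boundary bookkeeping is the crux of the proof.
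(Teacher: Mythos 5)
Your proof is correct and follows essentially the same route as the paper: transport the adjacent coefficients $z_{ij}(\phi)$ and the values $m_i(\phi)\in Sgn(\phi_i)$ to $\psi$, and observe that $Sgn(\phi_i-\phi_j)\subset Sgn(\psi_i-\psi_j)$ and $Sgn(\phi_i)\subset Sgn(\psi_i)$ wherever a coordinate degenerates to zero, so both membership conditions persist. If anything, your explicit treatment of the boundary faces where some $y^\pm_\tau=0$ is more careful than the paper's one-line assertion that the signatures are ``the same.''
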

\begin{proof} Since $\phi$ is an eigenvector, there exist $\mu\in [0,1],\, z_{ij}(\phi)\in Sgn(x_i-x_j),\, m_i(\phi)\in Sgn(x_i),$ such that
$$ \Sigma_{j \sim i} z_{ij}(\phi)= \mu d_i m_i(\phi).$$
However for any vector $\psi=(y_1, y_2, \cdots, y_n)\in \Delta_{r^+ +r^- -1}$, the signatures of $\psi$ in each nodal domain as well as in $D^0$ are the same as those of $\phi$. We choose $m_i(\psi)= m_i(\phi)$.  While in each nodal domain  $y_i,\, i=1,2,\cdots, n$, are constants, all the coordinates at adjacent vertices do not change their order. Therefore all terms $Sgn (x_i-x_j)\subset Sgn (y_i-y_j)\, \forall j\sim i$, and then $m_i(\psi)=m_i(\phi)= \Sigma_{j \sim i}z_{ij}(\phi)\in \Sigma_{j \sim i}Sgn(y_i-y_j)$.

Thus, $\psi$ satisfies the same system, i.e., it is an eigenvector with the same eigenvalue $\mu$.
\end{proof}
\vskip 0.5cm
\begin{corollary} If $x=(x_1, \cdots, x_n)$ is an eigenvector with $r$ nodal domains, then
$x\in \Delta_{r-1}\subset S(G)$, i.e., $x$ lies on a $r-1$ cell, which consists of eigenvectors.
\end{corollary}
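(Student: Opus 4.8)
The plan is to deduce Corollary 3.8 almost immediately from Theorem 3.7, since Theorem 3.7 has already done the substantive work of showing that a whole cell spanned by the nodal-domain indicator vectors consists of eigenvectors sharing a single eigenvalue. The only gap to bridge is that Theorem 3.7 is phrased for an eigenvector $\phi$ \emph{of the special form} $(3.12)$ (constant on each nodal domain), whereas Corollary 3.8 starts from an arbitrary eigenvector $x$ with $r$ nodal domains. So the first step is to reduce the general $x$ to a vector of the form $(3.12)$.

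\textbf{Reduction to the normalized form.}
Given the eigenvector $x$ with $r=r^++r^-$ nodal domains, I would apply Theorem 3.5 repeatedly, once for each nodal domain $D^{\pm}_\alpha$, replacing the (possibly non-constant) values of $x$ on that domain by their weighted average $\bar\xi=(\delta^{\pm}_\alpha)^{-1}\Sigma_{i\in D^{\pm}_\alpha} d_i x_i$. Each such replacement produces an eigenvector in $S(G)$ with the same value of $I$ (hence, by Corollary 2.5, the same eigenvalue $\mu$), and it preserves the signatures on every vertex, so the nodal domains $D^{\pm}_\alpha$ and the null set $D^0$ are unchanged. Iterating over all $r$ nodal domains yields an eigenvector $\phi$ that is constant on each nodal domain, i.e., exactly of the form $(3.12)$, with the same collection of nodal domains as $x$. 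A point worth checking is that averaging one domain does not disturb the constancy already achieved on previously processed domains; this holds because Theorem 3.5 only modifies coordinates inside the domain $D^{\pm}_\alpha$ being treated and leaves all $x_j$ with $j\notin D^{\pm}_\alpha$ fixed.

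\textbf{Applying Theorem 3.7 and locating $x$.}
Once $\phi$ of the form $(3.12)$ is in hand, Theorem 3.7 tells me that the entire cell
$$\Delta_{r-1}=\Big\{\psi=\Sigma_{\alpha=1}^{r^+} y^+_\alpha \textrm{E}^+_\alpha-\Sigma_{\beta=1}^{r^-} y^-_\beta \textrm{E}^-_\beta \,\Big|\, y^+_\alpha, y^-_\beta\ge 0,\ \Sigma_\alpha y^+_\alpha \delta^+_\alpha + \Sigma_\beta y^-_\beta \delta^-_\beta=1\Big\}$$
consists of eigenvectors with eigenvalue $\mu$, and this is a cell of dimension $r-1$ (the defining vectors $\textrm{E}^+_\alpha,\textrm{E}^-_\beta$ have disjoint supports, hence are linearly independent, so the simplex genuinely has dimension $r-1$). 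It remains only to observe that the \emph{original} $x$ itself lies in $\Delta_{r-1}$: writing $x$ in its nodal-domain decomposition (Definition 3.3), the coefficients $\xi_i=|x_i|$ are positive on each nodal domain and satisfy the normalization $\Sigma d_i\xi_i=1$ from the constraint $x\in X$, so $x$ is recovered by a choice of nonnegative weights meeting the defining constraint of $\Delta_{r-1}$. Thus $x\in\Delta_{r-1}\subset S(G)$, which is the assertion.

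\textbf{Anticipated obstacle.}
The genuinely delicate point, and the one I would state most carefully, is that $x$ itself sits in the cell \emph{generated by its own normalized form}: I must be sure that the averaging process of the first step does not change which indicator vectors $\textrm{E}^\pm_\gamma$ appear, i.e., that no nodal domain collapses or merges under averaging. This is exactly guaranteed because averaging preserves strict positivity/negativity of the signature on each domain and preserves membership in $D^0$, so the decomposition $V=\bigcup D^+_\alpha\cup\bigcup D^-_\beta\cup D^0$ is identical for $x$ and for $\phi$. Everything else is bookkeeping, and the hard analytic content—verifying that the full cell lies in $S(G)$ and carries one eigenvalue—has already been discharged by Theorems 3.5 and 3.7.
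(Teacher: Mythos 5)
Your reduction step (iterated use of Theorem 3.5 to replace $x$ by a domain-wise constant eigenvector $\phi$ of the form $(3.12)$ with the same nodal domains and the same eigenvalue) and your application of Theorem 3.7 to $\phi$ are both sound, and this is essentially the route the paper intends: Corollary 3.8 is stated there without proof, as an immediate consequence of Theorem 3.7 after the remark that every eigenvector is equivalent to one of the form $(3.12)$.

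The gap is in your final step, where you place the \emph{original} $x$ inside $\Delta_{r-1}$. The cell of Theorem 3.7 is
$$\Delta_{r-1}=\Bigl\{\Sigma_{\alpha} y^+_\alpha \textrm{E}^+_\alpha-\Sigma_{\beta} y^-_\beta \textrm{E}^-_\beta\Bigr\},$$
parametrized by \emph{one weight per nodal domain}, so every vector in it is constant on each $D^{\pm}_\gamma$. Your justification (``the coefficients $\xi_i=|x_i|$ are positive on each nodal domain and satisfy the normalization, so $x$ is recovered by a choice of nonnegative weights'') silently replaces the per-vertex coefficients $\xi_i$ of Definition 3.3 by per-domain weights $y^{\pm}_\gamma$; this is possible only when $x$ is already constant on each of its nodal domains. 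Neither the paper nor your argument establishes that an arbitrary eigenvector has this property --- Theorem 3.5 exists precisely to deform a non-constant eigenvector to a constant one, and what it produces is an eigenvector \emph{equivalent} to $x$, not $x$ itself. Your ``anticipated obstacle'' paragraph addresses a different worry (that the nodal domains might change under averaging) and does not touch this one. To close the gap you must either (i) prove that every eigenvector is constant on each of its nodal domains, which is not available, or (ii) read ``$x$ lies on $\Delta_{r-1}$'' as the paper implicitly does, namely that $x$ is joined by a path inside $S(G)\cap I^{-1}(\mu)$ (Theorem 3.5) to a point of the $(r-1)$-cell $\Delta_{r-1}\subset S(G)$ determined by its nodal domains; under reading (ii) your first two steps already complete the argument and the third should be dropped.
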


\subsection{Normalization}

As a corollary of Theorem 3.5 and Corollary 3.8, we can make the components of the eigenvector being constant on all nodal domains. Namely
\begin{theorem} Let $x=(x_1, \cdots, x_n)$ be an eigenvector of the $1-$ Laplacian $\Delta_1(G)$ on a graph $G=(V, H)$. Then $x\simeq \hat{x}$, where
$$ \hat{x}=\delta^{-1}[\Sigma_\alpha \textrm{E}^+_\alpha- \Sigma_\beta\textrm{E}^-_\beta],\,\,\delta=\delta^+ +\delta^-,$$
and $E^\pm_\gamma$ are the nodal domains with respect to $x$.
\end{theorem}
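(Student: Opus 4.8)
The plan is to combine the homotopy of Theorem 3.5 with the cell structure of Theorem 3.7 in two stages: first flatten $x$ so that it becomes constant on every nodal domain, and then slide that flattened vector to the uniform-height vector $\hat{x}$ along a straight segment lying inside the cell of eigenvectors.

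First I would apply Theorem 3.5 once for each of the $r^+ + r^-$ nodal domains in turn. For a single domain $D^{\pm}_\alpha$, Theorem 3.5 replaces the coordinates of $x$ there by their $d$-weighted average $\bar{\xi}$ and produces an eigenvector $\xi \simeq x$ in $S(G)\cap I^{-1}(c)$ with $c=I(x)$. The point that makes the iteration legitimate is that averaging does not disturb the nodal decomposition: on a positive domain $\bar{\xi}=(\delta^+_\alpha)^{-1}\Sigma_{i\in D^+_\alpha} d_i x_i>0$ and on a negative domain $\bar{\xi}<0$, while all other coordinates and the null set $D^0$ are left untouched; hence after the step the same sets $D^{\pm}_\gamma$ remain the nodal domains and the next application is available. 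Chaining these homotopies and using transitivity of $\simeq$ deforms $x$ to a vector $y$ of the form (3.12), namely $y=\Sigma_\alpha y^+_\alpha \textrm{E}^+_\alpha-\Sigma_\beta y^-_\beta \textrm{E}^-_\beta$ with $y^+_\alpha, y^-_\beta>0$ and $\Sigma_\alpha y^+_\alpha \delta^+_\alpha + \Sigma_\beta y^-_\beta \delta^-_\beta=1$, with $x\simeq y$ and $I$ constant along the way.

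Next I would locate both $y$ and $\hat{x}$ inside the simplex
$$\Delta_{r^+ + r^- -1}=\{\psi=\Sigma_\alpha y^+_\alpha \textrm{E}^+_\alpha-\Sigma_\beta y^-_\beta \textrm{E}^-_\beta:\ y^+_\alpha, y^-_\beta\ge 0,\ \Sigma_\alpha y^+_\alpha \delta^+_\alpha + \Sigma_\beta y^-_\beta \delta^-_\beta=1\}$$
attached to $y$. By Theorem 3.7 every point of this cell is an eigenvector of the same eigenvalue, so $\Delta_{r^+ + r^- -1}\subset S(G)$, and clearly $y\in \Delta_{r^+ + r^- -1}$. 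For $\hat{x}$, the normalizing constant $\delta^{-1}$ is chosen precisely so that the constraint holds: taking $y^+_\alpha=y^-_\beta=\delta^{-1}$ gives $\delta^{-1}(\Sigma_\alpha \delta^+_\alpha + \Sigma_\beta \delta^-_\beta)=\delta^{-1}(\delta^+ + \delta^-)=\delta^{-1}\delta=1$, so $\hat{x}$ satisfies the cell's defining equation and $\hat{x}\in \Delta_{r^+ + r^- -1}$ as well.

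Finally, since the cell is a convex simplex, the straight segment $t\mapsto (1-t)y+t\hat{x}$ stays in $\Delta_{r^+ + r^- -1}\subset S(G)$, so it is an admissible path witnessing $y\simeq \hat{x}$; composing with the earlier homotopy gives $x\simeq \hat{x}$. I do not expect a serious obstacle here, since the statement is essentially a packaging of Theorem 3.5 and Theorem 3.7. The only points needing care are the bookkeeping that successive applications of Theorem 3.5 preserve the nodal-domain structure so that the averaging can be iterated over all domains, and the verification that the uniform height $\delta^{-1}$ lands $\hat{x}$ on the cell — both of which are routine once the normalization is unwound.
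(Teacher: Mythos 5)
Your proposal is correct and follows exactly the route the paper intends: the paper states this theorem as a direct corollary of Theorem 3.5 and Corollary 3.8, and your argument is simply a careful unwinding of that — iterating the averaging homotopy of Theorem 3.5 over each nodal domain (noting the nodal decomposition is preserved), then connecting the flattened vector to $\hat{x}$ by a segment inside the cell of Theorem 3.7. The verification that $\hat{x}$ lies on the cell and the preservation of the eigenvalue along both stages are handled correctly.
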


Now, we arrive at the following
\begin{definition} An eigenvector $x$ of the $1-$ Laplacian $\Delta_1(G)$ on a graph $G=(V, E)$ is called normal, if it is of the form
\begin{equation}
 x=\delta^{-1}[\Sigma_\alpha \textrm{E}^+_\alpha- \Sigma_\beta\textrm{E}^-_\beta],
\end{equation}
where
$$ \textrm{E}^{\pm}_\tau=\Sigma_{i\in D^{\pm}_\tau} \textbf{e}_i,\,\,\delta^{\pm}_\tau=\Sigma_{i\in D^{\pm}_\tau } d_i.$$
$$ \delta^+=\Sigma^{r^+}_{\tau=1}\delta^+_\tau, \,\, \delta^-=\Sigma^{r^-}_{\tau=1}\delta^-_\tau,\,\,\,\delta=\delta^+ +\delta^-.$$
and $D^{\pm}_\tau, \,\tau=1,2,\cdots, r^{\pm}$, are all nodal domains with respect to $x.$
\end{definition}
\vskip 0.5cm
Our theorem 3.9 is restated as
\begin{theorem} Any eigenvector $x$ of $\Delta_1(G)$ is equivalent to a normal eigenvector within the set of eigenvectors with the same eigenvalue $\mu.$
\end{theorem}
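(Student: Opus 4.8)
The statement is a repackaging of Theorem 3.9 in the language of Definition 3.10, so the plan is to assemble it from the two structural results already in hand: the homotopic equivalence of Theorem 3.5 and the cell structure of Corollary 3.8. The whole argument is a three-step chain $x \simeq y \simeq \hat{x}$ carried out inside $S(G)\cap I^{-1}(\mu)$.

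First I would reduce an arbitrary eigenvector $x$ to one that is constant on each nodal domain. Theorem 3.5 does exactly this for a single nodal domain $D^{\pm}_\tau$: it replaces the entries there by their common weighted average $\bar{\xi}=(\delta^{\pm}_\tau)^{-1}\sum_{i\in D^{\pm}_\tau} d_i x_i$ while staying inside $S(G)\cap I^{-1}(\mu)$. The key point is that this flattening does not disturb the sign pattern (on a positive domain $\bar{\xi}>0$, on a negative domain $\bar{\xi}<0$, and all other entries are untouched), so the nodal domain decomposition is unchanged and Theorem 3.5 applies again to the next domain. Iterating over the finitely many domains $D^+_1,\dots,D^+_{r^+},D^-_1,\dots,D^-_{r^-}$ and concatenating the resulting paths, I obtain a vector $y$ of the form (3.12), equivalent to $x$ in $S(G)\cap I^{-1}(\mu)$.

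Next I would place $y$ inside a cell and steer it to the normal vector. By Corollary 3.8 (via Theorem 3.7) $y$ lies on the cell $\Delta_{r-1}$, every point of which is an eigenvector with the same eigenvalue $\mu$; by Corollary 2.5 the function $I$ is identically $\mu$ on this cell, so $\Delta_{r-1}\subset S(G)\cap I^{-1}(\mu)$. I then claim the normal vector $\hat{x}=\delta^{-1}[\Sigma_\alpha \textrm{E}^+_\alpha-\Sigma_\beta \textrm{E}^-_\beta]$ is itself a point of $\Delta_{r-1}$: it is the choice $y^+_\alpha=y^-_\beta=\delta^{-1}>0$ for all $\alpha,\beta$, and the normalization constraint holds since $\delta^{-1}\big(\Sigma_\alpha\delta^+_\alpha+\Sigma_\beta\delta^-_\beta\big)=\delta^{-1}(\delta^++\delta^-)=1$. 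Because the cell is convex, hence path connected, the straight segment from $y$ to $\hat{x}$ lies in $\Delta_{r-1}\subset S(G)\cap I^{-1}(\mu)$, giving $y\simeq\hat{x}$. Composing with $x\simeq y$ yields $x\simeq\hat{x}$, and $\hat{x}$ is normal by Definition 3.10.

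Most of the analytic content is already carried by Theorems 3.5 and 3.7, so the proof is essentially bookkeeping. The one place that needs care is the iteration in the first step: I must verify that flattening one nodal domain preserves the entire nodal domain structure, both the signs and the induced connectivity, so that the hypotheses of Theorem 3.5 remain valid at each stage and the concatenated path never leaves $S(G)\cap I^{-1}(\mu)$. Confirming that $\hat{x}$ genuinely meets the cell's normalization constraint is the only other item, and it is the short computation displayed above.
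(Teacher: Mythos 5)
Your proof is correct and takes essentially the same route as the paper: the paper presents this theorem as a direct corollary of Theorem 3.5 and Corollary 3.8 without writing out the details, and your argument supplies exactly the intended content, namely iterating the flattening of Theorem 3.5 over the finitely many nodal domains (checking the sign pattern is preserved at each stage) and then moving within the convex cell of Corollary 3.8 to the normalized point $\hat{x}$. No changes needed.
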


Let us introduce a subset of $X$ as follow:
$$ \pi = \{x=(x_1, \cdots, x_n)\in X\,||\delta^+(x)-\delta^-(x)|\le \delta^0(x)\}.$$
The following statement is deduced from Theorem 3.9 and Theorem 2.6 directly:
\begin{theorem} Any eigenvector $x$ of $\Delta_1(G)$ with eigenvalue $\mu \neq 0$, lies on $\pi$.
\end{theorem}

\section{Multiplicity and critical point theory on piecewise linear manifolds}

In this section we study the critical point theory of the function $I$ (see (1.9)) on the set $X$ (see (1.7)). The purpose of the study is twofold:

(1) Provide a variational formulation of the eigenvalue problem for $\Delta_1(G)$, which has an explanation on the motivation of definition 2.4 on the eigenvalues of graphs.

(2) Define the multiplicity of eigenvalues. In the linear spectral theory, as a simple application of linear algebra, a graph with $n$ vertices possesses $n$ linearly independent eigenvectors, see for instance, Biyikpglu, Leydold and Stadler\cite{refBLS}, Brouwer and Haemers \cite{refBH}, and Chung\cite{refCh}. However, in the previous section, we have shown that the eigenvectors for the nonlinear operator $\Delta_1(G)$ appear in the form of cells, which are infinite subsets, except the $0-$ cells. How do we measure the multiplicity of these eigenvectors? Do we have some sort of similar multiplicity result in this case? A natural idea in mind is the Liusternik-Schnirelmann theory in nonlinear eigenvalue problems on symmetric differential manifolds.
\vskip 0.5cm

However,  the function $I$ and the manifold $X$ are not smooth. The extension of Liusternik-Schnirelmann theory to nonsmooth setting can be found in Chang\cite{refC1} and Corvellec, Degiovanni and Marzucchi\cite{refCDM} etc. However, all these extensions are abstract, in the application to our problem it is required to concretize the abstract theory. Subsections 4.1-4.2 are devoted to study the tangent space structure. Subsection 4.3 studies the projections on the tangent space.  Theorem 4.2 is crucial in characterizing the critical set of the function $I$. We carefully write down the pseudo-subgradient vector field of $I$ on $X$. The purpose is twofold: (1) Bridging up the relationship between the critical set of $I$ on $X$ and the set of all eigenvectors $S(G)$, and (2) it might be useful in computing the eigenvalues and eigenvectors numerically.

The non-smooth function $I$ on the piecewise linear manifold $X=\{x=(x_1, \cdots, x_n)\in\mathbb{R}^n\,|\, \sum^n_{i=1} d_i|x_i|=1 \}$ are taken into consideration, where $d_1, \cdots, d_n$ are the degrees of vertices. We shall follow the following steps:

1. Clarifying the notion of the critical point of the function $I$ on $X$.

2. Extending the Liusterink- Schnirelmann theory to $I$ on $X$. see Theorem 4.9

3. Building up the connection between the critical set of $I$ on $X$ and the set of all eigenvectors $S(G)$. It is Theorem 4.11.

4. Defining the multiplicity of an eigenvalue, see Definition 4.12.

\subsection{Decomposition}
\vskip 0.5cm
First, we decompose the piecewise linear manifold into pieces of open linear manifolds with different dimensions in addition to several isolated points.

A $k$ index subset is defined to be $\iota =\{i_1, \cdots, i_k\}$, $1\le k\le n$ with $1\le i_1<i_2 \cdots < i_k\le n.$ Let $I_k$ be the set of all $k$ indices. The number of $I_k$ is $|I_k|=C^n_k$, where $C^n_k$ are the binomial coefficients.

Define a vector valued mapping $m: I_k \to 2^k$, i.e.,
$$ m(\iota)=(m(\iota)_{i_1}, \cdots, m(\iota)_{i_k}),$$
where $m(\iota)_i\in \{+1, -1\},\,\forall \,\,i\in \iota.$
The set of all these mappings is denoted by $M_k$. The total number of $M_k, |M_k|=C^n_k 2^k.$

\vskip 0.5cm
We introduce the following notations:
$\forall\, (\iota, m)\in I_k \times M _k$,
\begin{equation}
\Gamma_{\iota, m}=\{ x=\Sigma_{\alpha\in \iota} x_\alpha m_\alpha\textbf{e}_{\alpha}\,\,|\,\,\Sigma_{\alpha\in \iota} x_\alpha d_{\alpha}=1,\,\, x_\alpha>0, \,\forall\,\,\alpha\in \iota\},
\end{equation}
where $d_1, \cdots, d_n$ are the degrees of vertices, and $m_\alpha=m(\iota)_{i_\alpha}$.

It is a $k-1$ cell, $k=1, 2, \cdots, n$. All these cells are open, except $k=1$.

Let $S_{k-1}$ be the set of $2^k C^n_k$ disconnected $k-1$ cells, i.e.,
$$ S_{k-1}=\bigcup_{(\iota, m)\in I_k\times M_k}\Gamma_{\iota, m}. $$
\vskip 0.5cm

Obviously we have the following propositions:

\begin{enumerate}
\item $ \Gamma_{\iota, m}\bigcap \Gamma_{\iota', m'}=\emptyset,\,\,\mbox{if}\,\,  (\iota, m)\neq (\iota', m'),$
\item $ S_i\bigcap S_j=\emptyset,\,\,\mbox{if}\,\,i\neq j,$
\item $ X=\bigcup_{i=0}^{n-1} S_i.$
\end{enumerate}

In particular, $\forall\, a\in X,$ there must be unique $k$ such that $a\in S_{k-1},$ and then  $\exists\,\,(\iota, m)\in I_k\times M_k$, such that $a\in \Gamma_{\iota, m}$, i.e.,
$$ a=\Sigma_{\alpha\in \iota} a_\alpha m_\alpha \textbf{e}_{\alpha}, \, \Sigma^k_{\alpha=1} a_\alpha d_{i_\alpha}=1,\,\,a_\alpha>0, \forall\,\alpha\in \iota.$$

Given two cells $\Gamma_{\iota, m}$ and $\Gamma_{\iota', m'}$, if
$$ \iota \subset \iota', \,\,\,\mbox{and}\,\,\,m'(\iota')|_\iota=m(\iota),$$
then
$$ \Gamma_{\iota, m}\subset \overline{\Gamma_{\iota', m'}}.$$
In this case we write $\Gamma_{\iota', m'}  \succeq \Gamma_{\iota, m}$ and say: the level of $\Gamma_{\iota', m'} $ is higher than that of $\Gamma_{\iota, m}$. In order to specify the relation, we sometimes write $\iota'=\iota\oplus \sigma,\,m'=m\oplus w,$ and  if there is no confusion, we also briefly write $\Gamma_{\iota', m'}$ as $\Delta_{\sigma, w}$.
\vskip 0.5cm
There are totally $3^{n-k}-1$ cells over $\Gamma_{\iota, m}$. Among them the numbers of
$(k+l-1)$ cells is $C^{n-k}_l 2^l,\,\,l=1,2,\cdots, n-k$.

Let $F^{k+l}=\{\Gamma_{\iota', m'}=\Delta_{\sigma, w},|\,\, \iota'=\iota\oplus \sigma\,\,m'=m\oplus w\},
l=1,2,\cdots, n-k$, and $F^k=\Gamma_{\iota, m}$. We have
$|F^{k+l}|=C^{n-k}_l 2^l$, and let
\begin{equation}
\textbf{F}_a=\bigcup_{l=0}^{n-k} F^{k+l}.
\end{equation}
 Given a point $a\in \Gamma_{\iota, m}\in S_{k-1}$, its neighborhood $B_\delta(a)$ on $X$ for small enough $\delta$ lies on $3^{n-k}$ pieces of various dimensional cells:
$$ B_\delta(a)= \bigsqcup_{\Delta_{\sigma, w}\in \textbf{F}_a}  B_{\sigma, w}(a), $$
where $\sigma=(j_1, \cdots, j_l), \,\,  w=(w_{j_1}, \cdots, w_{j_l}),$ and let $m_\alpha=m(\iota)|_{i_\alpha}, \, w_\beta=w(\sigma)|_{j_\beta}$,

\begin{align*}
B_{\sigma, w}(a)=&\{x\in B_\delta(a)\,|\, x=\Sigma_{\alpha\in \iota} \xi_\alpha m_\alpha \textbf{e}_{\alpha} +\Sigma_{\beta\in \sigma}\eta_\beta w_\beta \textbf{e}_{\beta}\\
&\xi_\alpha\in R^1, \eta_\beta>0,\,\,\Sigma_\alpha \xi_\alpha d_{\alpha}+\Sigma_{\beta\in \sigma} \eta_\beta d_{\beta}=1 \}.
\end{align*}

Obviously, $a$ lies in the interior of $\Gamma_{\iota, m}$, and is on the boundary of all those cells $\Delta_{\sigma,w}\in \textbf{F}_a\backslash \Gamma_{\iota, m} $.

\subsection{Tangent space}

Since now $X$ is a piecewise linear manifold, we shall extend the notion of tangent space. $h\in \mathbb{R}^n$ is called a tangent vector of $X$ at $a\in X$, if $a+th\in X$ for small $t>0.$ All tangent vectors form a tangent cone of $X$ at $a$, denoted by $T_a(X)$.

For any $\alpha, \gamma \in \iota$ with $\alpha\neq \gamma$, we define
$$ h_{\alpha,\gamma}=\frac{m_\alpha\textbf{e}_{\alpha}}{d_{\alpha}}-\frac{m_\gamma \textbf{e}_{\gamma}}{d_{\gamma}},$$
and $\forall\,\beta\in \sigma,$
$$ \psi^w_\beta=\frac{w_\beta\textbf{e}_{\beta}}{d_{\beta}}-\frac{m_\gamma \textbf{e}_{\gamma}}{d_{\gamma}}.$$

We have
\begin{theorem} If $a\in \Gamma_{\iota, m}\subset S_{k-1},$ then
\begin{equation*}
T_a(X)= \bigcup_{\Delta_{\sigma, w}\in \textbf{F}_a}T_a(B_{\sigma, w}(a)),
\end{equation*}
where
\begin{equation}
 T_a(B_{\sigma, w}(a))=\{ \Sigma_{\alpha\in (\iota\backslash\{\gamma\})} c_\alpha h_{\alpha,\gamma}+\Sigma_{\beta\in \sigma} b_\beta \psi^w_\beta\,|\, c_\alpha\in R^1, \,b_\beta> 0 \}.
\end{equation}
\end{theorem}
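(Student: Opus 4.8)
The plan is to characterize $T_a(X)$ directly from the defining constraint of $X$, namely $\sum_{i=1}^n d_i|x_i|=1$, by analyzing which directions $h$ keep $a+th$ inside $X$ for small $t>0$. Fix $a\in\Gamma_{\iota,m}\subset S_{k-1}$, so that $a=\Sigma_{\alpha\in\iota} a_\alpha m_\alpha\textbf{e}_\alpha$ with $a_\alpha>0$ and $\Sigma_{\alpha\in\iota} a_\alpha d_\alpha=1$. First I would observe that for an index $\alpha\in\iota$ the coordinate $a_\alpha m_\alpha$ is nonzero with definite sign $m_\alpha$, so for small $t$ the sign of $(a+th)_\alpha$ is forced to remain $m_\alpha$; hence $|(a+th)_\alpha|=m_\alpha(a_\alpha m_\alpha + t h_\alpha)$ is a smooth (affine) function of $t$. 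By contrast, for an index $\beta\notin\iota$ the coordinate $a_\beta=0$, so $|(a+th)_\beta|=|t h_\beta|=t|h_\beta|$, which is nonnegative automatically but contributes $t|h_\beta|$ to the norm; the emergence of a new nonzero coordinate is exactly what moves the point into a higher-level cell $\Delta_{\sigma,w}$ whose added index set is $\sigma=\{\beta: h_\beta\neq 0\}$ and whose added signs are $w_\beta=\sgn(h_\beta)$.

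Next I would impose the constraint $\|a+th\|=1$ for all small $t>0$ and differentiate (or simply expand, since everything is piecewise affine in $t$). The $\iota$-part contributes $\Sigma_{\alpha\in\iota}(a_\alpha d_\alpha + t m_\alpha d_\alpha h_\alpha)$ and the new part contributes $\Sigma_{\beta\in\sigma} t\, d_\beta |h_\beta|$. Since $\Sigma_{\alpha\in\iota} a_\alpha d_\alpha=1$ already, the requirement that the norm stay equal to $1$ reduces to the single linear relation
\begin{equation*}
\Sigma_{\alpha\in\iota} m_\alpha d_\alpha h_\alpha + \Sigma_{\beta\in\sigma} w_\beta d_\beta h_\beta = 0,
\end{equation*}
together with the inequality constraints $h_\beta w_\beta>0$ for $\beta\in\sigma$ (so that the sign assignment $w$ is the genuine emerging sign) and $h_\gamma=0$ for $\gamma\notin\iota\cup\sigma$. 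This says precisely that $h$ lies in the tangent cone of the single cell $B_{\sigma,w}(a)$; taking the union over all admissible $(\sigma,w)\in\textbf{F}_a$ then yields the claimed decomposition $T_a(X)=\bigcup_{\Delta_{\sigma,w}\in\textbf{F}_a}T_a(B_{\sigma,w}(a))$.

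It remains to match the abstract linear description above with the explicit spanning set $\{h_{\alpha,\gamma}\}$ and $\{\psi^w_\beta\}$ given in the statement. Here I would fix one distinguished index $\gamma\in\iota$ and verify that each $h_{\alpha,\gamma}=m_\alpha\textbf{e}_\alpha/d_\alpha - m_\gamma\textbf{e}_\gamma/d_\gamma$ and each $\psi^w_\beta=w_\beta\textbf{e}_\beta/d_\beta - m_\gamma\textbf{e}_\gamma/d_\gamma$ satisfies the linear relation $\Sigma m_\alpha d_\alpha h_\alpha + \Sigma w_\beta d_\beta h_\beta=0$: indeed each generator puts weight $\pm1$ on exactly two coordinates whose signed-degree contributions cancel by construction (the $\alpha$-term contributes $m_\alpha d_\alpha\cdot m_\alpha/d_\alpha = 1$ and the $\gamma$-term contributes $m_\gamma d_\gamma\cdot(-m_\gamma/d_\gamma)=-1$). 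Conversely, any $h$ satisfying the linear constraint can be written as $\Sigma_{\alpha\in\iota\setminus\{\gamma\}} c_\alpha h_{\alpha,\gamma}+\Sigma_{\beta\in\sigma} b_\beta\psi^w_\beta$ by reading off $c_\alpha = m_\alpha d_\alpha h_\alpha$ and $b_\beta=w_\beta d_\beta h_\beta$ and using the constraint to fix the $\gamma$-coefficient; the sign conditions $\eta_\beta>0$ on the cell translate into $b_\beta>0$, while the $c_\alpha$ are free real numbers since the $a_\alpha>0$ leave room to move in either direction within $\iota$. The main obstacle, and the point deserving the most care, is the bookkeeping at the boundary stratum: one must check that allowing $\eta_\beta>0$ (strict) rather than $\eta_\beta\ge 0$ correctly captures each open higher cell exactly once, and that the free directions within $\iota$ (the $c_\alpha$) together with the outward directions into $\sigma$ (the $b_\beta>0$) neither overlap nor omit any admissible tangent direction — this is precisely the content of the disjoint local decomposition $B_\delta(a)=\bigsqcup_{\Delta_{\sigma,w}\in\textbf{F}_a} B_{\sigma,w}(a)$ established in Subsection 4.1, which I would invoke to guarantee the union is the full tangent cone.
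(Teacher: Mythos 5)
Your proposal is correct, and it reaches the theorem by a somewhat different route than the paper. The paper's proof is synthetic: it checks directly that $a+\varepsilon h_{\alpha,\gamma}$ stays in $\Gamma_{\iota,m}$ and that $a+\varepsilon\xi_\beta$ lands in $B_{\sigma,w}(a)$, where $\xi_\beta=-a+w_\beta\textbf{e}_\beta/d_\beta$, and then converts the generating set $\{h_{\alpha,\gamma}\}\cup\{\xi_\beta\}$ into $\{h_{\alpha,\gamma}\}\cup\{\psi^w_\beta\}$ via the identity $\psi^w_\beta=\xi_\beta+\Sigma_{\alpha}a_\alpha d_\alpha h_{\alpha,\gamma}$; the fact that these vectors exhaust the cone is handled only by a rank remark and the local decomposition of $B_\delta(a)$. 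You instead derive the defining equation of the tangent cone analytically: expanding $\Sigma_i d_i|(a+th)_i|=1$ for small $t>0$, using that the signs on $\iota$ are frozen while the newly nonzero coordinates contribute $t|h_\beta|$, yields the single linear relation $\Sigma_{\alpha\in\iota}m_\alpha d_\alpha h_\alpha+\Sigma_{\beta\in\sigma}w_\beta d_\beta h_\beta=0$ together with the sign conditions $w_\beta h_\beta>0$, and you then verify that the stated generators parametrize exactly this solution set by reading off $c_\alpha=m_\alpha d_\alpha h_\alpha$ and $b_\beta=w_\beta d_\beta h_\beta$ and recovering the $\gamma$-component from the relation. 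The two arguments buy different things: yours makes the reverse inclusion $T_a(X)\subseteq\bigcup_{\Delta_{\sigma,w}\in\textbf{F}_a}T_a(B_{\sigma,w}(a))$ fully explicit, since every admissible direction $h$ is shown to satisfy the relation and hence to lie in exactly one piece of the union, at the cost of more coordinate bookkeeping; the paper's is shorter because it only exhibits generators and leaves completeness to the stratification of the neighborhood. Both are valid proofs of the same statement.
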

\begin{proof} First, for $\varepsilon>0$ small, we have
$$ a+\varepsilon h_{\alpha,\gamma}\in \Gamma_{\iota, m}.$$
Thus,
$$ h_{\alpha,\gamma}\in T_a(B_{\sigma, w}(a)).$$
Obviously the rank of these vectors $\{h_{\alpha,\gamma}\,|\,\alpha, \gamma\in \iota\}$ is $k-1$. They form the first group in the bracket of $(4.3)$.
\vskip 0.5cm

It is worth to note:
$$ h_{\alpha,\gamma} = - h_{\gamma, \alpha}.$$

Next, $\forall\, \beta\in \sigma$ and $\forall\, w: \sigma\to \{+1, -1\}$, we define
\begin{equation}
 \xi_{\beta}= (-a+\frac{w_\beta\textbf{e}_{\beta}}{ d_{\beta}}),
\end{equation}
where $w_\beta=w(\sigma)|_{\beta}$.
it follows
$$ a+\varepsilon \xi_{\beta}\in B_{\sigma, w}(a)$$
for $\varepsilon>0$ small, that is
$$ \xi_{\beta}\in T_a(B_{\sigma, w}(a)).$$
\vskip 0.5cm

Note
$$ a=\Sigma_{\alpha\in (\iota\backslash\{\gamma\})  } a_\alpha d_{\alpha} h_{\alpha, \gamma}+\frac{m_\gamma \textbf{e}_{\gamma}}{d_{\gamma}}, $$
we have
$$\psi^w_\beta=
\xi_{\beta}+\Sigma_{\alpha\in (\iota\backslash\{\gamma\}) }a_\alpha d_{\alpha} h_{\alpha, \gamma}.$$

Therefore
$$ \Sigma_{\alpha\in (\iota\backslash\{\gamma\})  } c_\alpha h_{\alpha, \gamma}+\Sigma_{\beta\in \sigma} b_\beta \psi^w_\beta= \Sigma_{\alpha\in (\iota\backslash\{\gamma\}) } c'_\alpha h_{\alpha, \gamma}+\Sigma_{\beta\in \sigma} b'_\beta \xi_\beta,\,\,c_\alpha, c'_\alpha\in R^1,\,b_\beta, b'_\beta> 0. $$
\end{proof}

Thus, $\Delta_{\sigma', w'}\succeq \Delta_{\sigma, w}$ implies $T_a(B_{\sigma, w}(a))\subseteq T_a(B_{\sigma', w'}(a))$.

\subsection{Projection onto $T_a(X)$}

For a real $n$ dimensional vector space $E^n$, we study the duality between $E^n$ and its dual $(E^n)^*$. Let $\{\textbf{e}_1, \cdots, \textbf{e}_n\}$ be a basis of $E^n,$ and let $\{\textbf{e}^*_1, \cdots, \textbf{e}^*_n\}$ be its dual basis, i.e.,
$$ <\textbf{e}^*_i, \textbf{e}_j>=\delta_{ij},\,\,\,\,\forall , j=1, 2, \cdots, n.$$
Thus, $\forall\, x^*=\sum^n_{i=1}x_i \textbf{e}^*_i, \, \forall\, y=\sum^n_{i=1}y_j \textbf{e}_j$
$$ <x^*, y>=\Sigma^n_{i=1} x_i y_i.$$

It is well known that in a finite dimensional space, all norms are equivalent. Thus
for the given basis and its dual basis one may assign the norms of $y$ and $x^*$ on $E^n$ and $(E^n)^*$ by
\begin{equation*}
\|y\|=(\Sigma^n_{i=1}|y_i|^2)^{1/2}\,\,\mbox{and}\,\,\|x^*\|=(\Sigma^n_{i=1}|x_i|^2)^{1/2}.
\end{equation*}
respectively.

Let
$$ n_0=\Sigma_{\alpha\in \iota} m_\alpha d_{i_\alpha} \textbf{e}_{i_\alpha}^*,\,\,\,\tau_{\sigma,w}=\Sigma_{\beta\in \sigma}w_\beta d_\beta \textbf{e}_\beta^*.$$
The normal direction of the cell $\Delta_{(\sigma, w)}$ reads as
\begin{equation}
n=n_0+\tau_{\sigma,w}.
\end{equation}

Let us denote $span\{T_a(B_{\sigma, w}(a)) \}$ by $X_{\sigma, w}$, and its dual by $X_{\sigma, w}^*$.
\vskip 0.5cm

Given a co-vector $p=\Sigma_{i=1}^n p_i \textbf{e}^*_i$, the projection of $p$ onto $X_{\sigma, w}^*$ is the covector:
$p_n=p-\lambda n$, where $n$ is the normal co-vector, and
$$ \lambda=\frac{\Sigma_{\alpha\in \iota} m_\alpha d_{\alpha}p_\alpha+\Sigma_{\beta\in \sigma}w_\beta d_\beta p_\beta}{\Sigma_{\alpha\in \iota}d_\alpha^2 +\Sigma_{\beta\in \sigma}d_\beta^2}.$$

Noticing that $\{h_{\alpha}, \psi^w_\beta,\,\,\,|\,\alpha\in (\iota\backslash\{i_1\}), \beta\in \sigma\}$ is a basis of the linear subspace $X_{\sigma, w}$, where $h_\alpha=h_{\alpha,1}$, we consider the following vector on $X_{\sigma, w}$ induced by $p$:
\begin{equation}
\hat{P}_{(\sigma, w)}p=\Sigma_{\alpha\in \iota}d_\alpha^2 <p, h_\alpha> h_\alpha+\Sigma_{\beta\in \sigma}d_\beta^2 <p, \psi^w_\beta> \psi^w_\beta.
\end{equation}
\vskip 0.5cm

Note that
$$ <n, h_\alpha>=<n, \psi^w_\beta>=0,\,\,\forall\,\alpha\in (\iota\backslash\{i_1\}), \beta\in \sigma,$$
we have
\begin{equation*}
\hat{P}_{(\sigma, w)}p=\Sigma_{\alpha\in \iota}d_\alpha^2 <p_n, h_\alpha> h_\alpha+\Sigma_{\beta\in \sigma}d_\beta^2 <p_n, \psi^w_\beta> \psi^w_\beta.
\end{equation*}
Therefore there exists a constant $C_1>0$ such that
\begin{equation}
\|\hat{P}_{(\sigma, w)}p\|\le C_1\|p_n\|.
\end{equation}
\vskip 0.5cm

Since
$$\|p_n\|^2=\|p\|^2-\lambda^2\|n\|^2,$$
we obtain
\begin{align*}
<p_n,  \hat{P}_{(\sigma, w)}p>&=\Sigma_{\alpha\in \iota}d_\alpha^2 |<p_n, h_\alpha>|^2 +\Sigma_{\beta\in \sigma}d_\beta^2 |<p_n, \psi^w_\beta>|^2 \\
&=\|p_n\|^2+(\lambda-\frac{m_{i_1} p_{i_1}}{d_{i_1}})^2\|n\|^2.
\end{align*}

\vskip 0.5cm
\textbf{Remark}
Let $p\in \partial f(a)$ be a sub-differential of a function $f$. As a constraint sub-differential, $p_n$ is its projection on $X_{\sigma, w}^*$, and then $ \hat{P}_{(\sigma, w)}p$
is a pseudo gradient vector on $X_{\sigma, w}$, i.e., it satisfies
\begin{enumerate}
\item $\|\hat{P}_{(\sigma, w)}p\|\le C_1\|p_n\|,$\\
\item $<p_n,  \hat{P}_{(\sigma, w)}p>\ge \|p_n\|^2.$
\end{enumerate}
This is the motivation of the above construction.

Moreover, we have
$$ \hat{P}_{(\sigma, w)}p=0\,\,\,\forall\,(\sigma, w)\,\,\,\,\Leftrightarrow\,\,\,\, p=0.$$
\vskip 0.5cm

However, $T_a(B_\delta(\sigma, w))$ is a cone. In some directions, it has boundaries. $\forall\, p\neq 0$. In order to make the the projection of $p$ onto $X_{\sigma, w}$ points inside $\triangle_{\sigma, w}$, we should modify the projection as follow:
\begin{equation}
P_{(\sigma, w)}p=\Sigma_{\alpha\in \iota}d_\alpha^2<p, h_\alpha> h_\alpha+\Sigma_{\beta\in \sigma}d_\beta^2<p, \psi^w_\beta>_+ \psi^w_\beta.
\end{equation}
where
\begin{equation*}x_+=\left\{\begin{array} {l}
x \,\,\,\,\mbox{if}\,\,\,x\ge 0, \\
0 \,\,\,\,\mbox{if}\,\,\,x= 0.
\end{array}\right.
\end{equation*}

After the modification, the pseudo gradient vector $P_{(\sigma, w)}p$ is pushed down to the lower level cell $\Delta_{\sigma_0, w_0}\preceq \Delta_{\sigma, w},$ with
$$ <p, \psi^w_\gamma>\le 0,\,\,\,\forall\,\gamma\in \sigma\backslash\sigma_0,\,\mbox{and}\,<p, \psi^w_\beta> > 0,\,\,\forall\,\beta\in \sigma_0.$$

\vskip 0.5cm

\begin{theorem} $\forall\, p\in \mathbb{R}^n,$
$$ P_{(\sigma, w)} p=0 \,\,\forall\, (\sigma, w)\,\Leftrightarrow\, p=\lambda(n_0+\Sigma_{\beta\notin \iota} c_\beta d_{\beta}\textbf{e}_{\beta}^*), $$
where $\lambda\ge 0$ and $ |c_\beta|\le 1,\,\,\forall\, 1\le \beta\le n-k.$
\end{theorem}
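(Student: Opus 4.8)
We must show that the projected pseudo-gradient $P_{(\sigma,w)}p$ vanishes for every admissible pair $(\sigma,w)$ if and only if $p$ has the form $\lambda(n_0 + \Sigma_{\beta\notin\iota} c_\beta d_\beta \mathbf{e}_\beta^*)$ with $\lambda \ge 0$ and $|c_\beta|\le 1$.

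Let me sketch the structure of a proof.

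=== BEGIN PROOF PROPOSAL ===

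The plan is to analyze the two families of coefficients appearing in the definition (4.9) of $P_{(\sigma,w)}p$ separately: the unconstrained coefficients $\langle p, h_\alpha\rangle$ attached to the interior directions $h_\alpha$, and the one-sided coefficients $\langle p, \psi^w_\beta\rangle_+$ attached to the outgoing directions $\psi^w_\beta$. Since the vectors $\{h_\alpha, \psi^w_\beta\}$ are linearly independent (they span $X_{\sigma,w}$), the equation $P_{(\sigma,w)}p = 0$ forces each coefficient to vanish individually: $\langle p, h_\alpha\rangle = 0$ for all $\alpha \in \iota\setminus\{i_1\}$, and $\langle p, \psi^w_\beta\rangle_+ = 0$, i.e. $\langle p, \psi^w_\beta\rangle \le 0$, for all $\beta \in \sigma$.

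First I would handle the $h_\alpha$ conditions, which are required to hold simultaneously for every choice of $(\sigma,w)$ and in particular for $\sigma = \emptyset$. Writing $p = \Sigma_{i=1}^n p_i \mathbf{e}_i^*$ and using $h_\alpha = h_{\alpha,i_1} = \frac{m_\alpha \mathbf{e}_\alpha}{d_\alpha} - \frac{m_{i_1}\mathbf{e}_{i_1}}{d_{i_1}}$, the equation $\langle p, h_\alpha\rangle = 0$ reads $\frac{m_\alpha p_\alpha}{d_\alpha} = \frac{m_{i_1} p_{i_1}}{d_{i_1}}$ for every $\alpha \in \iota$. Hence there is a common value $\lambda := \frac{m_\alpha p_\alpha}{d_\alpha}$, independent of $\alpha\in\iota$, giving $p_\alpha = \lambda m_\alpha d_\alpha$ on the index set $\iota$. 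This already pins down the restriction of $p$ to $\iota$ as exactly $\lambda n_0$, matching the claimed form on the coordinates belonging to $\iota$.

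Next I would extract the constraint on the off-support coordinates $p_\beta$, $\beta \notin \iota$. For a fixed $\beta$ we may choose $\sigma = \{\beta\}$ with either sign $w_\beta = +1$ or $w_\beta = -1$; the vanishing condition $\langle p, \psi^w_\beta\rangle \le 0$ with $\psi^w_\beta = \frac{w_\beta \mathbf{e}_\beta}{d_\beta} - \frac{m_{i_1}\mathbf{e}_{i_1}}{d_{i_1}}$ becomes $\frac{w_\beta p_\beta}{d_\beta} \le \frac{m_{i_1}p_{i_1}}{d_{i_1}} = \lambda$. Running over both sign choices of $w_\beta$ gives $\frac{|p_\beta|}{d_\beta} \le \lambda$, which simultaneously forces $\lambda \ge 0$ and, writing $p_\beta = \lambda c_\beta d_\beta$, yields $|c_\beta| \le 1$. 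This produces precisely the asserted representation $p = \lambda(n_0 + \Sigma_{\beta\notin\iota} c_\beta d_\beta \mathbf{e}_\beta^*)$. For the converse, I would substitute this form back into (4.9): the $h_\alpha$ coefficients vanish by the computation above, and for each $\beta$ one checks $\langle p, \psi^w_\beta\rangle = \lambda(w_\beta c_\beta - 1) \le 0$ since $\lambda \ge 0$ and $w_\beta c_\beta \le |c_\beta| \le 1$, so the positive part kills every $\psi^w_\beta$ term and $P_{(\sigma,w)}p = 0$ for all $(\sigma,w)$.

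The main obstacle is bookkeeping rather than depth: one must be careful that the conditions are imposed over \emph{all} admissible $(\sigma,w)$ and verify that testing against the minimal cells ($\sigma=\emptyset$ for the interior directions, singleton $\sigma$ with both signs for each exterior coordinate) already extracts the full set of constraints, so that no further relations arise from larger $\sigma$. One should also confirm that the distinguished index $i_1$ plays no special role — the relation $\frac{m_\alpha p_\alpha}{d_\alpha}=\lambda$ is symmetric in $\alpha\in\iota$ — and that the degenerate case $\lambda = 0$ (forcing $p_\beta = 0$ off $\iota$ and $p = 0$, consistent with the final remark $\hat P_{(\sigma,w)}p = 0 \ \forall (\sigma,w) \Leftrightarrow p = 0$) is correctly absorbed into the stated form.

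=== END PROOF PROPOSAL ===
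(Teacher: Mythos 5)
Your proposal is correct and follows essentially the same route as the paper: both directions reduce to the two conditions $\langle p,h_\alpha\rangle=0$ for $\alpha\in\iota$ and $\langle p,\psi^w_\beta\rangle\le 0$ for $\beta\notin\iota$ and all $w$, from which $p_\alpha=\lambda m_\alpha d_\alpha$ on $\iota$, $\lambda\ge 0$, and $|c_\beta|\le 1$ follow by the same algebra, and the converse is the same substitution giving $\langle p,\psi^w_\beta\rangle=\lambda(c_\beta w_\beta-1)\le 0$. Your extra care about testing both signs $w_\beta=\pm 1$ and about the degenerate case $\lambda=0$ matches what the paper does implicitly and explicitly, respectively.
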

\begin{proof}
$"\Leftarrow".$

The case $\lambda=0$ is trivial, we may assume $\lambda>0$. It is sufficient to show
$$ <p, h_\alpha>=0,\,\,\,\forall\, \alpha\in \iota\,\,\mbox{and}\,\,\,<p, \psi^w_\beta>\le 0\,\,\,\forall\,\beta\notin \iota,\,\forall\,w.$$
In fact,
$$ <p, h_\alpha>=\lambda<n_0, h_\alpha>=\lambda<\Sigma_{i\in \iota}m_id_i \textbf{e}_i^*, \frac{m_\alpha \textbf{e}_\alpha} {d_\alpha}-\frac{m_{i_1} \textbf{e}_{i_1}}{d_{i_1}}>=0,\,\,\forall\, \alpha\in \iota.$$
and
$$ <p, \psi^w_\beta>=\lambda<n_0+\Sigma_{j\notin \iota}w_jd_j \textbf{e}_j^*, \frac{w_\beta \textbf{e}_\beta} {d_\beta}-\frac{m_{i_1}\textbf{e}_{i_1}}{d_{i_1}}>=\lambda(c_\beta w_\beta-1)\le 0,\,\,\forall\,\beta\notin \iota\,\forall\,w.$$
\vskip 0.5cm

$"\Rightarrow"$ Now, we assume $\forall\, \sigma$ (including $\sigma=\emptyset), \forall\, w:\sigma \to \{\pm 1\}, P_{(\sigma, w)} p=0$.
These imply:
\begin{equation}
<p, h_\alpha>=0,\,\,\,\forall\, \alpha\in \iota,
\end{equation}
and
\begin{equation}
<p, \psi^w_\beta>\le 0\,\,\,\forall\,\beta\notin \iota,\,\forall\,w.
\end{equation}

From (4.9), it follows
$$ \frac{m_\alpha p_\alpha}{d_\alpha}-\frac{m_{i_1} p_{i_1}}{d_{i_1}}=0,\,\,\forall\, \alpha\in \iota.$$
Let $ \lambda=\frac{m_{i_1} p_{i_1}}{d_{i_1}}$.  We obtain
$$ p_\alpha=\lambda m_\alpha d_\alpha,\,\,\forall\, \alpha\in (\iota\backslash\{1\}).$$

From (4.10), we obtain:
$$ \frac{w_\beta p_\beta}{d_\beta}\le \lambda,\,\,\forall\, \beta\notin \iota,\,\,\forall\,w: \sigma\to \{+1, -1\}$$
and then $\lambda\ge 0$.
\vskip 0.5cm

If $\lambda=0$, then $p_{1}=0$ and then $p_{\alpha}=0, \alpha\ge 2$ and $p_{\beta}=0\, \forall\, \beta\notin \iota$.
i.e., $p=0$. The proof is done.

Otherwise, $\lambda>0,$ let
$$ c_\beta=\frac{p_{\beta}}{\lambda d_{\beta}}.$$
Then $ |c_\beta|\le 1, $ and
\begin{align*}
p &= \Sigma_{\alpha\in \iota} p_{\alpha} \textbf{e}_{\alpha}+\sum_{\beta\notin \iota} p_\beta \textbf{e}_\beta\\
&=\lambda(\Sigma_{\alpha\in \iota} m_\alpha d_{\alpha} \textbf{e}_{\alpha}+ \Sigma_{j_\beta\notin \iota} c_\beta d_{\beta}\textbf{e}_{\beta})\\
&=\lambda (n_0+ \Sigma_{\beta\notin \iota} c_\beta d_{\beta}\textbf{e}_{\beta}).
\end{align*}
The proof is complete.
\end{proof}
\vskip 0.5cm

When we want to specify the component $P_{(\sigma, w)}p$ being at the point $a$, we denote it by $P(a)_{(\sigma, w)}p$, and define
\begin{definition} $\forall a\in \Gamma_{\iota, m}\subset X,\, \forall\, p\in R^n,$ the collection of $3^{n-k}$ vectors
$$ P(a) p=\{ P(a)_{(\sigma, w)}p\,|\, \forall\,(\sigma, w)\,\mbox{including}\,\sigma=\emptyset\}$$
is called the projection of $p$ onto $T_a(X)$, where the component $P_{(\sigma, w)}p$ is the vector in $T_a(B_{(\sigma, w)}(a))$.
\end{definition}
\vskip 0.5cm

It is important to note

\begin{enumerate}
\item Generally, $P(a)(-p) \neq -P(a)p.$
In particular, to those $p$ satisfying $P(a)p=0,$ one may have $P(a)(-p)\neq 0$!\\
\item $P(a)_{(\sigma, w)}p\in T_a(B_{(\sigma, w)}(a)).$\\
\item The mapping $p\to P(a) p$ is continuous.
\end{enumerate}
\subsection{Critical point theory on $X$}

Let $f: \mathbb{R}^n\to \mathbb{R}^1$ be a locally Lipschitzian function, the sub-differential $\partial f(x)$ is defined in the Clarke sense. The set valued mapping:
$$ x \mapsto \partial f(x) $$
is convex closed valued and upper semi continuous (u.s.c), i.e., $\forall\, x_0\in \mathbb{R}^n,$
$$x_k\to x_0, \,u_k\in \partial f(x_k)\,\, \mbox{and}\, u_k\to u_0\, \mbox{imply} \,\,u_0\in \partial f(x_0).$$

To the constraint problem, we introduce
\begin{definition} Let $\tilde{f}=f|_X.\,\,a\in X$ is called a critical point of $f$, if $\exists\, p\in \partial f(a)$ such that $P(a)p=0.$ In other words,
$$ 0\in P(a)\partial f(a).$$
Let $K$ be the set of all critical points of $\tilde f$, it is called the critical set. Usually, we write $K_c=K\cap f^{-1}(c),\,\forall\, c\in R^1.$
\end{definition}
\vskip 0.5cm

\begin{lemma}
The set valued mapping $x \mapsto P(x) \partial f(x)$ is u.s.c. i.e, each of its components $P(x)_{(\sigma, w)}\partial f(x)$ is u.s.c. $\forall\,(\sigma, w)$. In other words,
$\forall\,(\sigma, w)$, if $a_k\to a$ in $\Delta_{\sigma, w}$, $p_k\in P(a_k)_{(\sigma, w)}\partial f(a_k)$ and $p_k\to p,$ we have $p\in P(a)_{(\sigma, w)}\partial f(a)$.

Consequently, the critical set $K$ is closed.
\end{lemma}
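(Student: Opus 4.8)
The plan is to prove the upper semicontinuity of each component of the set valued mapping $x \mapsto P(x)\partial f(x)$ by tracing the definition back to its two constituent pieces: the Clarke subdifferential $\partial f$, which is already u.s.c. with convex closed values by the standard theory, and the projection operator $P(x)_{(\sigma, w)}$, whose continuity in the base point $x$ and in the covector $p$ was recorded in the three remarks following Definition 4.6. First I would fix a cell $\Delta_{\sigma, w}$ and take a convergent sequence $a_k \to a$ inside $\overline{\Delta_{\sigma, w}}$, together with $p_k \in P(a_k)_{(\sigma, w)}\partial f(a_k)$ satisfying $p_k \to p$. By the definition of the component projection, each $p_k$ is of the form $P(a_k)_{(\sigma, w)} q_k$ for some $q_k \in \partial f(a_k)$. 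The goal is to produce a limit subgradient $q \in \partial f(a)$ with $p = P(a)_{(\sigma, w)} q$.

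The key steps, in order, would be: extract a convergent subsequence of the $q_k$. This is where I must be careful, so I would first establish that the $q_k$ may be taken to lie in a bounded set. Since $f$ is locally Lipschitzian, near the limit point $a$ the Clarke subdifferentials $\partial f(a_k)$ are uniformly bounded (by the local Lipschitz constant), so $(q_k)$ has a convergent subsequence $q_{k_j} \to q$. The u.s.c. of $\partial f$ then gives $q \in \partial f(a)$. Next, I would invoke the continuity of the projection map in \emph{both} arguments: because $p \mapsto P(x)_{(\sigma, w)}p$ is continuous and, crucially, the coefficients $h_{\alpha,\gamma}(x)$ and $\psi^w_\beta(x)$ defining $P(x)_{(\sigma, w)}$ depend continuously on the base point $x$ through the fixed index data $(\iota, m, \sigma, w)$, we get
$$ P(a_{k_j})_{(\sigma, w)} q_{k_j} \longrightarrow P(a)_{(\sigma, w)} q. $$
Since the left side equals $p_{k_j} \to p$, uniqueness of limits forces $p = P(a)_{(\sigma, w)} q$, which is exactly membership in $P(a)_{(\sigma, w)}\partial f(a)$. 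This proves the component is u.s.c., and since the full projection $P(x)\partial f(x)$ is just the finite collection of these $3^{n-k}$ components, the whole mapping is u.s.c.

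The main obstacle I anticipate is the interaction between the truncation $\langle \cdot, \cdot\rangle_+$ in the definition (4.8) of $P_{(\sigma, w)}$ and the passage to the limit. The positive-part operation $x \mapsto x_+$ is continuous but not smooth, and more subtly, the remark after (4.8) notes that the modified projection may be \emph{pushed down} to a lower-level cell $\Delta_{\sigma_0, w_0} \preceq \Delta_{\sigma, w}$ depending on the signs of $\langle q_k, \psi^w_\beta\rangle$; these signs can jump in the limit when one of the pairings $\langle q_k, \psi^w_\beta\rangle$ crosses zero. I would handle this by noting that $x \mapsto x_+$ is nonetheless a genuinely continuous function of its argument, so $\langle q_{k_j}, \psi^w_\beta\rangle_+ \to \langle q, \psi^w_\beta\rangle_+$ holds regardless of sign changes, and the formula (4.8) is continuous as a composition of continuous maps; the apparent ``cell jumping'' is already absorbed into this continuity and does not break the argument. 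The final sentence, that $K$ is closed, then follows immediately: if $a_k \to a$ with $a_k \in K$, apply the just-proved u.s.c. to the witnessing $p_k = 0 \in P(a_k)\partial f(a_k)$ (with $p_k \to 0$) to conclude $0 \in P(a)\partial f(a)$, i.e. $a \in K$.
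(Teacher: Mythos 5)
Your proposal is correct and takes essentially the same route as the paper, which simply cites the upper semicontinuity of the Clarke subdifferential together with the continuity of the projection $p\mapsto P(x)_{(\sigma,w)}p$ and concludes; your write-up supplies the details the paper leaves implicit, namely the local boundedness of $\partial f$ needed to extract a convergent subsequence of subgradients, the continuity of the positive-part truncation in $(4.8)$, and the explicit deduction that $K$ is closed.
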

\begin{proof}
In fact, from the u.s.c. of $x \mapsto \partial f(x)$ and the continuity of $x \mapsto P(x)p$:
$$ <p_k, h_\alpha>\to <p, h_\alpha>,\,\mbox{and}\,\,<p_k, \psi^w_\beta>\to <p, \psi^w_\beta>,$$
the u.s.c. of $x \mapsto P(x) \partial f(x)$ follows.
The closeness of $K$ follows directly from the u.s.c. of $P(x) \partial f(x)$.
\end{proof}
\vskip 0.5cm

 We define
\begin{definition}$\forall\, a\in \Gamma_{\iota, m},\,\forall\, (\sigma, w)$, set
$$ \lambda (a)=\min_{u\in \partial f(x)}\{max_{(\sigma, w)}\{\|p\|\,|\,p\in P_{(\sigma, w)}u\}.$$
\end{definition}

By definition, $a\in X$ is a critical point of $\tilde f$ if and only if $\lambda(a)=0.$


\begin{lemma}For any $a\notin K,\,\,\forall\,\epsilon>0, \exists\,v_0\in T_a(X)$ and $\delta>0$, satisfying $\|v_0\|=1$ and
$$ f(y-tv_0)\le f(y)-t(r-\epsilon) \,\,\,\forall\, y\in B_\delta(a),\,\,\forall\,t\in [0, \delta).$$
where $r=\lambda (a)$
\end{lemma}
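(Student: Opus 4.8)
The goal is a quantitative descent lemma: at a noncritical point $a$, there is a uniform direction $v_0$ along which $f$ decreases at rate close to $r=\lambda(a)$, not only at $a$ but on a whole neighborhood $B_\delta(a)$. The plan is to first unwind the definition of $\lambda(a)$. Since $a\notin K$ we have $r=\lambda(a)>0$, and by Definition 4.8 this means that for every $u\in\partial f(a)$ there is some cell index $(\sigma,w)\in\textbf{F}_a$ with $\|P_{(\sigma,w)}u\|\ge r$. I would select, for the minimizing $u_0\in\partial f(a)$ realizing $\lambda(a)$, a cell $(\sigma_0,w_0)$ achieving the max, and set $v_0$ to be the normalized projection $v_0=P(a)_{(\sigma_0,w_0)}u_0/\|P(a)_{(\sigma_0,w_0)}u_0\|\in T_a(B_{\sigma_0,w_0}(a))\subset T_a(X)$, so $\|v_0\|=1$ automatically.

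Next I would establish the infinitesimal descent at the base point $a$ itself. Using the second property of the pseudo-gradient recorded in the Remark after (4.8), namely $\langle (u_0)_n, \hat P_{(\sigma,w)}u_0\rangle\ge \|(u_0)_n\|^2$, together with the fact that the Clarke directional derivative satisfies $f^\circ(a;-v_0)\le \langle u_0,-v_0\rangle$ (more precisely $f^\circ(a;-v_0)=\max_{u\in\partial f(a)}\langle u,-v_0\rangle$, but the minimax choice of $u_0$ controls this), I would show $\langle u_0, v_0\rangle\ge r$. Hence moving in the $-v_0$ direction decreases $f$ at rate at least $r$ to first order at $a$.

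The third and main step is to upgrade this pointwise infinitesimal estimate to the uniform statement on $B_\delta(a)$ with the $\epsilon$-loss. Here I would invoke the upper semicontinuity of $x\mapsto P(x)\partial f(x)$ proved in Lemma 4.7, together with the upper semicontinuity of the Clarke subdifferential $x\mapsto\partial f(x)$ and the continuity of $p\mapsto P(x)p$. These continuity facts let me choose $\delta$ small enough that for all $y\in B_\delta(a)$ and all $u\in\partial f(y)$ the relevant inner products $\langle u, v_0\rangle$ stay within $\epsilon$ of their values at $a$; combined with the upper bound on $f(y-tv_0)-f(y)$ coming from the Lebourg mean value theorem applied to the locally Lipschitz $f$ along the segment $[y-tv_0,y]$, this yields
\begin{equation*}
f(y-tv_0)-f(y)\le -t\,\langle u^*, v_0\rangle\le -t(r-\epsilon)
\end{equation*}
for a suitable $u^*\in\partial f$ on the segment, valid for all $y\in B_\delta(a)$ and $t\in[0,\delta)$. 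I must also shrink $\delta$ so that the segment $y-tv_0$ remains in the same collection of cells $\textbf{F}_a$ (so that $v_0$ stays tangent and the local-linear structure of $X$ is preserved), which is where the neighborhood decomposition $B_\delta(a)=\bigsqcup B_{\sigma,w}(a)$ from Subsection 4.1 is essential.

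The hardest part will be the uniformization in the third step across the piecewise-linear cell structure: the tangent cone $T_y(X)$ jumps as $y$ passes between cells of different levels, and $v_0$ chosen at $a$ need not lie in $T_y(X)$ for nearby $y$ on a lower-dimensional face. I would handle this by exploiting that $a$ lies in the interior of the top cell $\Gamma_{\iota,m}$ of $\textbf{F}_a$ and that $T_a(B_{\sigma,w}(a))\subseteq T_a(B_{\sigma',w'}(a))$ whenever $\Delta_{\sigma',w'}\succeq\Delta_{\sigma,w}$ (the monotonicity noted after Theorem 4.2), so that a direction tangent at $a$ remains admissible, after possibly restricting $\delta$, on all the higher-level cells meeting $B_\delta(a)$; the remaining control then follows purely from the semicontinuity estimates. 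The $\epsilon$ in the statement is exactly the slack absorbing the variation of the subdifferential and of the projection over the small ball.
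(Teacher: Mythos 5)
Your overall architecture matches the paper's: pick a cell, pick a subgradient, normalize its projection to get $v_0\in T_a(X)$, show the Clarke derivative $f^o(a;-v_0)\le -r$, then pass to a uniform estimate on $B_\delta(a)$. But there is a genuine gap at the central step, which you flag and then wave away with ``the minimax choice of $u_0$ controls this.'' Since $f^o(a;-v_0)=\max_{p\in\partial f(a)}\langle p,-v_0\rangle$, you must show $\langle p, v_0\rangle\ge r$ for \emph{every} $p\in\partial f(a)$, not just for your chosen $u_0$; nothing in the minimax definition of $\lambda(a)$ delivers this by itself, and with your order of choices (first minimize over $u$, then pick the cell maximizing $\|P_{(\sigma,w)}u_0\|$) it can fail, because $u_0$ need not be the minimal-norm point for the projection onto that particular cell, so some other $p$ could have $\langle p,v_0\rangle$ small or negative. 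The paper's mechanism is different and essential: it first fixes a single cell $(\sigma,w)$ with $\min_{p\in\partial f(a)}\|P(a)_{(\sigma,w)}p\|=r$, then takes $p_0$ minimizing the convex quadratic $p\mapsto\Sigma_\alpha d_\alpha^2\langle p,h_\alpha\rangle^2+\Sigma_\beta d_\beta^2\langle p,\psi^w_\beta\rangle^2$ over the closed convex set $\partial f(a)$, and uses the first-order optimality (obtuse-angle) condition
\begin{equation*}
\Sigma_{\alpha\in\iota}d_\alpha^2\langle p,h_\alpha\rangle\langle p_0,h_\alpha\rangle+\Sigma_{\beta\in\sigma}d_\beta^2\langle p,\psi^w_\beta\rangle\langle p_0,\psi^w_\beta\rangle\ \ge\ \|P(a)_{(\sigma,w)}p_0\|^2=r^2\qquad\forall\,p\in\partial f(a),
\end{equation*}
which is exactly the uniform lower bound $\langle p, v_0\rangle\ge r$ with $v_0=\frac{1}{r}P(a)_{(\sigma,w)}p_0$. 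Without this variational inequality your second step does not go through.

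Your third step is a legitimate alternative to the paper's: the paper gets the uniform estimate on $B_\delta(a)\times[0,\delta)$ for free from the definition of $f^o$ as a $\limsup$ over $y\to a$, $t\downarrow 0$, whereas you reconstruct it via Lebourg's mean value theorem plus upper semicontinuity of $\partial f$; that works (conditional on the missing inequality above) but is more machinery than needed. Your concern about $v_0$ remaining tangent on nearby lower-dimensional faces is reasonable for the later use of the lemma in building the flow, but it is not needed for the stated inequality, since $f$ is defined and Lipschitz on all of $\mathbb{R}^n$; the paper does not address it here either.
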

\begin{proof}

Since $a\notin K$, $\lambda (a)>0$. There must be $(\sigma, w)$ such that $r=\lambda (a)=\min_{p\in\partial f(a)}\|P(a)_{(\sigma, w)}p\|.$
Let $v=\Sigma_{\alpha\in \iota}d_\alpha^2 \xi_\alpha h_\alpha+\Sigma_{\beta\in \sigma}d_\beta^2\eta_\beta \psi^w_\beta.$ Then
\begin{align*}
&\lim_{t\downarrow 0, y\to a} t^{-1}[f(y-tv)-f(y)]=f^o(a, -v)\\
&=\max_{p\in \partial f(a)} <p, -v>
=-\min_{p\in \partial f(a)}\Sigma_{\alpha\in \iota}d_\alpha^2\xi_\alpha <p, h_\alpha>+\Sigma_{\beta\in \sigma}d_\beta^2\eta_\beta <p, \psi^w_\beta>
\end{align*}
where $f^o$ is the Clarke's directional derivative.

Now, $\partial f(a)$ is a closed convex set, there exists $p_0\in \partial f(a)$ achieves the minimum $r^2$ of the quadratic function:
$$ \|P(a)_{(\sigma, w)}p\|^2=\Sigma_{\alpha\in \iota} d_\alpha^2<p, h_\alpha>^2+\Sigma_{\beta\in \sigma}d_\beta^2<p, \psi^w_\beta>^2.$$
This implies
$$ \min_{p\in \partial f(a)}\Sigma_{\alpha\in \iota}d_\alpha^2 <p, h_\alpha><p_0, h_\alpha>+\Sigma_{\beta\in \sigma}d_\beta^2<p, \psi^w_\beta><p_0, \psi^w_\beta> \ge \|P(a)_{(\sigma, w)}p_0\|^2=r^2.$$

Let $v_0=\frac{1}{r}\Sigma_{\alpha\in \iota}d_\alpha^2 <p_0, h_\alpha>h_\alpha+\Sigma_{\beta\in \sigma}d_\beta^2<p_0, \psi^w_\beta>\psi^w_\beta.$ Then $\|v_0\|=1$, and

\begin{align*}
&\lim_{t\downarrow 0, y\to a} t^{-1}[f(y-tv_0)-f(y)]\\
&= -\frac{1}{r}\min_{p\in \partial f(a)}\Sigma_{\alpha\in \iota}d_\alpha^2 <p, h_\alpha><p_0, h_\alpha>+\Sigma_{\beta\in \sigma}d_\beta^2<p, \psi^w_\beta><p_0, \psi^w_\beta>\\
&\le -r.
\end{align*}

It follows
$$ f(y-tv_0)\le f(y)-t(r-\epsilon)\,\,\,\,\,\,\forall\, y\in B_\delta(a)\,\forall\, t\in [0, \delta).$$
\end{proof}
\vskip 0.5cm

\textbf{Remark}   In the terminology of weak slope due to Corvellec, Degionanni, Marzocchi\cite{refCDM}, we introduce the mapping $H: B_\delta(a)\times [0, \delta)\to X$ as follow
$$ H(y, t)=y-tv,$$
where $v=v_0$ with $r-\epsilon$ in replacing $r$, then
$$ \| H(y, t)-y\|= t.$$
The weak slope is defined by
$$ |d\tilde{f}|(a)=\sup\{r\in [0, \infty) \,|\, f( H(y, t))\le f(y)-tr\}.$$

In this sense,
$$ |d\tilde{f}|(a)=0\,\,\,\Leftrightarrow\,\,\,\lambda(a)=0.$$

\vskip 0.5cm

Based on the function $\lambda,$ by standard procedure we can construct a pseudo-gradient vector field $v$ and the associate pseudo-gradient flow $\eta: (X\backslash K)\times R^1 \to X $ with respect to $\tilde{f}$ on $X$, see Chang\cite{refC1}, \cite{refC2}, Rabinowitz\cite{refR}. Then the following deformation theorem holds.

\begin{theorem}(Deformation) Let $c\in \mathbb{R}^1,\, K_c=K\cap \tilde{f}^{-1}(c)$, and $N\subset X$ is a neighborhood of $K_c,$ then $\forall\, \epsilon_0>0,\,\,\exists\, \epsilon\in (0, \epsilon_0)$, and a deformation $\eta: [0,1]\times X\to X$ satisfying
\begin{enumerate}
\item $\eta(0, x)=x\,\,\,\,\forall\, x\in X$,\\
\item $\eta(t, x)=x\,\,\,\,\forall\, x\notin \tilde{f}^{-1}[c-\epsilon_0, c+\epsilon_0]$,\\
\item $ \forall\, t\in[0,1], \eta(t, \cdot): X\to X$ is a homeomorphism,\\
\item $\eta(1, \tilde{f}_{c+\epsilon}\backslash N) \subset \tilde{f}_{c-\epsilon}$, where $\tilde{f}_b $ is the level set of $\tilde{f}$ below or equal to $b$,\\
\item If $K_c=\emptyset$, then $\eta(\tilde{f}_{c+\epsilon}) \subset \tilde{f}_{c-\epsilon}$.
\end{enumerate}
\end{theorem}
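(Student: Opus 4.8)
The plan is to reduce the whole statement to the local descent direction furnished by the preceding lemma and then run the classical negative pseudo-gradient flow construction (as in Chang \cite{refC1}\cite{refC2} and Rabinowitz \cite{refR}), paying attention only to the fact that $X$ is the piecewise linear manifold $\{x\,|\,\sum_i d_i|x_i|=1\}$. A decisive simplification is that this $X$ is compact, being the boundary of a cross-polytope; consequently no Palais--Smale condition must be imposed, every open cover of a closed subset has a finite subcover, and $K_c$ is compact (the critical set $K$ is closed by the upper semicontinuity of $x\mapsto P(x)\partial f(x)$). The first step I would carry out is to produce a uniform descent rate off $N$: there exist $\epsilon_1\in(0,\epsilon_0)$ and $r_0>0$ with $\lambda(a)\ge r_0$ for every $a$ in the slab $\tilde f^{-1}[c-\epsilon_1,c+\epsilon_1]$ lying outside $N$. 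Were this false, compactness would yield $a_k\to a$ with $f(a_k)\to c$, $a_k\notin N$ and $\lambda(a_k)\to 0$; the lower semicontinuity of $\lambda$ (itself a consequence of the upper semicontinuity of the projected subdifferential) would force $\lambda(a)=0$, so $a\in K_c\subset N$, contradicting $a\notin N$ since $N$ is open.

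Next I would assemble an admissible pseudo-gradient field on this slab. To each point $a$ of the compact set $\tilde f^{-1}[c-\epsilon_1,c+\epsilon_1]\setminus N$ the preceding lemma attaches a unit tangent vector $v_0=v_0(a)\in T_a(X)$ and a ball $B_{\delta_a}(a)$ on which $f(y-tv_0)\le f(y)-t\,r_0/2$. Covering the compact set by finitely many such balls, choosing a Lipschitz partition of unity $\{\phi_j\}$ subordinate to the cover, and setting $W(y)=\sum_j\phi_j(y)\,v_0^{(j)}$, I obtain a locally Lipschitz field which I then truncate by a Lipschitz cutoff $g$ that equals $1$ on $\tilde f^{-1}[c-\epsilon,c+\epsilon]\setminus N$ and vanishes off $\tilde f^{-1}[c-\epsilon_0,c+\epsilon_0]$. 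The delicate requirement is that $W(y)$ remain in the tangent cone $T_y(X)$; this is exactly where the nesting $T_a(B_{\sigma,w}(a))\subseteq T_a(B_{\sigma',w'}(a))$ recorded after the tangent-space theorem is used, since it guarantees that a descent direction built on a lower dimensional cell stays admissible on every higher cell meeting the same neighborhood, so that each $v_0^{(j)}$, and their convex combination taken inside a single top piece $B_{\sigma,w}(y)$, is tangent at $y$. Together with $\|W\|\le 1$ and $\langle p,W(y)\rangle\ge r_0/2$ for the relevant $p\in\partial f(y)$, this makes $-gW$ a genuine descent field.

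I would then integrate $\dot\eta=-g(\eta)\,W(\eta)$. Because the field is locally Lipschitz and tangent, the flow exists for all time, stays on $X$, and each time map is a homeomorphism with the backward flow as inverse; rescaling the flow time into $[0,1]$ produces $\eta:[0,1]\times X\to X$ with $\eta(0,\cdot)=\mathrm{id}$, which is (1), and gives (3). Property (2) holds because $g$, hence the field, vanishes off $\tilde f^{-1}[c-\epsilon_0,c+\epsilon_0]$, so such points are fixed. For (4), along an orbit starting in $\tilde f_{c+\epsilon}\setminus N$ one has $\frac{d}{dt}f(\eta)\le -r_0/2$ wherever $g=1$; since $f$ is nonincreasing along orbits, choosing $\epsilon$ smaller than both $\epsilon_1$ and $r_0/4$ forces $f$ to fall from $\le c+\epsilon$ to $\le c-\epsilon$ within unit time before the orbit can leave the region $g=1$, giving $\eta(1,\tilde f_{c+\epsilon}\setminus N)\subset\tilde f_{c-\epsilon}$. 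Taking $N=\emptyset$ throughout when $K_c=\emptyset$ yields (5).

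The hard part will be the construction of the field in the second step: producing a simultaneously tangent, locally Lipschitz and descending vector field across strata $\Gamma_{\iota,m}$ of differing dimensions. The two genuine obstructions are that the tangent cone $T_y(X)$ is a union of convex cones and is not itself convex, so a naive convex combination of tangent vectors attached at nearby points of different cells need not be tangent, and that $P(a)(-p)\ne -P(a)p$ in general, so the descent direction is intrinsically one-sided. Both are controlled by assigning to each $y$ the projection onto the top piece of its own cone and by the tangent-cone nesting above; but verifying that the patched field is tangent, Lipschitz and descending uniformly near the lower dimensional faces, where the stratum dimension jumps, is where the real bookkeeping lies.
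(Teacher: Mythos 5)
Your proposal is correct and follows essentially the same route as the paper: the paper gives no detailed argument here but appeals to the standard pseudo-gradient construction of Chang and Rabinowitz built on the descent direction of Lemma 4.8, which is exactly what you carry out, with the compactness of $X$ replacing any Palais--Smale condition and the tangent-cone nesting $T_a(B_{\sigma,w}(a))\subseteq T_a(B_{\sigma',w'}(a))$ handling the stratification. The points you single out as delicate (tangency and Lipschitz continuity of the patched field across strata of different dimension, and the one-sidedness $P(a)(-p)\neq -P(a)p$) are precisely the ones the paper leaves implicit in its citation of the ``standard procedure.''
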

Liusternik- Schnirelmann theory is applied to study the multiplicity of the critical points for even functions on the the symmetric piecewise linear manifold $X$. We use the genus version of the theorem due to Krasnoselski, see Chang\cite{refC2}, and Rabinowitz\cite{refR}. Let $A\subset\mathbb{R}^n$ be a symmetric set, i.e., $-A=A,$ satisfying $\theta\notin A$. An integer valued function, which is called the genus of $A$, $\gamma: A \to Z_+\cup\{+\infty\}$ is defined:
\begin{equation*} \gamma(A)=\left\{\begin{array} {l}
0\,\,\,\,\,\,\,\mbox{if}\,\,A=\emptyset,\\
min\{k\in Z_+\,|\, \exists\,\mbox{odd continuous}\,h: A\to S^{k-1}\}
\end{array}\right.
\end{equation*}
Genus is a topological invariant.
\vskip 0.5cm

\begin{theorem} Suppose that $f$ is a locally Lipschitzian even function on $\mathbb{R}^n$, then
\begin{equation*}
c_k=\inf_{\gamma(A)\ge k} Max_{x\in A}\tilde{f}(x)
\end{equation*}
are critical values of $\tilde{f}, \,k=1,2, \cdots n.$ They satisfy
$$c_1\le c_2 \le \cdots \le c_n.$$

Moreover, if
\begin{equation*}
c=c_{k+1}=\cdots =c_{k+l},\,\, 0\le k\le k+l \le n,
\end{equation*}
then $\gamma(K_c)\ge l.$
\end{theorem}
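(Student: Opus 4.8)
The plan is to follow the standard Liusternik--Schnirelmann minimax machinery, adapted to the nonsmooth piecewise-linear setting set up in the preceding subsections. The crucial ingredients already available are the Deformation Theorem (Theorem 4.9), the genus $\gamma$ with its monotonicity and subadditivity properties, and the fact that $K$ is closed (Lemma 4.6). First I would verify that the minimax values $c_k$ are well defined and finite: since $X$ is compact and $f=I$ is continuous, $\max_{x\in A}\tilde f(x)$ exists for every nonempty symmetric $A$, and the class $\{A\subset X\,|\,A \text{ symmetric},\, \gamma(A)\ge k\}$ is nonempty for $1\le k\le n$ because $X$ itself is homeomorphic (via an odd map) to a set of genus $n$. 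Monotonicity of the genus under inclusion immediately gives the ordering $c_1\le c_2\le\cdots\le c_n$, since enlarging $k$ shrinks the admissible family over which we take the infimum.

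Next I would establish that each $c_k$ is a critical value. The argument is by contradiction: suppose $c=c_k$ is regular, i.e.\ $K_c=\emptyset$. Then conclusion (5) of the Deformation Theorem produces, for suitable $\epsilon>0$, a deformation $\eta$ with $\eta(1,\cdot)$ an odd homeomorphism (oddness must be arranged in the construction of the pseudo-gradient flow, using that $f$ is even and $X$ symmetric) carrying $\tilde f_{c+\epsilon}$ into $\tilde f_{c-\epsilon}$. Choosing an admissible $A$ with $\gamma(A)\ge k$ and $\max_A\tilde f< c+\epsilon$, the set $A'=\eta(1,A)$ is symmetric, satisfies $\gamma(A')\ge\gamma(A)\ge k$ by the topological invariance of genus under odd homeomorphisms, yet $\max_{A'}\tilde f\le c-\epsilon$. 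This contradicts the definition of $c_k$ as an infimum equal to $c$, so $K_c\ne\emptyset$ and $c_k$ is critical.

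For the multiplicity statement, assume $c=c_{k+1}=\cdots=c_{k+l}$ and suppose for contradiction that $\gamma(K_c)\le l-1$. Since $K_c$ is closed and symmetric (note $\theta\notin X$, so $\theta\notin K_c$ causes no difficulty), by continuity of the genus there is a symmetric neighborhood $N$ of $K_c$ with $\gamma(\overline N)=\gamma(K_c)\le l-1$. Applying the Deformation Theorem with this $N$ yields $\eta$ with $\eta(1,\tilde f_{c+\epsilon}\setminus N)\subset\tilde f_{c-\epsilon}$. Taking $A$ with $\gamma(A)\ge k+l$ and $\max_A\tilde f<c+\epsilon$, the subadditivity property $\gamma(A)\le\gamma(\overline{A\setminus N})+\gamma(\overline N)$ forces $\gamma(\overline{A\setminus N})\ge k+l-(l-1)=k+1$; deforming $\overline{A\setminus N}$ down shows $c_{k+1}\le c-\epsilon<c$, contradicting $c_{k+1}=c$. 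Hence $\gamma(K_c)\ge l$.

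The main obstacle I anticipate is not the combinatorial minimax bookkeeping, which is classical, but ensuring that the deformation $\eta$ can be chosen \emph{odd}, and more fundamentally that the nonsmooth pseudo-gradient flow built in the previous subsection genuinely yields a deformation satisfying all five properties of Theorem 4.9 on the piecewise-linear manifold $X$. In particular, the asymmetry noted after Definition 4.8, namely $P(a)(-p)\ne -P(a)p$, means the flow is not automatically equivariant, so the odd construction must be handled with care by symmetrizing the vector field; this is the delicate point on which the whole genus argument rests. Since Theorem 4.9 is assumed, I would invoke it directly, but I would flag that its proof must deliver oddness, and I would cite Chang and Rabinowitz for the standard symmetrization of the pseudo-gradient field.
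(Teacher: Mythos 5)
Your proposal is correct and follows the standard Liusternik--Schnirelmann minimax argument; the paper itself gives no proof of this theorem, merely citing Chang and Rabinowitz for exactly this genus-based deformation scheme, so your route coincides with the one the paper intends the reader to supply. Your remark that the delicate point is securing an \emph{odd} deformation --- given that the projection satisfies $P(a)(-p)\neq -P(a)p$ --- is a legitimate caveat that the paper glosses over, and flagging it is appropriate.
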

\vskip 0.5cm

A critical value $c$ is said of multiplicity $l$, if $\gamma(K_c)=l.$

Thus, we have
\begin{theorem} There are at least $n$ critical points $\phi_k,\,\,k=1,2, \cdots, n$ of $\tilde{I}$ such that
$\phi_k\in K_{c_k}$. Moreover, counting multiplicity, $\tilde{I}$ has at least $n$ critical values.
\end{theorem}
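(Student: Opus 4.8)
The plan is to combine the abstract Liusternik--Schnirelmann multiplicity result (Theorem 4.9, the immediately preceding statement) with the fact that the energy $I$ of (1.9) is an even, locally Lipschitzian function on $\mathbb{R}^n$, restricted to the symmetric manifold $X$ of (1.7). The claim is essentially a specialization of Theorem 4.9 to $f=I$, so the only genuine work is to verify that the general hypotheses are met in our concrete situation and that the index $n$ is the right ceiling.

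First I would check the structural hypotheses needed to invoke Theorem 4.9. The function $I(x)=\Sigma_{i=1}^n\Sigma_{j\sim i}|x_i-x_j|$ is manifestly even, being a sum of absolute values of linear forms, and is Lipschitz on all of $\mathbb{R}^n$; hence its Clarke subdifferential is well defined and the machinery of Subsections 4.3--4.4 (the projections $P(a)$, the deformation Theorem 4.8, and Theorem 4.9) applies verbatim. The constraint set $X$ is symmetric ($-X=X$) and does not contain the origin $\theta$, so the genus $\gamma$ is defined on its symmetric subsets. I would then define the minimax values
\begin{equation*}
c_k=\inf_{\gamma(A)\ge k}\ \max_{x\in A}\tilde{I}(x),\qquad k=1,2,\cdots,n,
\end{equation*}
exactly as in Theorem 4.9 with $f=I$, and conclude directly that each $c_k$ is a critical value of $\tilde I$ with a corresponding critical point $\phi_k\in K_{c_k}$, and that $c_1\le c_2\le\cdots\le c_n$.

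The one point that requires attention is why the index runs all the way up to $n$ and not merely to some smaller number: this rests on the genus of the whole manifold $X$ being at least $n$. I would verify $\gamma(X)\ge n$ by exhibiting an odd continuous map $X\to S^{n-1}$, for instance the normalization $x\mapsto x/\|x\|_2$, which is odd and continuous and lands in $S^{n-1}$ because $0\notin X$; since $X$ is homeomorphic via this map onto a symmetric subset of $S^{n-1}$ that is not contained in any lower-dimensional sphere, its genus equals $n$. This guarantees that the collection of admissible symmetric sets $A$ with $\gamma(A)\ge k$ is nonempty for every $k\le n$, so all $n$ values $c_k$ are finite and well defined.

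Finally, the multiplicity count follows from the second half of Theorem 4.9: whenever a value coincides, $c=c_{k+1}=\cdots=c_{k+l}$, one has $\gamma(K_c)\ge l$, so that even when several $c_k$ collapse to a single critical value, the genus of the corresponding critical set absorbs the multiplicity. Summing these contributions over the distinct critical values recovers a total of $n$ when counted with multiplicity. The main obstacle I anticipate is not analytic but bookkeeping: one must be careful that the genus lower bound $\gamma(X)\ge n$ is sharp enough to license all $n$ levels, and that the identification of critical points of $\tilde I$ with eigenvectors of $\Delta_1$ (via Theorem 4.11, which bridges $K$ and $S(G)$) is invoked correctly so that these $n$ critical values are genuinely eigenvalues. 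Once the genus computation is in place, the rest is a direct transcription of the abstract theorem to the pair $(I,X)$.
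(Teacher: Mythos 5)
Your proposal is correct and follows essentially the same route as the paper, which states this theorem as an immediate specialization of Theorem 4.9 to $f=I$ (even, globally Lipschitz) on the symmetric set $X$ with $\theta\notin X$, together with Theorem 4.11 to identify the critical points as eigenvectors. One small caution on the genus step: exhibiting an odd continuous map $X\to S^{n-1}$ by itself only gives $\gamma(X)\le n$; the lower bound $\gamma(X)\ge n$ comes from the fact that the radial normalization is an \emph{odd homeomorphism} of $X$ onto all of $S^{n-1}$ (since $X$ is the boundary of a convex neighborhood of the origin, assuming no isolated vertices), so that topological invariance of the genus and $\gamma(S^{n-1})=n$ (Borsuk--Ulam) yield $\gamma(X)=n$.
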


\vskip 0.5cm
\subsection{Connection between $K$ and $S(G)$}

\begin{theorem} The critical set $K$ of $I$ is the same with the set of all eigenvectors $S(G)$ on $G$, i.e., $K=S(G)$.
\end{theorem}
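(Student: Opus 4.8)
The plan is to prove the two inclusions $K\subseteq S(G)$ and $S(G)\subseteq K$ by translating the abstract criticality condition of Definition 4.4 into the eigen-system (2.3), using as a dictionary the algebraic description of vanishing projections in Theorem 4.2. The bridge between the analytic and the combinatorial side is the identity $\partial I(a)=\Delta_1 a$ furnished by Theorem 2.1: a covector $p$ lies in $\partial I(a)$ exactly when $p=B^Tz$ for some $z$ with $z_e(Ba)_e=|(Ba)_e|$, which is precisely membership in $\Delta_1 a$. So throughout I would silently replace $\partial I(a)$ by $\Delta_1 a$.

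First I would fix $a\in\Gamma_{\iota,m}\subset S_{k-1}$, so that the support of $a$ is $\iota$ with $\sgn(a_\alpha)=m_\alpha$ for $\alpha\in\iota$, while $a_\beta=0$ for $\beta\notin\iota$. By Definition 4.4, together with the formula for $\lambda(a)$ in Definition 4.6, the condition $a\in K$ means there is $p\in\partial I(a)$ with $P(a)_{(\sigma,w)}p=0$ for \emph{all} $(\sigma,w)$. Theorem 4.2 then says this holds iff
$$ p=\lambda\Bigl(\sum_{\alpha\in\iota}m_\alpha d_\alpha \mathbf{e}_\alpha^{*}+\sum_{\beta\notin\iota}c_\beta d_\beta \mathbf{e}_\beta^{*}\Bigr),\qquad \lambda\ge 0,\ |c_\beta|\le 1 . $$
The key observation is that, under the identification of $\mathbf{e}_i^{*}$ with $\mathbf{e}_i$, the right-hand side equals $\lambda D v$ where $v_\alpha=m_\alpha=\sgn(a_\alpha)$ on the support and $v_\beta=c_\beta\in[-1,1]$ off the support; that is, $v\in\Sgn(a)$ componentwise. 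Hence $P(a)p=0$ for some $p\in\partial I(a)$ is equivalent to $\Delta_1 a\cap\lambda D\,\Sgn(a)\neq\emptyset$ for some $\lambda\ge 0$, which is exactly the eigen-system (2.3) with $\mu=\lambda$. This gives $K\subseteq S(G)$.

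For the reverse inclusion I would run the same chain backwards. Given $a\in S(G)$ with eigenvalue $\mu$, there is $p\in\Delta_1 a=\partial I(a)$ with $p\in\mu D\,\Sgn(a)$; writing $p=\mu D v$ with $v\in\Sgn(a)$, the forced values $v_\alpha=m_\alpha$ on $\iota$ and the free values $c_\beta=v_\beta\in[-1,1]$ off $\iota$ put $p$ into precisely the normal form required by Theorem 4.2. The one point needing care is the sign constraint $\lambda\ge 0$ in that theorem: I would supply it from Corollary 2.5, which gives $\mu=I(a)\ge 0$ for any eigenpair, so $\lambda=\mu\ge 0$ is legitimate. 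Applying the ``$\Leftarrow$'' direction of Theorem 4.2 then yields $P(a)_{(\sigma,w)}p=0$ for all $(\sigma,w)$, i.e. $a\in K$.

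I do not expect a serious obstacle, since the heavy lifting has already been done in Theorems 2.1 and 4.2; the argument is essentially a dictionary between two characterizations of the same covector. The only place demanding attention is the bookkeeping of the dual pairing: one must verify that the covector form of Theorem 4.2 genuinely coincides with $\lambda D\,\Sgn(a)$, and that the distinguished reference index $i_1$ used to build the $h_\alpha$ produces no spurious linear constraint, so that the selection $v\in\Sgn(a)$ is free exactly on the null set $\{\beta\notin\iota\}=D^0$ and pinned to $\pm1$ on the support. Matching $\lambda\ge 0$ with the nonnegativity of $\mu$ via Corollary 2.5 then closes the loop and yields $K=S(G)$.
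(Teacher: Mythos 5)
Your proposal is correct and follows essentially the same route as the paper: identify $\partial I(a)$ with $\Delta_1 a$ via Theorem 2.1, characterize $P(a)p=0$ via Theorem 4.2, and read the resulting normal form $\lambda\bigl(n_0+\sum_{\beta\notin\iota}c_\beta d_\beta \mathbf{e}_\beta^*\bigr)$ as membership in $\lambda D\,\Sgn(a)$. Your explicit handling of the sign constraint $\lambda\ge 0$ via Corollary 2.5 in the reverse inclusion is a small point the paper passes over silently, but otherwise the two arguments coincide.
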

\begin{proof} Let $a\in \Gamma_{\iota, m}.$ $a=(a_1, \cdots, a_n)\in K$ if and only if $\exists\, p\in \partial I(a)$ such that $P(a)p=0$. However, $p\in \partial I(a)$ means
that $\exists\, z_{ij}\in Sgn(x_i-x_j),\,z_{ji}=-z_{ij}$ such that $p_i=\Sigma_{j\sim i}x_{ij},\,\forall\,i.$

According to Theorem 4.2, $P(a)p=0$ means that $ \exists\, c_i$ with $|c_i|\le 1$, such that
$$ p=\mu(n_0+\Sigma_{j_\beta\notin \iota} c_\beta d_{j_\beta} \textbf{e}_{j_\beta}),$$
with
$$\mu=\frac{(p, n_0)}{\|n_0\|^2}.$$
i.e.,
\begin{equation*}\Sigma_{j\sim i}z_{ij}(a)= \left\{\begin{array} {l}
\mu d_i sgn (a_i),\,\,\,\,\,\mbox{if}\,i\in \iota, \\
\mu d_i c_i,\,\,\,\,\,\mbox{if}\,i\notin \iota.
\end{array}\right.
\end{equation*}

 By Definition 2.3, for $a\in S(G)$ if and only if
\begin{equation*}\Sigma_{j\sim i}z_{ij}(a)\in \mu d_iSgn(a_i).
\end{equation*}
Therefore, $K=S(G)$.
\end{proof}
\vskip 0.5cm

\begin{definition} An eigenvalue $\mu$ of (1.8) is of multiplicity $l$, if $\gamma(S(G)\bigcap I_\mu)=l.$
\end{definition}

Following Corollary 2.5, Theorem 4.10 and Theorem 4.11, we immediately obtain

\begin{theorem} $\forall\, \mu\in [0, 1], \, K_\mu=S(G)\cap I_\mu$, i.e., the critical set with critical value $\mu$ is  the set of eigenvectors with eigenvalue $\mu$. Consequently, there are at least $n$ eigenvectors $\phi_k$ of the eigenvector system such that
$$I(\phi_k)=c_k,$$
where
\begin{equation}
c_k= \inf_{\gamma(A)\ge k} Max_{x\in A}\tilde{I}(x),\,\,\,k=1,2, \cdots, n.
\end{equation}
Moreover, if $\mu=c$ and (4.11) holds, then the multiplicity of $\mu$ is greater than or equal to $l$,
and then counting multiplicity, (1.8) has at least $n$ eigenvalues.
\end{theorem}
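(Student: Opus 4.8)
The plan is to assemble this statement from three ingredients already established: the Euler identity of Corollary 2.5, the identification $K=S(G)$ of Theorem 4.11, and the nonsmooth Liusternik--Schnirelmann theorem, Theorem 4.10. None of these requires new machinery, so the proof should be essentially a bookkeeping argument that threads them together.

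First I would prove the level-set identity $K_\mu = S(G)\cap I_\mu$. By Definition 4.3 we have $K_\mu = K\cap \tilde I^{-1}(\mu)$, and Theorem 4.11 gives $K=S(G)$, so $K_\mu = S(G)\cap \tilde I^{-1}(\mu)$. It then suffices to observe that intersecting $S(G)$ with the level set $\{I=\mu\}$ selects exactly the eigenvectors whose eigenvalue is $\mu$: this is precisely Corollary 2.5, which says every eigenpair $(\nu,x)$ satisfies $I(x)=\nu$, so an eigenvector lies in $\{I=\mu\}$ if and only if its eigenvalue equals $\mu$. This settles the first assertion and records $K_\mu=S(G)\cap I_\mu$ for later use.

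Next I would apply Theorem 4.10 with $f=I$. Here I must check the hypotheses. The function $I(x)=\sum_{e\in E}|(Bx)_e|$ is a finite sum of absolute values of linear forms, hence globally Lipschitz, and it is even because $I(-x)=I(x)$; the manifold $X$ is symmetric, $-X=X$. The set $X$ is the weighted $\ell^1$ unit sphere, hence compact and homeomorphic to $S^{n-1}$, so $\gamma(X)=n$ and the minimax classes $\{A\,|\,\gamma(A)\ge k\}$ are nonempty for $k=1,\dots,n$. Consequently each $c_k=\inf_{\gamma(A)\ge k}\max_{x\in A}\tilde I(x)$ is a well-defined finite critical value, and Theorem 4.10 produces critical points $\phi_k\in K_{c_k}$ with $c_1\le\cdots\le c_n$. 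By the first step every such $\phi_k$ is an eigenvector, and Corollary 2.5 gives that its eigenvalue is $I(\phi_k)=c_k$, which yields the claimed $n$ eigenvectors.

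For the multiplicity clause, when $\mu=c=c_{k+1}=\cdots=c_{k+l}$ the last part of Theorem 4.10 gives $\gamma(K_c)\ge l$; since $K_c=S(G)\cap I_\mu$ by the first step, Definition 4.12 says $\mu$ has multiplicity at least $l$, and summing multiplicities over the distinct values among $c_1\le\cdots\le c_n$ shows that, counted with multiplicity, $(1.8)$ has at least $n$ eigenvalues. I expect the only nonroutine point to be confirming that the abstract apparatus of Section 4---the deformation Theorem 4.9 built from the projected pseudo-gradient $P(a)\partial I(a)$---genuinely applies to $\tilde I$ on the piecewise linear, nonsmooth manifold $X$; concretely, that the compactness of $X$ supplies the Palais--Smale--type condition so the pseudo-gradient flow stays in $X$ and the minimax values $c_k$ are attained. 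Everything else is direct substitution of the earlier results.
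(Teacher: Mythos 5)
Your proposal is correct and follows exactly the route the paper intends: the paper states Theorem 4.13 without proof, saying only that it follows "immediately" from Corollary 2.5, Theorem 4.10 and Theorem 4.11, and your argument is precisely that assembly, with the hypothesis checks (evenness and Lipschitz continuity of $I$, symmetry and compactness of $X$, $\gamma(X)=n$) filled in. Nothing in your write-up deviates from or goes beyond what the paper's cited results supply.
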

The above theorem can be seen as the counterpart of the multiplicity theorem in linear spectral theory.
\vskip 0.5cm

In this sense, the set of eigenvectors associate to an eigenvalue of multiplicity 1 may not be a single vector, but a symmetric set of eigenvectors with genus 1.

In contrast to the linear spectral theory of graphs, the system (2.4) is nonlinear, neither algebraic, there is no way to define the algebraic multiplicity for eigenvalues. The above defined multiplicity is geometric or topological.
\vskip 0.5cm

\textbf{Remark}

Following Theorem 3.9, Theorem 4.11, and the fact that there are totally $3^n$ closed cells for a graph $G$ with $n$ vertices, we conclude: the spectrum for $\Delta_1$ on graphs is discrete, because the number of distinct eigenvalues is no more than $\frac{3^n}{2}$.

Now, we can arrange eigenvalues of $\Delta_1(G)$ in increasing order:
$$ 0=\mu_1\le \mu_2\le \cdots \le 1.$$
\vskip 0.5cm

\textbf{Question} We have also arranged some critical values of $\tilde{I}$ in increasing order as in Theorem 4.13.

According to Theorem 4.11, the set of critical values of $\tilde{I}$ is the same as that of eigenvalues of $\Delta_1(G)$, then we ask:
Is there any eigenvalue $\mu$, which is not in the sequence: $\{c_1, c_2, \cdots, c_n\}$?

\section{Further results on eigenvalues and eigenvectors}

\subsection{Elementary facts}

Let $x$ be an eigenvector of $\Delta_1(G)$ with eigenvalue $\mu$.
\begin{theorem}
For $\mu=1$ if and only if any nodal domain of $x$ consists of a single vertex.
\end{theorem}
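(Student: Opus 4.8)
The plan is to reduce the statement to the sharp form of the inequality already exploited in the proof of Theorem 2.8, combined with the identity $\mu = I(x)$ from Corollary 2.5. Recall that for an eigenpair $(\mu, x)$ one has
$$\mu = I(x) = \sum_{e=(i,j)\in E} |x_i - x_j| \le \sum_{e=(i,j)\in E}(|x_i|+|x_j|) = \sum_{i=1}^n d_i |x_i| = 1,$$
so $\mu = 1$ holds precisely when the first inequality is an equality, that is, when $|x_i - x_j| = |x_i| + |x_j|$ for \emph{every} edge $(i,j)\in E$. Since $|a-b| = |a|+|b|$ if and only if $ab \le 0$, the condition $\mu = 1$ is equivalent to the purely combinatorial statement that $x_i x_j \le 0$ on every edge of $G$.

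Next I would translate this edge condition into a statement about nodal domains. The condition $x_i x_j \le 0$ on an edge $(i,j)$ fails exactly when $x_i$ and $x_j$ have the same nonzero sign, i.e. when both endpoints lie in $D^+$ or both in $D^-$. Hence ``$x_i x_j \le 0$ for all edges'' is equivalent to saying that no edge is internal to the subgraph induced by $D^+$, nor to that induced by $D^-$; equivalently, these induced subgraphs are edgeless, so that each of their connected components --- that is, each $\pm$ nodal domain --- is a single vertex. Edges incident to $D^0$ are harmless here, since then one endpoint has $x$-value $0$ and the product $x_i x_j$ vanishes automatically.

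Concretely, for the forward implication I would argue contrapositively: if some nodal domain, say $D^+_\alpha$, contains at least two vertices, then, being a connected component of the induced subgraph, it contains an internal edge $(i,j)$ with $x_i, x_j > 0$, whence $x_i x_j > 0$ and the first inequality above is strict, giving $\mu < 1$. For the converse, if every nodal domain is a single vertex then every edge either joins vertices of opposite sign or touches $D^0$, so $x_i x_j \le 0$ throughout and the chain of inequalities collapses to $I(x) = 1$, whence $\mu = 1$ by Corollary 2.5.

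I do not expect a genuine obstacle in this argument; it is essentially the equality-case analysis of a triangle inequality dressed in graph language. The only points requiring care are matching the edge-counting normalization so that $\sum_{e}(|x_i|+|x_j|) = \sum_i d_i|x_i| = 1$ exactly, and keeping the role of the null set $D^0$ separate from the nodal domains, since (per the definition of nodal domains in Section 3.1) $D^0$ is not itself counted among them and its incident edges must be checked not to obstruct the equivalence.
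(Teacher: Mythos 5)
Your proof is correct, but it follows a genuinely different route from the paper's. The paper argues directly on the eigenvalue system: for the forward direction it observes that $\mu=1$ together with $\sum_{j\sim i}z_{ij}=d_i\,\mathrm{sgn}(x_i)$ and $|z_{ij}|\le 1$ forces $z_{ij}=\mathrm{sgn}(x_i)$ for every $j\sim i$ at every nonzero vertex, and then derives a contradiction from the antisymmetry $z_{ij}=-z_{ji}$ on any edge internal to a nodal domain; for the converse it uses the nodal-domain identity $(3.6)$, $p^{+}_\alpha=\mu\,\delta^{+}_\alpha$, applied to a singleton domain. You instead never touch the adjacency coefficients $z_{ij}$: you run the equality case of the estimate $I(x)\le 1$ from Theorem 2.8, using only Corollary 2.5 and the elementary fact that $|a-b|=|a|+|b|$ iff $ab\le 0$, and you correctly note that $\sum_{e=(i,j)}\bigl(|x_i|+|x_j|\bigr)=\sum_i d_i|x_i|$ is an exact identity (the paper writes it as an inequality, but equality is what makes your argument close). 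Your version is more elementary and handles both implications through a single equivalence ($\mu=1$ iff $x_ix_j\le 0$ on every edge), and your handling of edges meeting $D^0$ is right. What the paper's argument buys in exchange is the extra structural information that when $\mu=1$ all coefficients $z_{ij}$ at nonzero vertices equal $\mathrm{sgn}(x_i)$, and that a \emph{single} singleton nodal domain already forces $\mu=1$ --- a slightly stronger converse that your equality analysis does not directly give, though it is not needed for the statement as written.
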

\begin{proof}
$"\Rightarrow"$ Assume $\mu=1$

1. Suppose $x_i\neq 0$ for some $i.$ Look at the equation:
$$ \Sigma_{j\sim i} z_{ij} =d_i sgn (x_i).$$
Since $|z_{ij}|\le 1$, and $d_i$ is the number of $j$, which is adjacent to $i$, i.e., $j\sim i,$ we have
$$ z_{ij}=sgn (x_i),\,\,\,\,\,\,\forall\, j\sim i.$$
2. Suppose there is a nodal domain $D^+_\alpha$ (similarly $D^-_\beta$),which consists of more than one vertices. Say, there are $i$ and $j$ with $j\sim i.$ According to the previous conclusion, we have
$$ z_{ij}=sgn (x_i) =1,$$
and
$$ z_{ji}=sgn (x_j) =1.$$
But,
$$z_{ij}=-z_{ji}.$$
This is a contradiction.
\vskip 0.5cm

$"\Leftarrow"$  Suppose $D^+_\alpha$ consists of a single vertex. According to $(3.6)$,
$$ p^+_\alpha=\mu \delta^+_\alpha.$$
By the assumption, there is no inter-domain connection in $D^+_\alpha$, it must be
$$ p^+_\alpha= \delta^+_\alpha.$$
Therefore $\mu=1.$
\end{proof}

\vskip 0.5cm

\begin{theorem}
For $\mu<1$ if and only if any nodal domain of $x$ contains at least a pair of adjacent vertices.
\end{theorem}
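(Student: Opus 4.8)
The plan is to obtain this statement as the exact logical complement of Theorem 5.1. First I would record two translations. By Theorem 2.8 every eigenvalue obeys $0\le\mu\le1$, so the hypothesis $\mu<1$ is the same as $\mu\neq1$; this lets me pass freely between the strict inequality and the negation of $\mu=1$. Second, since a nodal domain is by definition the vertex set of a connected component of the subgraph induced by $D^+$ or $D^-$, a nodal domain fails to be a single vertex precisely when it is connected on two or more vertices, i.e. precisely when it contains at least one edge, a pair of adjacent vertices. Hence ``some nodal domain of $x$ contains a pair of adjacent vertices'' is literally the negation of ``every nodal domain of $x$ consists of a single vertex.'' With these identifications the assertion is nothing but the contrapositive of Theorem 5.1: negating both sides of $\mu=1\Leftrightarrow(\text{every nodal domain is a single vertex})$ and using Theorem 2.8 to rewrite $\mu\neq1$ as $\mu<1$ yields exactly the claim. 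So the shortest proof is a one-line appeal to Theorem 5.1 together with Theorem 2.8.

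If a self-contained argument is preferred, I would run the two implications directly, each collapsing to a computation already carried out in Theorem 5.1. For one direction, assume a nodal domain contains adjacent $i\sim j$ and suppose toward a contradiction that $\mu=1$; the eigenvector equation $\Sigma_{j\sim i}z_{ij}=d_i\operatorname{sgn}(x_i)$ with $|z_{ij}|\le1$ forces $z_{ij}=\operatorname{sgn}(x_i)$ on every edge at $i$ and $z_{ji}=\operatorname{sgn}(x_j)$ at $j$, and since $i,j$ share a domain $\operatorname{sgn}(x_i)=\operatorname{sgn}(x_j)$, so $z_{ij}=-z_{ji}$ gives $1=-1$. For the other direction, assume every nodal domain is a single vertex; then every edge at a vertex $i\in D^+_\alpha$ leaves its domain, the adjacency rules of Section 3 give $x_j<x_i$ and hence $z_{ij}=1$ for each neighbour, and summing through $(3.6)$ yields $\delta^+_\alpha=p^+_\alpha=\mu\delta^+_\alpha$, whence $\mu=1$, i.e. $\mu\not<1$.

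The only point that genuinely needs care — and the step I would watch most closely — is the quantifier. The geometric condition is existential (\emph{some} nodal domain carries an internal edge), matching the negation of ``\emph{every} domain is a single point.'' Read as a universal statement it would simply be false, since an eigenvector may possess both a singleton nodal domain and a multi-vertex nodal domain while still having $\mu<1$; keeping the existential reading is precisely what makes the equivalence line up with the contrapositive of Theorem 5.1. Beyond this, there is no real obstacle: everything reduces to Theorem 5.1, Theorem 2.8, and the connectedness of nodal domains.
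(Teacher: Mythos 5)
Your reduction of the ``$\Leftarrow$'' direction to Theorem 5.1 matches the paper, but your treatment of the quantifier --- the point you yourself single out as the delicate one --- is exactly backwards, and this is a genuine error. In the paper's (admittedly non-idiomatic) phrasing, ``any nodal domain contains a pair of adjacent vertices'' is universal, not existential: the paper's proof of ``$\Rightarrow$'' begins ``For any nodal domain $D^{\pm}_\tau$'' and shows, via $(3.6)$, that $\Sigma_{i\in D^{\pm}_\tau}p^{\pm}_i=\mu\,\delta^{\pm}_\tau<\delta^{\pm}_\tau$ for \emph{every} domain, which forces an internal edge in every domain. Your claimed counterexample --- an eigenvector with $\mu<1$ possessing both a singleton nodal domain and a multi-vertex one --- cannot exist. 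If $D^{+}_\alpha=\{i_0\}$ is a singleton, then every neighbour $j$ of $i_0$ lies in $D^{-}\cup D^{0}$, so $x_{i_0}>x_j$ and $z_{i_0 j}=1$ for all $j\sim i_0$; the equation at $i_0$ then reads $d_{i_0}=\mu d_{i_0}$, i.e.\ $\mu=1$, no matter what the other nodal domains look like. This is precisely the computation in the ``$\Leftarrow$'' half of Theorem 5.1, which you quote in your second paragraph: note that it only ever uses \emph{one} singleton domain, so its contrapositive already gives the universal statement ``$\mu<1$ implies every nodal domain is non-singleton,'' not merely the existential one.

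So the situation is: your existential biconditional is true but is not the theorem; the universal biconditional is the theorem and is also true; and your own computation, read carefully, proves the universal forward implication while you assert it is false. To repair the write-up you need only (i) delete the claim that the universal reading fails, and (ii) either observe that the singleton computation is local to a single domain, or run the paper's argument directly: for each nodal domain $D^{\pm}_\tau$, equation $(3.6)$ together with $\mu<1$ gives $\Sigma_{i\in D^{\pm}_\tau}p^{\pm}_i<\delta^{\pm}_\tau$, and since the left-hand side counts the edges leaving the domain while $\delta^{\pm}_\tau$ counts those plus twice the internal edges, the domain must contain an internal edge, i.e.\ an adjacent pair.
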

\begin{proof}
$"\Leftarrow"$ It follows from the above lemma.

$"\Rightarrow"$ For any nodal domain $D^{\pm}_\tau$, we obtain from $(3.6)$,
$$ p^{\pm}_\tau=\mu \delta^{\pm}_\tau <\delta^{\pm}_\tau.$$
This means that besides those vertices adjacent outside the nodal domain, there is at least a pair of adjacent vertices inside.
\end{proof}
\vskip 0.5cm

\begin{corollary} If $G$ is a connected graph, and there exists a vertex $i_0$ such that
$$ d_{i_0}=\Sigma_{i\neq i_0} d_i,$$
then $I$ has only two eigenvalues: $0$ and $1$, i.e., $I(S(G))=\{0, 1\}$.
\end{corollary}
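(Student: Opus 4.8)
The plan is to first translate the arithmetic hypothesis on $i_0$ into a structural description of $G$, and then use the already-established dichotomy between $\mu=1$ and $\mu<1$ (Theorems 5.1 and 5.2) together with the weighted-median constraint to squeeze the spectrum into $\{0,1\}$.

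First I would observe that $\sum_{i=1}^n d_i = 2m$, where $m=|E|$, so the hypothesis $d_{i_0}=\sum_{i\neq i_0}d_i$ is equivalent to $d_{i_0}=m$; that is, $i_0$ is incident to every edge of $G$. Since $G$ is a simple connected graph, this forces $G$ to be the star $K_{1,m}$ with center $i_0$ and $m$ leaves. In particular the only adjacencies are the center--leaf pairs, and no two leaves are adjacent. By Theorem 2.7 the value $0$ is an eigenvalue, and by Theorem 2.8 every eigenvalue lies in $[0,1]$, so it remains only to rule out eigenvalues in the open interval $(0,1)$ and to exhibit $1$ as a genuine eigenvalue.

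To rule out $(0,1)$, suppose $(\mu,x)$ were an eigenpair with $0<\mu<1$. Since $\mu<1$, Theorem 5.2 says that every nodal domain of $x$ contains a pair of adjacent vertices. Because the only adjacent pairs in a star are center--leaf pairs, every nodal domain must contain the center $i_0$; as the nodal domains are pairwise disjoint and $i_0$ lies in at most one of them, there can be at most one nodal domain, and it must contain $i_0$. (If instead $x_{i_0}=0$, then every nodal domain would consist of mutually non-adjacent leaves, hence of single vertices, contradicting Theorem 5.2 at once.) Say the unique nodal domain is positive, $D^+_1\ni i_0$, containing $p\ge 0$ positive leaves, with the remaining $m-p$ leaves in $D^0$. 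Then $\delta^+=d_{i_0}+p=m+p$, $\delta^-=0$, and $\delta^0=m-p$. Since $\mu\neq 0$, Theorem 2.6 gives $0\in\sum_i d_i\,\Sgn(x_i)$, equivalently the median condition $|\delta^+-\delta^-|\le\delta^0$, which here reads $m+p\le m-p$ and forces $p=0$. But then $D^+_1=\{i_0\}$ is a single vertex with no internal adjacent pair, contradicting Theorem 5.2. The case of a negative nodal domain is symmetric, so no eigenvalue lies in $(0,1)$.

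Finally I would confirm that $1$ is attained. For $m\ge 2$ take $x$ with $x_{i_0}=0$, value $+\tfrac12$ on one leaf, $-\tfrac12$ on a second leaf, and $0$ elsewhere; every nodal domain is then a single vertex, so Theorem 5.1 certifies $\mu=1$, and a direct check of the system (setting the undetermined adjacent coefficients at $i_0$ to $0$) confirms it is an eigenvector. For $m=1$ the graph is $K_2$, and Example 1 already exhibits the eigenvalues $0$ and $1$. Combining the three parts yields $I(S(G))=\{0,1\}$. The \emph{main obstacle} is the middle step: one must read off the signed degrees $\delta^\pm,\delta^0$ from the nodal decomposition accurately, feed them into the weighted-median constraint of Theorem 2.6, and treat the possibility $x_{i_0}=0$ separately; by contrast the reduction to a star and the verification that $1$ occurs are routine.
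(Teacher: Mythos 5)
Your proof is correct and takes essentially the same route as the paper's: identify the graph as a star, use Theorem 5.2 to force a single nodal domain containing the center $i_0$, and derive a contradiction with the median condition coming from Theorem 2.6. You are somewhat more explicit than the paper (the $\delta^{\pm},\delta^0$ bookkeeping, the separate case $x_{i_0}=0$, and the verification that $\mu=1$ actually occurs, which the paper supplies only by listing eigenvectors after its proof), but the underlying argument is the same.
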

\begin{proof}
May assume $i_0=1$. We have $d_2=\cdots =d_n=1$ and $d_1=n-1.$
Suppose the conclusion is not true, i.e., there exists an eigenvalue $\mu\in (0,1)$. According to Theorem 5.2, any nodal domains contain more than one vertex. However, $G$ is connected, it must contain $i_0$.
Thus we have only one nodal domain, and then all terms on the RHS of the system $(1.8)$ have no distinct sign. It contradicts with Theorem 2.6.
\end{proof}
\vskip 0.5cm

In fact, we find the solutions as follow:

\begin{align*}
&\mu_1=0, \,\,\,\phi_1= \frac{1}{2(n-1)}\Sigma^n_{i=1} \textbf{e}_i,\\
&\mu_2=\cdots=\mu_n=1,\\
&\phi_2=\frac{1}{2(n-1)}[\textbf{e}_1-\Sigma^n_{i=2}\textbf{e}_i],\\
&\phi_k=\frac{1}{2}[\textbf{e}_2-\textbf{e}_{k}],\,\,\,\,k=3,\cdots, n.
\end{align*}
\vskip 0.5cm

\begin{theorem}
For $\mu=0$ being a simple eigenvalue, i.e., $x_0=\frac{1}{\Sigma^n_{i=1}d_i}\textbf{1}$ is the unique eigenvector with respect to $0$ if and only if $G$ is connected, where $\textbf{1}=(1,1, \cdots,1)$.
\end{theorem}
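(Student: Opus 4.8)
The plan is to identify the eigenvectors for $\mu=0$ with the global minimizers of $I$ on $X$ and then read off the genus of that set. First I would show that $S(G)\cap I_0=\{x\in X:I(x)=0\}$. Indeed, by Corollary 2.5 every eigenvector with eigenvalue $0$ satisfies $I(x)=0$; conversely, since $I$ is convex and $I\ge 0$ on $\mathbb{R}^n$ with $I(x_0)=0$, any $x\in X$ with $I(x)=0$ is a global minimizer of $I$, so $0\in\partial I(x)=\Delta_1 x$, and $(0,x)$ is an eigenpair because $0\cdot D\,Sgn(x)=\{\theta\}\subset\Delta_1 x$. The decisive observation is then purely combinatorial: $I(x)=\sum_{i}\sum_{j\sim i}|x_i-x_j|=0$ holds if and only if $x_i=x_j$ whenever $i\sim j$, i.e.\ if and only if $x$ is constant on each connected component of $G$.

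Granting this characterization, the ``$\Leftarrow$'' direction is immediate. If $G$ is connected there is a single component, so every eigenvector with $\mu=0$ is constant on $V$; the normalization $\sum_{i=1}^n d_i|x_i|=1$ then forces $x=\pm x_0$ with $x_0=\frac{1}{\sum_{i=1}^n d_i}\textbf{1}$. Thus $S(G)\cap I_0=\{x_0,-x_0\}$ is a symmetric two-point set, which has genus $1$; by Definition 4.12 the eigenvalue $0$ is simple, and $x_0$ is the unique normalized eigenvector.

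For ``$\Rightarrow$'' I would argue by contraposition. Suppose $G$ has components $V_1,\dots,V_c$ with $c\ge 2$, and set $\delta_k=\sum_{i\in V_k}d_i$. By the characterization above, an eigenvector for $\mu=0$ is determined by the tuple $(a_1,\dots,a_c)$ of its values on the components, subject only to $\sum_{k=1}^c \delta_k|a_k|=1$; one checks it solves (1.8) at $\mu=0$ by taking $z_{ij}=0$ on every edge, which is legitimate since there are no edges between distinct components. Hence $S(G)\cap I_0$ is the weighted $\ell^1$ unit sphere in the $c$-dimensional space of component values, and radial projection $a\mapsto a/\|a\|$ is an odd homeomorphism of it onto $S^{c-1}$. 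Consequently $\gamma(S(G)\cap I_0)=\gamma(S^{c-1})=c\ge 2$, so $0$ is not simple; concretely, the vector equal to $\delta_1^{-1}$ on $V_1$ and $0$ elsewhere is an eigenvector distinct from $\pm x_0$.

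The routine points are the verification that the constant-on-components vectors satisfy the system and the reduction $S(G)\cap I_0=I^{-1}(0)$. The one step deserving care is the genus computation $\gamma(S^{c-1})=c$: I would invoke the standard properties of the Krasnoselski genus already used in Theorem 4.13 (monotonicity under odd continuous maps together with the normalization $\gamma(S^{c-1})=c$), after confirming that radial projection is odd and continuous on the weighted sphere. This topological step is the main obstacle, though it is classical.
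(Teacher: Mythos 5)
Your proof is correct and follows essentially the same route as the paper: both directions reduce to the observation that $I(x)=0$ forces $x$ to be constant on each connected component (via Corollary 2.5 in one direction and convexity of $I$ in the other), so connectedness gives uniqueness up to sign, while $c\ge 2$ components yield visibly distinct eigenvectors. The genus computation $\gamma(S^{c-1})=c$ is more than this theorem requires --- the paper defers it to Corollary 5.5 --- but it is sound and does no harm here.
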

\begin{proof}
$"\Leftarrow"$ Suppose $G$ is connected, let $\xi$ be an eigenvector with eigenvalue $0$. From Corollary 2.5,
$I(\xi)=\Sigma_{j\sim i}|\xi_i-\xi_j|=0,\,\,\forall\,i$, it implies
$$ \xi_i=\xi_j, \,\,\,\forall\, j\sim i.$$
However, $G$ is connected, all $\xi_i$ must equal. From $\xi\in X,$ we have $\xi=x_0.$

$"\Rightarrow"$ Suppose that $G$ is not connected, say there are at least two connected components $G_1$
and $G_2$. We define
$$ \xi_k=\frac{1}{D_k}\Sigma_{i\in G_k}\textbf{e}_i,$$
where $D_k= \Sigma_{i\in G_k} d_i\,\, k=1,2.$ Then we have
$$ I(\xi_1)=I(\xi_2)=0.$$
$\xi_1$ and $\xi_2$ are distinct eigenvectors with the same eigenvalue $0$.
\end{proof}
\vskip 0.5cm

\begin{corollary} If $G$ consists of $r$ connected components $G_1, \cdots, G_r$, then the eigenvalue $\mu=0$ has multiplicity $r$. i.e., all the eigenvectors with respect to $0$ consist of a critical set $A$ with $\gamma(A)=r.$
\end{corollary}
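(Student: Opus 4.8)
The plan is to identify the set $A = S(G) \cap I^{-1}(0)$ of all eigenvectors with eigenvalue $0$ explicitly, to exhibit an odd homeomorphism from $A$ onto a standard sphere $S^{r-1}$, and then to compute its genus via the Borsuk--Ulam theorem.

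First I would pin down $A$. By Corollary 2.5, any eigenvector $x$ with eigenvalue $0$ satisfies $I(x)=0$, so $\Sigma_{j\sim i}|x_i-x_j|=0$; since each component $G_k$ is connected, this forces $x$ to be constant, say $x_i=a_k$, on all of $G_k$. Conversely, any $x\in X$ that is constant on each component is a global minimizer of the convex function $I$ (whose minimum value is $0$), so $0\in\partial I(x)$ and hence $P(x)\,0=0$; by Theorem 4.11 such an $x$ is an eigenvector with eigenvalue $0$. Writing $D_k=\Sigma_{i\in G_k}d_i$, the constraint $x\in X$ reads $\Sigma_{k=1}^r D_k|a_k|=1$. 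Thus $A$ is exactly the set of component-constant vectors normalized in this way, and this characterization follows cleanly from Corollary 2.5 together with the convexity of $I$.

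Next I would send $A$ into $\mathbb{R}^r$ by the linear, odd isomorphism $x\mapsto (a_1,\dots,a_r)$, which carries $A$ onto the weighted sphere $Q=\{a\in\mathbb{R}^r\,|\,\Sigma_{k=1}^r D_k|a_k|=1\}$. Since $Q$ is the boundary of a symmetric convex body with the origin in its interior, radial projection $a\mapsto a/\|a\|$ is an odd homeomorphism of $Q$ onto the standard sphere $S^{r-1}$. Composing, I obtain an odd homeomorphism $A\cong S^{r-1}$, and because genus is a topological invariant preserved by odd homeomorphisms, $\gamma(A)=\gamma(S^{r-1})$.

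Finally I would evaluate $\gamma(S^{r-1})=r$: the identity map gives $\gamma(S^{r-1})\le r$, while the nonexistence of an odd continuous map $S^{r-1}\to S^{r-2}$ yields $\gamma(S^{r-1})\ge r$, whence $\gamma(A)=r$. I expect the main obstacle to lie not in the description of $A$, which is routine, but in the genus computation, where the Borsuk--Ulam lower bound is the essential ingredient; everything else reduces to the elementary geometry of the weighted $\ell^1$-sphere.
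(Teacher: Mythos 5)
Your proposal is correct, and it takes a genuinely different route for the harder half of the statement. For the lower bound both arguments are the same in substance: the component-constant vectors form a set oddly homeomorphic to $S^{r-1}$ inside $A$, whence $\gamma(A)\ge r$. The difference is in the upper bound. The paper does \emph{not} identify $A$ explicitly; it argues by contradiction, invoking the intersection property of the genus (a Borsuk--Ulam-type fact that a symmetric set of genus $>r$ must meet the codimension-$r$ subspace $T^\perp$), and then uses $I(x)=\sum_k I(P_k x)$ together with Theorem 5.4 to show any such intersection point would lie in $T$, a contradiction. You instead observe that $A$ is \emph{exactly} the set $B=T\cap X$ of component-constant vectors: $I(x)=0$ forces constancy on each component by Corollary 2.5, and conversely every component-constant $x\in X$ is a global minimizer of the convex function $I$, hence satisfies $0\in\partial I(x)$ and is an eigenvector with eigenvalue $0$. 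Once $A=B$ is known, the odd homeomorphism onto the weighted $\ell^1$-sphere and then radially onto $S^{r-1}$ gives $\gamma(A)=\gamma(S^{r-1})=r$ directly. Your route is more elementary in that it replaces the intersection-property machinery by an exact description of the critical set plus the single standard fact $\gamma(S^{r-1})=r$; the paper's route is the one that generalizes when the critical set cannot be pinned down exactly. (Both arguments tacitly assume each component contains an edge, so that every $D_k>0$ and the relevant spheres are compact; this degenerate caveat is shared with the paper and is not a defect of your proof.)
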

\begin{proof} We have the following $r$ pairs with eigenvalue $0$,
$$ \xi_k=\pm \frac{1}{D_k}\Sigma_{i\in G_k}\textbf{e}_i,$$
where $D_k= \Sigma_{i\in G_k} d_i\,\, k=1,2,\cdots, r.$
Let $T=span\{\xi_1,, \cdots, \xi_r \}$ and $B=T\cap X$. It is homeomorphic to $S^{r-1}$. By definition, $B\subset A$, therefore
$$\gamma(A)\ge \gamma(B)=r.$$

It remains to show: $\gamma(A)\le r.$ If not, we assume $\gamma(A)> r.$

Let $G_k=(V_k, E_k), Y_k=R^{|V_k|},$ where $|V_k|$ is the cardinal number of $V_k$, let $Z_k=Y_k\cap \xi_k^\bot$ be the orthogonal complement of $\xi_k$ in $Y_k$, and let $P_k: R^k\to Y_k$ be the orthogonal projection, $k=1,2, \cdots, r .$ Since $G_1, \cdots G_r$ are components of $G,$ we have
$$ I(x)=\sum^r_{k=1} I(P_k x).$$
Note that
$$ T^\bot=\bigoplus_{k=1}^r  Z_k.$$
then $$dim(T^\bot)=n-r.$$
Since we assume $\gamma(A)> r,$ then $A\cap T^\bot\neq \emptyset$, according to the intersection property, see \cite{refR}, \cite{refC2}. That is $\exists\, x_0\in T^\bot\cap A$, from
$$ \sum^r_{k=1} I(P_k x_0)=I(x_0)=0,$$
it follows,
$$I_k(x_0)=0,\,\,\,k=1,2, \cdots, r.$$
According to Theorem 5.4, $P_k x_0=\xi_k,\,\,k=1, 2, \cdots, r,$ we obtain
$$ x_0=\Sigma^r_{k=1}\xi_k \in T.$$
This is a contradiction..
\end{proof}
\vskip 0.5cm

An estimate of the nontrivial eigenvalues is obtained.
\begin{theorem}
If the eigenvalue  $0<\mu<1$, then
$$ \frac{2}{\Sigma^n_{i=1} d_i}\le \mu \le \frac{n-2}{n-1}.$$
\end{theorem}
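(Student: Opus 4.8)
The plan is to read both inequalities off the nodal-domain decomposition of Section 3, rewriting $\mu$ in terms of edge counts. Fix an eigenvector $x$ with $0<\mu<1$, and let $D^+_\alpha,D^-_\beta,D^0$ be its nodal domains and null set. For a single sign domain $D^\pm_\tau$, every edge leaving it runs either to a domain of the opposite sign or to $D^0$, so along such an edge $z_{ij}=\pm1$; hence $\sum_{i\in D^\pm_\tau}p^\pm_i$ in $(3.5)$ equals $\pm$ (number of edges leaving $D^\pm_\tau$), and $(3.5)$ reads
$$ \#\{\text{edges leaving } D^\pm_\tau\}=\mu\,\delta^\pm_\tau . $$
Writing $e_\tau$ for the number of edges internal to $D^\pm_\tau$ and noting that the degree sum $\delta^\pm_\tau$ counts each internal edge twice and each leaving edge once, I obtain the per-domain identity $\mu\,\delta^\pm_\tau=\delta^\pm_\tau-2e_\tau$. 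Summing over all sign domains, and writing $E_{\mathrm{in}}=\sum_\tau e_\tau$, $\delta=\delta^++\delta^-$, $N=2E_{+-}+E_{0\pm}$ (the total leaving-edge count, in which each edge joining opposite signs is counted twice and $E_{+-},E_{0\pm}$ denote the numbers of edges between opposite signs and between the sign domains and $D^0$), yields the two identities I will use throughout:
$$ \mu=1-\frac{2E_{\mathrm{in}}}{\delta},\qquad N=\mu\,\delta . $$

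For the lower bound I would argue on the positive integer $N=\mu\delta$. If $N\ge 2$, then $\mu=N/\delta\ge 2/\delta\ge 2/\sum_i d_i$, since $\delta=\sum_{x_i\ne0}d_i\le\sum_i d_i$. The only remaining case is $N=1$, i.e. $E_{+-}=0$ and $E_{0\pm}=1$. Each sign domain has at least one leaving edge (its leaving count $\mu\delta^\pm_\tau$ is a positive integer), and the sum of these counts is exactly $N=1$, so there is a single sign domain; say it is positive, giving $\delta^-=0$ and $\delta=\delta^+$. Now Theorem 2.6 (applicable since $\mu\ne0$) forces $\delta^+=|\delta^+-\delta^-|\le\delta^0=\sum_i d_i-\delta^+$, hence $\delta^+\le\tfrac12\sum_i d_i$, and so $\mu=1/\delta^+\ge 2/\sum_i d_i$. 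I expect this $N=1$ configuration to be the one delicate point: it is precisely where the median condition of Theorem 2.6, rather than a naive count, supplies the missing factor of two.

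For the upper bound I would exploit $\mu=1-2E_{\mathrm{in}}/\delta$ and show $2E_{\mathrm{in}}/\delta\ge 1/(n-1)$. Let $n_1=|D^+|+|D^-|$ be the number of nonzero vertices. Since $G$ has no loops or multiple edges, $d_i\le n-1$, so $\delta\le n_1(n-1)$. On the other hand each nodal domain is a connected subgraph on $|D_\tau|$ vertices, giving $e_\tau\ge|D_\tau|-1$, and by Theorem 5.2 (here $\mu<1$) each domain contains a pair of adjacent vertices, so $|D_\tau|\ge2$ and the number of domains $r$ satisfies $r\le n_1/2$. Combining, $E_{\mathrm{in}}=\sum_\tau e_\tau\ge n_1-r\ge n_1/2$, whence $n_1\le 2E_{\mathrm{in}}$ and $\delta\le n_1(n-1)\le 2(n-1)E_{\mathrm{in}}$. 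This gives $2E_{\mathrm{in}}/\delta\ge 1/(n-1)$ and therefore $\mu\le 1-\tfrac{1}{n-1}=\tfrac{n-2}{n-1}$. The subtlety here is the chain $E_{\mathrm{in}}\ge n_1/2$, which must combine connectedness of each nodal domain with the two-vertices-per-domain bound from Theorem 5.2; once that is in place, both inequalities follow from the single identity $\mu=1-2E_{\mathrm{in}}/\delta$ together with $\delta\le\sum_i d_i$ and $\delta\le n_1(n-1)$.
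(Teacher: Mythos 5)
Your proof is correct, and while it runs on the same machinery as the paper's (the nodal-domain edge counts of Section 3, identity (3.5), Theorem 5.2 for the upper bound, and the balance condition $|\delta^+-\delta^-|\le\delta^0$ from Theorem 2.6 for the lower bound), the execution at both delicate points is genuinely different -- and in both places your version is the more robust one. For the upper bound the paper works with a single nodal domain, writes $\mu=p^+/\delta^+\le 1-1/\delta^+$, and then claims $\delta^+\le n-1$ on the grounds that otherwise $D^+=V$; this conflates the degree sum $\delta^+$ with a vertex count and fails already for $K_5$ at $\mu_2=3/4$, where a two-vertex positive domain has $\delta^+=8>n-1$. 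Your chain $\mu=1-2E_{\mathrm{in}}/\delta$ with $\delta\le n_1(n-1)$ and $E_{\mathrm{in}}\ge n_1-r\ge n_1/2$ is the correct repair, and it is sharp exactly where the paper's example is. For the lower bound the paper asserts that the normalized eigenvector must have two oppositely signed vertices and extracts a contribution $2\delta^{-1}$ to $I(x)$ from the resulting jump; but $D^-$ can be empty for $0<\mu<1$ (the paper's own Example 5, $\mu_2=1/2$ with eigenvector $\tfrac14(1,0,0,1,0)$, has no sign change), so the stated reason does not cover all cases. Your dichotomy on the integer $N=\mu\delta$ handles this cleanly: when $N\ge 2$ the count alone suffices, and when $N=1$ the median condition of Theorem 2.6 supplies the factor of two -- you correctly flagged this as the only place where that condition is actually needed. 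In short, your argument buys a complete case analysis where the paper's buys brevity at the cost of two steps that do not survive scrutiny on the paper's own examples.
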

\begin{proof} We may assume that $G$ is connected. Let $x$ be the eigenvector with respect to $\mu$, and let $D^+$ be a nodal domain of $x$. According to Theorem 5.2, it contains at least $2$ adjacent vertices. Thus, following the notations in subsection 3.1,
$$ \delta^+\ge p^+ + 1.$$
Now,
$$ \mu=\frac{p^+}{\delta^+}\le 1-\frac{1}{\delta^+},$$
We claim
$$ \delta^+\le n-1.$$
For otherwise, $D^+=V$, and then $\mu=0$. This is a contradiction. We obtain
$$ \mu\le 1-\frac{1}{n-1}.$$

As to the left inequality, we notice that the normal eigenvector $x\in \pi \cap X,$ is of the form $(3.13)$, there must be vertices $k$ and $l$ such that
$ x_k x_l<0$, with either $k\sim l$ or $k\sim D^0, l\sim D^0$, where $D^0$ is the null set of $x$. Let $\delta =\sum_{i\notin D^0} d_i,$ then by Corollary 2.5,
$$ \mu=I(x)=\Sigma_{j\sim i}|x_i-x_j|\ge  2\delta^{-1}.$$
Since $\delta \le \Sigma_{i=1}^n d_i, $ the conclusion follows.
\end{proof}
\vskip 0.5cm

\subsection{Alternative characterization of the mountain pass point and the second eigenvalue}

Given an open $k-1$ cell $\Delta=\Gamma_{\iota, m}$ with $(\iota, m)\in I_k\times M_k$, the index subsets
$D^\pm, D^0$ are all determined:
$$ D^\pm=\{ i\in \iota\,|\, \pm m(\iota)_i>0\},\,\, D^0=\{i\notin  \iota\}.$$
And then for a graph $G$, the nodal domains $D^\pm_\gamma(x)$ are invariant$\,\forall\, x\in \Delta$.
\vskip 0.5cm

Now we improve Corollary 2.5 to the following:
\begin{lemma} For a pair $(\mu, x)\in R^1\times X$, if there exist $z_{ij}=z_{ij}(x)\in Sgn(x_i-x_j)$, satisfying $z_{ij}=-z_{ji}$, and
$$ \Sigma_{j\sim i} z_{ij}(x)=\mu d_i sgn(x_i),\,\,\,\,\forall\,i\in D^+\cup D^-,$$
then $\mu=I(x)$.
\end{lemma}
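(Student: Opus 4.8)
The plan is to follow the proof of Corollary 2.5 almost verbatim, the only new ingredient being that the vertices in $D^0$, where $x_i=0$, make no contribution to the Euler pairing. First I would package the given coefficients into an edge function $z:E\to\mathbb{R}^1$: by hypothesis $z_{ij}=-z_{ji}$ for every $i\sim j$, so $z$ is a well-defined $\mathbb{R}^m$ vector under the fixed orientation, and $z_{ij}\in Sgn(x_i-x_j)$ says precisely that $z_e(Bx)_e=|(Bx)_e|$ for each $e\in E$. This is condition (2) of Theorem 2.1, so setting $u=B^Tz$ we obtain $u\in\partial I(x)$, and Corollary 2.2 then gives the Euler identity $(u,x)=I(x)$.

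Next I would expand the pairing coordinatewise. Since $u_i=\Sigma_{j\sim i}z_{ij}$, we have $(u,x)=\Sigma_{i=1}^n(\Sigma_{j\sim i}z_{ij})\,x_i$, and I would split the sum along $V=(D^+\cup D^-)\cup D^0$. For $i\in D^0$ we have $x_i=0$, so every such term vanishes regardless of the value of $\Sigma_{j\sim i}z_{ij}$ there; this is the single place where the present hypothesis is genuinely weaker than that of Corollary 2.5, since no equation at the null vertices is ever invoked. For $i\in D^+\cup D^-$ the assumption $\Sigma_{j\sim i}z_{ij}=\mu d_i\,sgn(x_i)$ applies, making each term equal to $\mu d_i\,sgn(x_i)\,x_i=\mu d_i|x_i|$.

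Combining the two pieces gives $(u,x)=\mu\Sigma_{i\in D^+\cup D^-}d_i|x_i|=\mu\Sigma_{i=1}^n d_i|x_i|=\mu$, where the middle equality uses $|x_i|=0$ on $D^0$ and the last uses the normalization $x\in X$. Since $(u,x)=I(x)$, this yields $I(x)=\mu$, as claimed. There is no real obstacle here beyond this bookkeeping: the entire content is the observation that the equations at the null vertices are dispensable, being multiplied by $x_i=0$ in the Euler pairing, which is exactly what permits imposing the eigenvector relations only on the support $D^+\cup D^-$ of $x$.
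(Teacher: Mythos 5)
Your proposal is correct and follows essentially the same route as the paper: the paper's proof likewise multiplies each equation by $x_i$, notes that the terms at null vertices vanish since $x_i=0$ there, and sums, using the Euler identity $\Sigma_i\Sigma_{j\sim i}z_{ij}x_i=\Sigma_{j\sim i}|x_i-x_j|$ (your Corollary 2.2 step) to identify the left side with $I(x)$. Your explicit appeal to Theorem 2.1 and Corollary 2.2 merely names the bookkeeping the paper performs inline.
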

\begin{proof} Since
$$ \Sigma_{j\sim i} z_{ij}(x) x_i=\mu d_i |x_i|,\,\,\,\,\forall\,i\in D^+\cup D^-,$$
and
$$ \Sigma_{j\sim i} z_{ij}(x) x_i=0.\,\,\,\,\forall\,i\notin  D^+\cup D^-,$$
Again by summation, we obtain
$$ \Sigma_{j\sim i}|x_i-x_j|=\mu\Sigma_{i=1}^n d_i|x_i|=\mu.$$
\end{proof}
\vskip 0.5cm

\begin{lemma}Let $\Delta=\Gamma_{\iota, m}$ be a $k-1$ dimensional open cell in $X$. Assume that $\xi\in \Delta$ attains the minimum of $I$ on $\Delta$, and
$$ c_\Delta=r(E^+-E^-),\,\,\delta=\delta^++\delta^-,\,\, r=\delta^{-1}$$
where $E^\pm=\Sigma_{i\in D^\pm} e_i$,  and $\delta^\pm=\Sigma_{i\in D^\pm} d_i$. Then $$I(\xi)=I(c_\Delta).$$
\end{lemma}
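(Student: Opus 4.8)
The plan is to reduce everything to the refinement of Corollary~2.5 just proved (call it the preceding Lemma): a pair $(\mu,x)$ forces $I(x)=\mu$ as soon as the identity $\sum_{j\sim i}z_{ij}=\mu d_i\operatorname{sgn}(x_i)$ holds merely on $D^+\cup D^-$. So I want to produce one scalar $\mu$ and one antisymmetric admissible family $\{z_{ij}\}$ of adjacency coefficients that witness this identity simultaneously at $\xi$ and at $c_\Delta$; the equality $I(\xi)=I(c_\Delta)$ is then immediate because both points acquire the common $I$-value $\mu$.

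First I would record the rigidity of the cell. On $\Delta=\Gamma_{\iota,m}$ the signature pattern is frozen: $D^+,D^-,D^0$ are the same for every point of $\Delta$, with $D^0=\{i\notin\iota\}$ and $\operatorname{sgn}(x_i)=m_i$ for $i\in\iota$. In particular $I$ restricted to $\overline\Delta$ is convex, being a finite sum of the convex maps $x\mapsto|x_i-x_j|$ restricted to the convex set $\overline\Delta$. Next I would extract the first-order condition at $\xi$. Since $\xi$ lies in the relative interior of $\Delta\subset X$ and minimizes the convex function $I$ there, for every tangent direction $v$ of the affine hull of $\Delta$ one has both $I'(\xi;v)\ge0$ and $I'(\xi;-v)\ge0$; writing $I'(\xi;v)=\max_{p\in\partial I(\xi)}\langle p,v\rangle$ and applying a standard separation argument to the compact convex set $\partial I(\xi)$ yields a subgradient $p\in\partial I(\xi)$ that annihilates the whole tangent space (equivalently, its $\sigma=\emptyset$ projection $\hat P_{(\emptyset)}p$ vanishes, so $\langle p,h_\alpha\rangle=0$ for all $\alpha$). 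Hence $p$ is parallel, on the $\iota$-coordinates, to the constraint normal $n_0=\sum_{i\in\iota}m_id_i\,\textbf{e}^*_i$. Writing $p_i=\sum_{j\sim i}z_{ij}$ with $z_{ij}\in\operatorname{Sgn}(\xi_i-\xi_j)$ and $z_{ij}=-z_{ji}$, this is exactly
\[
\sum_{j\sim i}z_{ij}=\mu\,d_i\,m_i,\qquad i\in\iota=D^+\cup D^-,
\]
where $\mu$ is the common value of $p_i/(d_im_i)$. Applying the preceding Lemma to $\xi$ gives $I(\xi)=\mu$.

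Finally I would transport the very same coefficients to $c_\Delta$. The point $c_\Delta=r(E^+-E^-)$ lies in $\Delta$ as well, since all its nonzero coordinates are $\pm r$, so its support and signs coincide with those of $\xi$; moreover it is constant on each nodal domain. Therefore, for a within-domain edge $(i,j)$ one has $(c_\Delta)_i-(c_\Delta)_j=0$, so $\operatorname{Sgn}\!\big((c_\Delta)_i-(c_\Delta)_j\big)=[-1,1]$ contains the old $z_{ij}$; while for any boundary edge the sign of the difference $x_i-x_j$ is constant throughout $\Delta$ (it is $+1$ for $i\in D^+$ and $-1$ for $i\in D^-$, whether the other end lies in the opposite sign class or in $D^0$), so $\operatorname{Sgn}$ is the same singleton at $c_\Delta$ as at $\xi$ and again contains $z_{ij}$. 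Thus the identical antisymmetric family $\{z_{ij}\}$ is admissible at $c_\Delta$ and still satisfies $\sum_{j\sim i}z_{ij}=\mu d_im_i=\mu d_i\operatorname{sgn}\!\big((c_\Delta)_i\big)$ on $\iota$. One more application of the preceding Lemma yields $I(c_\Delta)=\mu=I(\xi)$.

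The only genuinely delicate step is the passage to the Lagrange identity at $\xi$: because $I$ is nonsmooth I cannot differentiate, and must instead select the correct subgradient of $\partial I(\xi)$ normal to the constraint hyperplane, which is precisely where the separation (or the vanishing of the $\sigma=\emptyset$ projection from Theorem~4.2) is needed. Everything after that is bookkeeping on the frozen sign pattern of the cell, exploiting that collapsing each nodal domain to a constant only enlarges the admissible set of within-domain coefficients.
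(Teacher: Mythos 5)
Your proposal is correct and follows essentially the same route as the paper: extract the Lagrange-type identity $\sum_{j\sim i}z_{ij}=\mu d_i\operatorname{sgn}(\xi_i)$ on $D^+\cup D^-$ from the interior minimality of $\xi$ on the open cell, apply the preceding lemma to get $I(\xi)=\mu$, and then observe that the same antisymmetric coefficients remain admissible at $c_\Delta$ because within-domain $\operatorname{Sgn}$-sets only enlarge (to $[-1,1]$) and boundary-edge signs are frozen on the cell, so the preceding lemma gives $I(c_\Delta)=\mu$ as well. The only cosmetic difference is that the paper runs this admissibility check along the whole segment $\xi^t=(1-t)\xi+tc_\Delta$ while you go directly to the endpoint, and you spell out the separation argument behind the first-order condition that the paper merely asserts.
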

\begin{proof}
$1^o$ Since $\xi=(\xi_1.\cdots, \xi_n)$ attains the minimum of $\tilde{I}$ on $\Delta$ and $\Delta$ is open, we have
$$ \partial I(\xi)=\mu d_i sgn(\xi_i),\,\,\forall\, i\in D^+(\xi)\cup D^-(\xi),$$
for some $\mu\in R^1,$ i.e., $\exists\, z_{ij}(\xi)\in Sgn(\xi_i -\xi_j)$, satisfying $ z_{ji}(\xi)= - z_{ij}(\xi)$ and
$$\sum_{j\sim i} z_{ij}(\xi)=\mu d_i sgn(\xi_i),\,\,\forall\, i\in D^+(\xi)\cup D^-(\xi).$$
From the previous lemma, we have $\mu=I(\xi)$.

$2^o$ Let $\xi^t=(1-t)\xi+t c_\Delta,\,t\in [0, 1]$, then $\xi^t=(\xi^t_1,\cdots, \xi^t_n)$ satisfies
\begin{equation*}\xi^t_i= \left\{\begin{array} {l}
(1-t)\xi_i+tr,\,\,\,\,\,\mbox{if}\,i\in D^+, \\
(1-t)\xi_i-tr,\,\,\,\,\,\mbox{if}\,i\in D^-,\\
(1-t)\xi_i\,\,\,\,\,\,\,\mbox{if}\,i\in D^0.
\end{array}\right.
\end{equation*}
It is easily seen
$$ Sgn(\xi^t_i-\xi^t_j)=Sgn(\xi_i-\xi_j)\,\,\,\forall\, t\in [0, 1), \forall\,i\in D^+\cup D^-,\,\forall\, j.$$
and
$$ sgn(\xi^t_i)=sgn(\xi_i),\,\,\,\forall\, t\in [0, 1],\, \forall\,i\in D^+\cup D^-.$$
$\forall\, i\in D^+_\alpha$, from
$$Sgn(\xi^1_i-\xi^1_j)= 1,\,\,\,\,\forall\,j\in D^-\cup D^0, $$
$$Sgn(\xi^1_i-\xi^1_j)=[-1, 1]\,\,\,\,\,\,\,\forall\,j\in D^+_\alpha,$$
and the relation: $z_{ij}=-z_{ji}$, it follows
$$ Sgn(\xi^t_i-\xi^t_j)\subset Sgn(\xi^1_i-\xi^1_j),  \,\,\forall\, t\in [0,1).$$
We take  $z_{ij}(\xi^1)=z_{ij}(\xi)\in Sgn(\xi^1_i-\xi^1_j)$, it follows
$$ \mu d_i sgn(\xi^1_i) =\mu d_i sgn(\xi_i)=\Sigma_{j\sim i}z_{ij}(\xi)\in \Sigma_{j\sim i}Sgn(\xi^1_i-\xi^1_j), \,\,\,\,\forall\, i\in D^+\cup D^-.$$
According to lemma 5.7,
$$ I(\xi)=\mu= I(c_\Delta).$$
\end{proof}
\vskip 0.5cm

\begin{lemma} The subset $\pi \subset X$ is closed.
\end{lemma}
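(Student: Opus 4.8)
The plan is to first rewrite the defining inequality of $\pi$ into a decoupled form, and then exploit a semicontinuity property of the degree sums. Recall that on $X$ one has the identity $\delta^+(x)+\delta^-(x)+\delta^0(x)=d$ with $d=\sum_{i=1}^n d_i$ constant. Hence the condition $|\delta^+(x)-\delta^-(x)|\le\delta^0(x)$ is equivalent to the two simultaneous inequalities $\delta^+(x)-\delta^-(x)\le\delta^0(x)$ and $\delta^-(x)-\delta^+(x)\le\delta^0(x)$; substituting $\delta^0=d-\delta^+-\delta^-$, these collapse to $\delta^+(x)\le d/2$ and $\delta^-(x)\le d/2$. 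Thus I would replace the description of $\pi$ by $\pi=\{x\in X\,:\,\delta^+(x)\le d/2\ \text{and}\ \delta^-(x)\le d/2\}$, which isolates each constraint into a single degree sum bounded by a constant.

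Next I would establish that $x\mapsto\delta^+(x)$ and $x\mapsto\delta^-(x)$ are lower semicontinuous on $\mathbb{R}^n$. The point is that the positivity set $\{i:x_i>0\}$ can only grow under small perturbations: if $x_i>0$ for every $i\in D^+(x)$ and $\|y-x\|$ is smaller than the least of these positive coordinates, then $y_i>0$ for those same $i$, so $D^+(x)\subseteq D^+(y)$ and therefore $\delta^+(y)\ge\delta^+(x)$. Consequently $\delta^+(x)\le\liminf_k\delta^+(x^{(k)})$ for every sequence $x^{(k)}\to x$, and the identical argument gives the same statement for $\delta^-$.

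With these two facts the closedness is immediate. Take $x^{(k)}\in\pi$ with $x^{(k)}\to x$. Since $X$ is the preimage of $\{1\}$ under the continuous map $x\mapsto\sum_{i=1}^n d_i|x_i|$, it is closed, so $x\in X$. Using $\delta^+(x^{(k)})\le d/2$ for all $k$ together with lower semicontinuity gives $\delta^+(x)\le\liminf_k\delta^+(x^{(k)})\le d/2$, and likewise $\delta^-(x)\le d/2$; hence $x\in\pi$.

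The only genuine obstacle, and the reason a direct attack fails, is that $\delta^+$, $\delta^-$ and $\delta^0$ are all honestly discontinuous on $X$: a coordinate passing through zero makes $\delta^0$ jump up while $\delta^\pm$ jumps down. One therefore cannot pass to the limit in $|\delta^+-\delta^-|\le\delta^0$ as written, because the upper semicontinuity of $\delta^0$ and the lower semicontinuity of $\delta^\pm$ push the two sides in incompatible directions. The algebraic reduction to the pair $\delta^+\le d/2$, $\delta^-\le d/2$ is precisely what dissolves this difficulty, since a sublevel set $\{f\le c\}$ of a lower semicontinuous function $f$ is always closed.
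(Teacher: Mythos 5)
Your proof is correct. The reduction of $|\delta^+(x)-\delta^-(x)|\le\delta^0(x)$ to the decoupled pair $\delta^+(x)\le d/2$, $\delta^-(x)\le d/2$ is a valid consequence of the identity $\delta^0=d-\delta^+-\delta^-$, the lower semicontinuity of $\delta^\pm$ is established by exactly the right observation (a strictly signed coordinate keeps its sign under small perturbation, so $D^\pm(x)\subseteq D^\pm(y)$ for $y$ near $x$), and closedness of sublevel sets of lower semicontinuous functions finishes the argument. Your route is genuinely different from the paper's. The paper argues cell by cell: $\delta^\pm$ and $\delta^0$ are constant on each open cell $\Gamma_{\iota,m}$, and when a sequence in a cell converges to a boundary point, the degree mass lost from $\delta^+$ and $\delta^-$ (say $a$ and $b$) is exactly the mass gained by $\delta^0$; the preservation of the inequality then rests on the implicit estimate $|(\delta^+-a)-(\delta^--b)|\le|\delta^+-\delta^-|+|a-b|\le\delta^0+a+b$, which the paper asserts without writing out. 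Your version buys two things: it avoids any bookkeeping of how degree transfers between the three sets at the boundary, replacing it with a one-line algebraic identity, and it converts a statement about a two-sided inequality between three discontinuous functions into the standard fact that $\{f\le c\}$ is closed for $f$ lower semicontinuous. The paper's version, on the other hand, makes visible the cell structure of $X$ that is used throughout Sections 3--5, so it fits more naturally into the surrounding machinery. Either proof is acceptable; yours is the tighter argument, and your closing remark correctly diagnoses why a naive limit passage in $|\delta^+-\delta^-|\le\delta^0$ fails (the semicontinuities of the two sides point in opposite directions), which is precisely the gap the paper's wording papers over.
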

\begin{proof} Note that the numbers $\delta^\pm(x)$ as well as $\delta^0(x)$ are constants on each open cell
$\Delta\subset \pi$. However, if a sequence $\{ x^{(m)}\}\subset \Delta$ tends to a point $x^{(0)}$ on the boundary of $\Delta$, then the number gained in $\delta^0(x^{(0)})$ are those lost from $\delta^+(x^{(0)})$ and $\delta^-(x^{(0)})$. Therefore
$$|\delta^+(x^{{m}})-\delta^-(x^{{m}})| \le \delta^0(x^{{m}}),$$
implies
$$|\delta^+(x^{{0}})-\delta^-(x^{{0}})| \le \delta^0(x^{{0}}).$$
This means that $x^{(m)}\subset \pi$ implies $x^{(0)}\in \pi.$
\end{proof}
\vskip 0.5cm

\begin{lemma}Let $m=Min_{x\in \pi} I(x)$ Then there exists an element of the form
$$ \phi =\delta^{-1}(E^+-E^-),\,\,\,\,\delta=\delta^++\delta^-,$$
such that $m=I(\phi)$.
\end{lemma}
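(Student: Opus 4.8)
The plan is to combine a compactness argument with the cell--flattening result of Lemma 5.8. First I would settle existence of a minimizer: the function $I$ is continuous, and by Lemma 5.9 the set $\pi$ is closed; since $\pi\subset X$ and $X$ is compact, the infimum $m=\min_{x\in\pi}I(x)$ is attained at some $\xi=(\xi_1,\dots,\xi_n)\in\pi$.

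Next I would locate $\xi$ within the cell decomposition of $X$. By the partition $X=\bigsqcup_{k}\bigsqcup_{(\iota,m)}\Gamma_{\iota,m}$, the point $\xi$ belongs to exactly one open cell $\Delta=\Gamma_{\iota,m}$, namely the one whose index set $\iota$ and sign pattern $m$ record the support and the signs of $\xi$. The crucial observation---already exploited in the proof of Lemma 5.9---is that $\delta^+,\delta^-,\delta^0$ are constant on each open cell, so the defining inequality $|\delta^+-\delta^-|\le\delta^0$ of $\pi$ holds either at every point of $\Delta$ or at none. Since $\xi\in\Delta\cap\pi$, the entire open cell satisfies $\Delta\subset\pi$. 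Consequently $I(y)\ge m=I(\xi)$ for all $y\in\Delta$, i.e.\ $\xi$ attains the minimum of $I$ on the open cell $\Delta$.

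This is precisely the hypothesis of Lemma 5.8. Applying that lemma with $D^\pm$ the nodal index sets of $\Delta$, $E^\pm=\Sigma_{i\in D^\pm}\textbf{e}_i$, $\delta=\delta^++\delta^-$ and $r=\delta^{-1}$, I obtain the centre of gravity $c_\Delta=r(E^+-E^-)$ with $I(c_\Delta)=I(\xi)=m$. Setting $\phi=c_\Delta=\delta^{-1}(E^+-E^-)$ yields an element of the required form. One checks directly that $\phi$ lies in the same open cell $\Delta$: its support is $\iota$ with signs $m$, and $\Sigma_i d_i|\phi_i|=\delta^{-1}\Sigma_{i\in D^+\cup D^-}d_i=1$, so $\phi\in\Delta\subset\pi$ and $\phi$ is itself a minimizer, completing the argument.

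I expect the only genuine subtlety to be the middle step: verifying that the open cell containing the minimizer is entirely contained in $\pi$. This is what makes Lemma 5.8 applicable to $\xi$ and, at the same time, guarantees that the flattened vector $\phi=c_\Delta$ does not leave $\pi$. It rests solely on the fact that $\pi$ is a union of cells because its defining data $\delta^\pm,\delta^0$ are cell--constant; the surrounding steps are a routine compactness argument and a direct invocation of Lemma 5.8.
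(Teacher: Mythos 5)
Your argument follows the same route as the paper: compactness of $\pi$ gives a minimizer, the minimizer is located in a cell of the decomposition of $X$, and Lemma 5.8 flattens it to the centre of gravity. Your middle step --- that $\delta^{\pm}$ and $\delta^{0}$ are constant on each open cell, so $\pi$ is a union of cells and the minimizer over $\pi$ is automatically a minimizer over its \emph{entire} cell --- is a point the paper leaves implicit, and it is exactly the justification needed to make Lemma 5.8 applicable; this is a genuine improvement in explicitness.

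There is, however, one case you do not cover: the minimizer need not lie in an \emph{open} cell. The cells $\Gamma_{\iota,m}$ with $|\iota|=1$ are single points $\pm\frac{1}{d_i}\textbf{e}_i$ (the paper notes that all cells are open ``except $k=1$''), and such a point can belong to $\pi$ whenever $2d_i\le d$. If the minimum is attained there, your step ``$\xi$ belongs to exactly one open cell'' fails, and Lemma 5.8 cannot be invoked, since its proof uses openness of $\Delta$ to extract the first-order (Lagrange multiplier) condition at an interior minimizer. The paper handles this by splitting on the value of $m$: since $I\bigl(\pm\frac{1}{d_i}\textbf{e}_i\bigr)=1$, a $0$-cell can only be the minimizer when $m=1$, and in that case the point is already of the required form with $E^{+}=\textbf{e}_i$ and $E^{-}=\emptyset$, so nothing needs to be proved. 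The gap is therefore easily closed, but as written your proof is incomplete without this case distinction.
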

\begin{proof}
Since $\pi$ is closed, it is compact, there exists $x_0\in \pi$ such that $m=I(x_0).$ If $m<1$. then $x_0\in X\backslash V$. Following the notations used in section 4,
$ X=\cup_{k=1}^n S_k$ is a disjoint union, and also
$$ S_k=\cup_{(\iota, m)\in I_k\times M_k}\Gamma_{\iota, m}.$$

All $\Gamma_{\iota, m}$ are open sets except $k=1,$  there must be a unique
$(\iota,  m)\in I_k\times M_k, k>1,$ such that $x_0\in \Gamma_{\iota, m}$. Now $\Gamma_{\iota, m}$ is open,
after lemma 5.8, we have $\phi$ of the above form such that $I(\phi)=I(x_0)=m.$

Otherwise, $m=1,$ either $x_0\in \Gamma_{\iota, m}$ with $(\iota, m)\in I_k\times M_k,\,k>1$, or $x_0=e_i$ for some $i\in V$. In the previous case, the conclusion follows from the above argument, and in the latter case, one takes $E^+=e_i, E^-=\emptyset,$ i.e., $\phi=\frac{1}{d_i}e_i$.
\end{proof}
\vskip 0.5cm

\begin{lemma} Let $D^\pm$ and $D^0$ be disjoint index subsets of $\{1, 2, \cdots n\}$ with $D^+\cup D^-\cup D^0= \{1, 2, \cdots n\}.$ Let $\phi=(\delta^++\delta^-)^{-1}(E^+-E^-)$ and $\phi_0=\frac{1}{d}(E^++E^-+E^0)$, where $d=\Sigma^n_{i=1}d_i=\delta^+ +\delta^- +\delta^0$. Then there exists a path on $X$ connecting $\phi$ and $\phi_0$ in the level set $I_c$, where $c=I(\phi)$.
\end{lemma}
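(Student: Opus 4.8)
The plan is to exploit two structural features of the functional $I$ that make the constant vector a natural attractor: $I$ is unchanged when a constant vector is added to its argument, $I(x+s\mathbf{1})=I(x)$ for every $s\in\mathbb{R}$ (only the differences $x_i-x_j$ enter $I$), and $I$ is positively homogeneous of degree one, $I(\lambda x)=\lambda I(x)$ for $\lambda>0$. Writing $\mathbf{1}=(1,\dots,1)$ and using the weighted norm $\|y\|=\sum_{i=1}^n d_i|y_i|$, I would push $\phi$ toward $\phi_0$ along the half-line $v(s)=\phi+s\mathbf{1}$, $s\ge 0$, renormalized back onto $X$ by $x(s)=v(s)/\|v(s)\|$. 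Since $v(s)/s\to\mathbf{1}$ and $\|v(s)\|/s\to\|\mathbf{1}\|=d$, we get $x(s)\to \mathbf{1}/d=\phi_0$ as $s\to\infty$, while $x(0)=\phi$. The two properties above then give the central identity $I(x(s))=I(v(s))/\|v(s)\|=I(\phi)/\|v(s)\|=c/\|v(s)\|$, so the entire path lies in $I_c=\{I\le c\}$ as soon as $\|v(s)\|\ge 1=\|\phi\|$ for all $s\ge 0$.

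Thus everything reduces to a single monotonicity estimate. Grouping the coordinates of $v(s)$ by nodal type gives $\|v(s)\|=\delta^+(\frac1\delta+s)+\delta^0 s+\delta^-\,|s-\frac1\delta|$, which equals $1$ at $s=0$, has one-sided slope $\delta^+ +\delta^0-\delta^-$ on $(0,1/\delta)$, and slope $d>0$ beyond $s=1/\delta$. Hence $\|v(s)\|$ is nondecreasing, and in particular stays $\ge 1$, exactly when the balance inequality $\delta^+ +\delta^0\ge\delta^-$ holds. I expect this balance to be the real content of the lemma: it is precisely the condition guaranteeing that raising every coordinate does not let the weighted mass carried by the negative part drive $\|v(s)\|$ below $1$ (which would make $I(x(s))=c/\|v(s)\|$ exceed $c$ and push the path out of $I_c$).

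The main obstacle, then, is justifying $\delta^+ +\delta^0\ge\delta^-$ for the $\phi$ at hand. This is nothing but the membership $\phi\in\pi$ supplied by Lemma 5.10, where $|\delta^+-\delta^-|\le\delta^0$ yields both $\delta^-\le\delta^+ +\delta^0$ and $\delta^+\le\delta^- +\delta^0$; the symmetric shift $v(s)=\phi-s\mathbf{1}$ would analogously reach $-\phi_0$ under the second inequality. (Without such balance the elementary ray genuinely leaves $I_c$, for instance for a strongly pendant-heavy configuration, so this estimate cannot be skipped.) Once the norm bound is in hand, I would finish by reparametrizing the half-line to a compact interval, e.g. $s=t/(1-t)$ for $t\in[0,1)$ with $\gamma(1):=\phi_0$, and checking continuity at the right endpoint from $x(s)\to\phi_0$. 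The resulting map $\gamma:[0,1]\to X$ is then continuous, satisfies $\gamma(0)=\phi$, $\gamma(1)=\phi_0$, and $I(\gamma(t))=c/\|v(s(t))\|\le c$ for all $t$, which is the desired path in $I_c$.
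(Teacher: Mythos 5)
Your path is, up to the reparametrization $s=(1-t)/(td)$, exactly the paper's path $\phi_t=g(t)^{-1}\phi^t$ with $\phi^t=t\phi+(1-t)\phi_0$, and your reduction to the balance inequality $\delta^++\delta^0\ge\delta^-$ (supplied by $\phi\in\pi$, and indeed used tacitly in the paper's proof as well, since it is not in the lemma's hypotheses) is the same monotonicity computation the paper carries out for $t/g(t)$. The only cosmetic difference is that you evaluate $I$ along the path via translation invariance and positive homogeneity where the paper invokes Theorem 3.6, so the argument is correct and essentially identical to the paper's.
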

\begin{proof}
Define
$$\phi^t=(\frac{t}{d-\delta^0}+\frac{1-t}{d})E^+ + (\frac{-t}{d-\delta^0}+\frac{1-t}{d})E^- +\frac{1-t}{d}E^0,\,\,\,\,t\in [0, 1],$$
and
$$\phi_t=g(t)^{-1} \phi^t, $$
where
$g(t)=|\phi^t|$ and $|x|=\Sigma^n_{i=1}d_i|x_i|$.
Thus, $\phi_t\in X$. According to Theorem 3.6, we have
\begin{align*}
I(\phi^t)&=\Sigma_{\alpha, \beta} Z_{\alpha\beta}\frac{2t}{d-\delta^0}+(\Sigma_\alpha Z_\alpha^\downarrow + \Sigma_\beta Z_\beta^\uparrow) \frac{t}{d-\delta^0}\\
&=\frac{t}{d-\delta^0}[\Sigma_{\alpha, \beta} 2Z_{\alpha\beta}+(\Sigma_\alpha Z_\alpha^\downarrow + \Sigma_\beta Z_\beta^\uparrow)]\\
&=t I(\phi)
\end{align*}

Therefore
$$I(\phi_t)=\frac{t}{g(t)}I(\phi),$$
and
\begin{equation*}g(t)= \left\{\begin{array} {l}
\frac{\delta^+ +\delta^0- \delta^-}{d}+\frac{2\delta^-}{d}t,\,\,\,\,\,\mbox{if}\,t\ge t_0:=\frac{d-\delta^0}{2d-\delta^0}, \\
1- \frac{2\delta^-}{\delta^+ + \delta^-}t,\,\,\,\,\,\mbox{if}\,t\le t_0.
\end{array}\right.
\end{equation*}
Noticing
\begin{equation*}(\frac{t}{g(t)})'= \frac{1}{g(t)^2} \left\{\begin{array} {l}
\frac{\delta^+ +\delta^0- \delta^-}{d},\,\,\,\,\,\mbox{if}\,t\ge t_0, \\
1,\,\,\,\,\,\mbox{if}\,t\le t_0.
\end{array}\right.
\end{equation*}
Since $|\delta^+-\delta^-|\le \delta^0$, $I(\phi_t)$ is increasing, we have $I(\phi_t)\le I(\phi)=c$.
 The path $\{\phi_t \,|\, t\in [0, 1]\}$ is in the level set $I_c$.
\end{proof}
\vskip 0.5cm

Recall Theorem 3.12,and lemma 5.9, all eigenvectors with eigenvalues $\mu\neq 0$ lie on the compact subset:
$$ \pi=\{x\in X\,|\, |\delta^+(x)-\delta^-(x)|\le \delta^0(x)\}.$$
Let us define
$$ m=Min\{ I(x)\,|\, x\in \pi\}.$$
and then turn to prove
\vskip 0.5cm
\begin{theorem}
If $G$ is connected, then
$$ c_2=\mu_2=m.$$
\end{theorem}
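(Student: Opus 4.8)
The plan is to prove the two equalities through the chain $c_2\le m$, then $c_2\ge m$ (so $c_2=m$), and finally to identify $m$ with the smallest positive eigenvalue $\mu_2$. First note that $m>0$: for a connected graph $I(x)=0$ forces $x$ to be constant, hence $x=\pm x_0$ with $x_0=d^{-1}\textbf{1}$, and a direct count gives $\delta^+(\pm x_0)=d,\ \delta^-=\delta^0=0$, so $\pm x_0\notin\pi$. Since $\pi$ is compact (Lemma 5.9) and $I$ is continuous and strictly positive on it, $m=\min_\pi I>0$.

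For $c_2\le m$, I would build a symmetric set of genus $\ge 2$ inside the sublevel set $\{I\le m\}$. Let $\phi$ be a minimizer furnished by Lemma 5.10, so $\phi=\delta^{-1}(E^+-E^-)$ with $I(\phi)=m$. Apply Lemma 5.11 to $\phi$ to obtain a path $P$ from $\phi$ to $\phi_0=\tfrac1d(E^++E^-+E^0)=x_0$ inside $I_m$. Because $-\phi$ is again a normalized vector (with the positive and negative nodal domains exchanged) and its associated $\phi_0$ is the \emph{same} point $x_0=d^{-1}\textbf{1}$, Lemma 5.11 also yields a path $Q$ from $-\phi$ to $x_0$ inside $I_m$. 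Concatenating $P$ with the reverse of $Q$ gives a path $L$ from $\phi$ to $-\phi$ along which $I\le m$; then $A=L\cup(-L)$ is symmetric, avoids $\theta$, and joins the antipodal pair $\phi,-\phi$. Since an odd continuous map $A\to S^0$ would be constant on the path-component containing both $\phi$ and $-\phi$, no such map exists, whence $\gamma(A)\ge 2$ while $\max_A I\le m$. Therefore $c_2\le\max_A I\le m$.

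For $c_2\ge m$, I would show that every symmetric $A$ with $\gamma(A)\ge 2$ meets $\pi$, so that $\max_A I\ge\min_\pi I=m$. Write $X\setminus\pi=\pi^+\cup\pi^-$ with $\pi^\pm=\{x:\pm(\delta^+(x)-\delta^-(x))>\delta^0(x)\}$; these are disjoint and $\pi^-=-\pi^+$. The key point is that $\pi^+$ is open in $X$: under a small perturbation a vertex can only leave $D^0$ for $D^\pm$, so $\delta^0$ cannot increase and $\delta^+,\delta^-$ cannot decrease; if $a,b\ge 0$ denote the masses moved into $D^+,D^-$, then the quantity $(\delta^+-\delta^-)-\delta^0$ changes by $+2a\ge 0$, so the strict inequality defining $\pi^+$ persists nearby. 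Consequently the map $X\setminus\pi\to S^0$ sending $\pi^+\mapsto 1,\ \pi^-\mapsto -1$ is continuous and odd, giving $\gamma(X\setminus\pi)\le 1$. If some $A$ with $\gamma(A)\ge 2$ were contained in $X\setminus\pi$, monotonicity of the genus would force $\gamma(A)\le 1$, a contradiction; hence $A\cap\pi\ne\emptyset$. This proves $c_2\ge m$, and with the previous step $c_2=m$.

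Finally, for $\mu_2=m$: by Theorem 4.14 every $c_k$ is an eigenvalue, so $c_2=m>0$ is a positive eigenvalue. Conversely, by Theorem 3.12 every eigenvector with eigenvalue $\mu\ne 0$ lies in $\pi$, and by Corollary 2.5 its eigenvalue equals $I$ of that eigenvector, so every positive eigenvalue is $\ge\min_\pi I=m$. Since $G$ is connected, $\mu_1=0$ is simple (Theorem 5.4), so $\mu_2$ is the smallest positive eigenvalue, and the two bounds give $\mu_2=m=c_2$. The step I expect to be the main obstacle is the genus computation $\gamma(X\setminus\pi)\le 1$ — in particular verifying the openness of $\pi^\pm$ from the semicontinuous behaviour of $\delta^\pm,\delta^0$ — together with the correct assembly of the genus-$2$ set from Lemma 5.11; the remaining estimates are routine given the earlier results.
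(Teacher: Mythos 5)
Your proof is correct and follows the paper's overall skeleton --- establish $0<m=c_2$ via the compactness of $\pi$ (Lemma 5.9), the normalized minimizer of Lemma 5.10, and the paths of Lemma 5.11, then sandwich $\mu_2$ between $m$ and $c_2$ using Theorem 3.12 and Corollary 2.5 --- but you handle the inequality $c_2\ge m$ by a genuinely different device. The paper simply asserts that $c_2$ is attained at some $a\in K\subset\pi$; this tacitly uses $K_{c_2}\subset\pi$, hence $c_2\neq 0$, so strictly speaking it leans on the simplicity of the eigenvalue $0$ to avoid circularity. You instead prove the intersection property directly: writing $X\setminus\pi=\pi^+\cup\pi^-$ with $\pi^-=-\pi^+$ and checking, via the semicontinuity of $\delta^\pm,\delta^0$ (the quantity $(\delta^+-\delta^-)-\delta^0$ changes by $2a\ge 0$ under a small perturbation), that each of $\pi^\pm$ is open, you obtain an odd continuous map $X\setminus\pi\to S^0$, hence $\gamma(X\setminus\pi)\le 1$, and monotonicity of the genus forces every admissible $A$ to meet $\pi$. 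This is self-contained and arguably tighter than the paper's version of that step. Your genus-two set is also a minor variant: the closed curve $L\cup(-L)$ through the single midpoint $\phi_0=d^{-1}\mathbf{1}$ (correctly observing that $-\phi$ produces the \emph{same} $\phi_0$) rather than the paper's four-arc loop through $\phi_0$ and $-\phi_0$, and your argument that $\gamma\ge 2$ --- an odd map to $S^0$ would be constant on the connected set containing the antipodal pair $\phi,-\phi$ --- is more careful than the paper's bare assertion $A_0\simeq S^1$. One trivial slip: the result you cite as Theorem 4.14 is Theorem 4.13 in the paper (resting on Theorems 4.10 and 4.11).
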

\begin{proof}
We only need to prove: $0<m=c_2.$ Once it is shown, then $m$ is a positive eigenvalue according to Theorem 4.11.
Since all eigenvectors with positive eigenvalues lie in $\pi$, and $m$ is the minimum on $\pi$, it follows $m\le \mu_2\le c_2.$

We first prove:  $c_2\ge m$. In fact, there exists $a\in K \subset \pi,$ such that $c_2=I(a)$. Therefore
$$ c_2 \ge Min\{I(x)\,|\, x\in \pi\}=m. $$

Next, we verify $c_2\le m$ and $m>0$

$1^o.$ Since $I$ is continuous, and $\pi$ is compact, the minimum of $I$ on $\pi$ is achieved. Let it be $\phi\in \pi$, then $m=I(\phi)$.
\vskip 0.5cm
$2^o.$ We conclude: $m>0.$ For otherwise, $I(\phi)=0.$ However, we assumed that $G$ is connected, this implies $\phi=\phi_0=\frac{1}{d}\textbf{1}$. But, $\phi_0\notin \pi$. This is a contradiction.
\vskip 0.5cm
$3^o.$ According to lemma 5.11, we obtain a path $\gamma_1$ connecting $\phi$ and $\phi_0$ in $I_m\cap X$.
\vskip 0.5cm
Similarly, we have paths $\gamma_2$ connecting $\phi_0$ and $-\phi$, $\gamma_3$ connecting $-\phi$ and $-\phi_0$, $\gamma_4$ connecting $-\phi_0$ and $\phi$ in $I_m\cap X$.
Let
$$ A_0=\gamma_1 \circ \gamma_2 \circ \gamma_3 \circ \gamma_4 .$$
We have
$A_0\subset I_m\cap X$, and $A_0\simeq S^1.$ Thus $\gamma(A_0)\ge 2$, and then
$$ c_2=\inf_{\gamma(A)\ge 2} \sup_{x\in A} \tilde{I}(x)\le m,$$
i.e., $c_2\le m.$
\end{proof}
\vskip 0.5cm

From this theorem, one may compute the second eigenvalue by minimizing the function $I$ over the subset $\pi$ of $X$.

As an application of Theorem 5.8,  the following sufficient conditions for $\mu_2<1$ is presented.

\begin{theorem}
Let $G$ be a connected graph. If there are two groups of vertices $\{\alpha_1, \cdots, \alpha_k\},\,  \{\beta_1, \cdots, \beta_l\}$ satisfying:
\begin{enumerate}
\item $\Sigma^k_{i=1}d_{\alpha_i}= \Sigma^l_{j=1}d_{\beta_j},$ denoted by $c$,
\item In one of the two groups, there is at least a pair of adjacent vertices.
\end{enumerate}
Then $0<\mu_2\le 1-\frac{1}{2c}$.
\end{theorem}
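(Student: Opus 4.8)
The plan is to reduce the upper bound to the exhibition of a single test vector, relying on the variational characterization already in hand. By Theorem 5.12, for a connected graph $G$ the second eigenvalue equals the constrained minimum
$$ \mu_2 = m = \min_{x\in\pi} I(x), \qquad \pi = \{x\in X \,|\, |\delta^+(x)-\delta^-(x)| \le \delta^0(x)\}. $$
Hence it suffices to produce one admissible $\phi\in\pi$ with $I(\phi)\le 1-\tfrac{1}{2c}$. The strict positivity $0<\mu_2$ comes for free: since $G$ is connected, the proof of Theorem 5.12 shows $m>0$ (equivalently, the eigenvalue $0$ is simple by Corollary 5.5), so $\mu_2=m>0$.

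For the test vector, write $A=\{\alpha_1,\dots,\alpha_k\}$ and $B=\{\beta_1,\dots,\beta_l\}$, which we take disjoint (as is forced if we wish to assign opposite signs), set $E^+=\sum_{i\in A}\mathbf{e}_i$, $E^-=\sum_{j\in B}\mathbf{e}_j$, and put
$$ \phi = \frac{1}{2c}\,(E^+ - E^-). $$
The positive nodal vertices of $\phi$ are exactly $A$ and the negative ones exactly $B$, so $\delta^+=\sum_{i\in A}d_i=c$ and $\delta^-=\sum_{j\in B}d_j=c$ by hypothesis (1). Therefore $\sum_i d_i|\phi_i| = \tfrac{1}{2c}(\delta^++\delta^-)=1$, giving $\phi\in X$, while $|\delta^+-\delta^-|=0\le\delta^0$ gives $\phi\in\pi$; here $\delta^0=d-2c\ge 0$ because $A,B$ are disjoint.

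It remains to estimate $I(\phi)$. Since $\phi$ is constant on each nodal domain and on $D^0$, Theorem 3.6 applies and, with all the nodal values equal to $\tfrac{1}{2c}$, yields
$$ I(\phi) = \frac{1}{c}\,E_{AB} + \frac{1}{2c}\,(E_{A0}+E_{B0}), $$
where $E_{AB}$ counts edges between $A$ and $B$, and $E_{A0},E_{B0}$ count edges from $A$, respectively $B$, to the null set $D^0$. Counting edge-endpoints inside each group turns hypothesis (1) into $2e_A+E_{AB}+E_{A0}=c$ and $2e_B+E_{AB}+E_{B0}=c$, where $e_A,e_B$ are the numbers of edges internal to $A,B$. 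Adding these cancels the cross terms and gives
$$ I(\phi) = \frac{2E_{AB}+E_{A0}+E_{B0}}{2c} = \frac{2c-2e_A-2e_B}{2c} = 1 - \frac{e_A+e_B}{c}. $$
By hypothesis (2) one of the two groups carries an internal edge, so $e_A+e_B\ge 1$, whence $I(\phi)\le 1-\tfrac1c\le 1-\tfrac{1}{2c}$, which completes the upper bound.

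The argument is essentially bookkeeping, so I do not expect a genuine obstacle; the only points demanding care are the membership $\phi\in\pi$, which rests squarely on the equal-degree hypothesis (1) forcing $\delta^+=\delta^-$, and the clean evaluation of $I(\phi)$ through Theorem 3.6 together with the endpoint-counting identities. I would also remark that the computation in fact delivers the sharper estimate $\mu_2\le 1-\tfrac{e_A+e_B}{c}\le 1-\tfrac1c$, so the stated bound $1-\tfrac{1}{2c}$ holds a fortiori.
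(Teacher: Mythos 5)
Your proof is correct and follows essentially the same route as the paper: the same test vector $\phi=\frac{1}{2c}(E^{+}-E^{-})$ is inserted into the variational characterization $\mu_2=\min_{x\in\pi}I(x)$ from Theorem 5.12, with positivity coming from connectedness. Your exact degree-sum bookkeeping ($2e_A+E_{AB}+E_{A0}=c$ and its analogue for $B$) is tidier than the paper's one-line estimate of the bracket by $2c-1$, and it indeed delivers the slightly sharper bound $\mu_2\le 1-\frac{e_A+e_B}{c}\le 1-\frac{1}{c}$.
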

\begin{proof}
Let us define
$$ x= \frac{1}{2c}(\Sigma^k_{i=1}\textbf{e}_{\alpha_i}- \Sigma^l_{j=1}\textbf{e}_{\beta_j}), $$
where $c=\Sigma^k_{i=1}d_{\alpha_i}.$
Obviously, $x\in X,$ and $\Sigma m_\alpha x_\alpha d_{i_\alpha}=0,$ i.e., $x\in \pi$. Assume without loss of generality:
$\alpha_1\sim \alpha_2.$ Let $D^0$ be the nodal set of $x$. We have
$$ I(x)=\Sigma_{j\sim i} |x_i-x_j|=\frac{1}{2c}[\Sigma_{\beta_j\sim \alpha_i} 2+\Sigma_{\beta_j\sim D^0} 1
+\Sigma_{\alpha_i\sim D^0} 1].$$
The total adjacent numbers starting from $\{\alpha_1, \cdots, \alpha_k\}$ or from $\{\beta_1, \cdots, \beta_l\}$ are at most $c$. But, at least one of the adjacent pairs is between $\alpha_1$ and $\alpha_2$, which does not have contribution in the summation. Thus
$$ \Sigma_{\beta_j\sim \alpha_i} 2+\Sigma_{\beta_j\sim D^0} 1
+\Sigma_{\alpha_i\sim D^0} 1\le 2c-1.$$
Following Theorem 5.12, we obtain
$$ \mu_2\le I(x)\le 1-\frac{1}{2c}.$$
\end{proof}
\vskip 0.5cm

\subsection{Cheeger's constant}

Given a graph $G=(V, E)$ and a subset of vertices $S \subset V$, the volume of $S$ is defined to be
$$ Vol(S)=\sum_{i\in S} d_i.$$
Let $\bar{S}=V\backslash S$. The edge boundary of $S$ is
$$ \partial S=\{ e=(i,j)\in E\,|\,\mbox{either}\,i\in S, j\notin S,\,\,\mbox{or} \,\,j\in S, i\notin S\}.$$
Thus,
$$\partial S=\partial \bar{S}.$$
It is denoted by $E(S, \bar{S})$.

The number
$$ h(G)=Min_{S}\frac{|E(S, \bar{S})|}{min(Vol(S), Vol(\bar{S}))}$$
is called Cheeger's constant\cite{refCe}, where $|S|$ is the cardinal number of $S$.
\vskip 0.5cm
The following lemma is adapted from the proof of Theorem 2.9 in Chung\cite{refCh}.
\begin{lemma} There exists a vector $y=(y_1, \cdots y_n)\in R^n$ such that
$$ h(G)=sup_{c\in R^1} \frac{\Sigma_{j\sim i}{|y_i-y_j|}}{\Sigma_{i=1}^n|y_i-c|d_i}.$$
\end{lemma}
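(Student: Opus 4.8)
The plan is to construct $y$ explicitly from an optimal Cheeger cut and then verify the identity by a direct two-step computation. Since $V$ is finite, the minimum defining $h(G)$ is attained; let $S_0$ be a nonempty proper subset of $V$ with
$$h(G)=\frac{|E(S_0,\bar{S}_0)|}{\min(Vol(S_0),Vol(\bar{S}_0))},$$
and take $y=\chi_{S_0}$, the indicator vector with $y_i=1$ for $i\in S_0$ and $y_i=0$ otherwise. The guiding intuition is that the total-variation numerator of a two-valued function counts exactly the edges crossing the level set, while choosing the best $c$ amounts to placing the threshold at the weighted median of $y$, which collapses the denominator onto the smaller of the two volumes.

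First I would evaluate the numerator for this fixed $y$. Because $y$ takes only the values $0$ and $1$, the quantity $|y_i-y_j|$ equals $1$ exactly when the edge $(i,j)$ has one endpoint in $S_0$ and one in $\bar{S}_0$, and vanishes otherwise; hence $\sum_{j\sim i}|y_i-y_j|$ equals the number of boundary edges $|E(S_0,\bar{S}_0)|$. This step is pure bookkeeping, the only point requiring care being that each crossing edge is counted once, consistently with the edge-sum form $I(y)=\sum_{e\in E}|(By)_e|$ used in Section 2.

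Next I would study the denominator as a function of $c$. With $y=\chi_{S_0}$,
$$g(c):=\sum_{i=1}^n|y_i-c|d_i=|1-c|\,Vol(S_0)+|c|\,Vol(\bar{S}_0),$$
which is convex and piecewise linear with breakpoints at $c=0$ and $c=1$. Since the numerator is now a fixed nonnegative constant, $\sup_{c}$ of the quotient is achieved where $g$ is smallest, so the whole problem reduces to minimizing $g$. Examining the three linear pieces on $(-\infty,0]$, $[0,1]$, and $[1,\infty)$ shows $g$ decreases toward an endpoint of $[0,1]$ and that $\min_c g(c)=\min(Vol(S_0),Vol(\bar{S}_0))$, attained at $c=0$ or $c=1$ according to which volume is smaller. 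This minimization — the statement that the optimal $c$ is the weighted median of $y$ — is the only genuinely computational part and the step I expect to require the most care.

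Combining the two computations yields
$$\sup_{c\in\mathbb{R}^1}\frac{\sum_{j\sim i}|y_i-y_j|}{\sum_{i=1}^n|y_i-c|d_i}=\frac{|E(S_0,\bar{S}_0)|}{\min(Vol(S_0),Vol(\bar{S}_0))}=h(G),$$
which is exactly the asserted equality for this choice of $y$. Note that $\min_c g(c)>0$ because $S_0$ is a proper nonempty subset, so the supremum is finite and the quotient is well defined; as $c\to\pm\infty$ the denominator blows up and the quotient tends to $0$, confirming that the supremum is attained at finite $c$. I anticipate no serious obstacle beyond keeping the edge-counting normalization consistent with the definition of $I$ and carefully checking that $g$ attains its minimum at an endpoint of $[0,1]$.
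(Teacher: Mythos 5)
Your proposal is correct and follows essentially the same route as the paper: take the two-valued indicator of an optimal Cheeger cut and check that the optimal shift $c$ collapses the denominator onto the smaller volume. The only cosmetic difference is that you use the $0$--$1$ indicator where the paper uses the $\pm 1$ indicator; the two are related by an affine change of variables under which the quotient is invariant, so the computations match.
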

\begin{proof}
By definition there exists a subset $S\subset V$ such that
$$h(G)=\frac{E(S, \bar{S})}{Vol(S)}, \,\,Vol(S)\le Vol(\bar{S}).$$
Define
\begin{equation*}y_i=\left\{\begin{array} {l}
1,\,\,\,\,\,\mbox{if}\,i\in S, \\
-1,\,\,\,\,\,\mbox{if}\,i\notin S.
\end{array}\right.
\end{equation*}
$i=1, \cdots, n$, the vector $y=(y_1, \cdots, y_n)$ is what we need. Indeed,
\begin{align*}
&sup_{c\in R^1} \frac{\Sigma_{j\sim i}{|y_i-y_j|}}{\Sigma_{i=1}^n|y_i-c|d_i}\\
&=Max_{|c|\le 1} \frac{\Sigma_{j\sim i}{|y_i-y_j|}}{\Sigma_{i=1}^n|y_i-c|d_i}\\
&=Max_{|c|\le 1} \frac{2 |E(S, \bar{S})|}{(1-c)Vol(S)+(1+c)Vol(\bar{S})}\\
&=\frac{|E(S, \bar{S})|}{Vol(S)}=h(G).
\end{align*}
\end{proof}
\vskip 0.5cm

Moreover, $h(G)$ has the following Minimax characterization:
$$ h(G)=inf_{x\in \mathbb{R}^n\backslash\{t\bar{1}\,|\,t\in R^1\}} sup_{c\in R^1} \frac{\Sigma_{j\sim i}{|x_i-x_j|}}{\Sigma_{i=1}^n|x_i-c|d_i}.$$
Cf. Theorem 2.9 Chung\cite{refCh}.

Applying the above results, we establish the relationship between $h(G)$ and $\mu_2.$ Namely,
\begin{theorem} Assume that $G=(V, E)$ is connected, then
$$     \mu_2 = h(G).$$
\end{theorem}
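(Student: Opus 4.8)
The plan is to build on Theorem 5.12, which already gives $\mu_2=c_2=m$ with $m:=\min_{x\in\pi}I(x)$, and on the minimax formula for $h(G)$ recorded just after Lemma 5.14, namely
\[
h(G)=\inf_{x\in\mathbb{R}^n\setminus\{t\bar 1\}}\ \sup_{c\in\mathbb{R}^1}\frac{\Sigma_{j\sim i}|x_i-x_j|}{\Sigma_{i=1}^n|x_i-c|d_i}.
\]
Since the numerator of the Cheeger quotient is exactly $I(x)$, the whole theorem reduces to comparing $m$ with this minimax value, and I would prove the two inequalities $\mu_2\le h(G)$ and $\mu_2\ge h(G)$ separately. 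The bridge in both directions is one elementary fact: for a vector $v$ the convex piecewise-linear function $g(c)=\Sigma_{i=1}^n d_i|v_i-c|$ is minimized at $c=0$ if and only if $|\delta^+(v)-\delta^-(v)|\le\delta^0(v)$, because the left and right slopes of $g$ at $0$ are $\delta^--(\delta^++\delta^0)$ and $(\delta^-+\delta^0)-\delta^+$. In other words, $0$ is a weighted median of $v$ precisely when $v\in\pi$. This identification of the $\pi$-constraint with the optimal choice of the shift $c$ is the conceptual heart of the argument, and I expect the bookkeeping around it to be the main obstacle.

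For $\mu_2\le h(G)$ I would start from the vector $y$ produced by Lemma 5.14, which attains $h(G)=\sup_c I(y)/\Sigma_i|y_i-c|d_i$. Let $c_0$ realize that supremum, i.e. $c_0$ minimizes $\Sigma_i d_i|y_i-c|$ and is therefore a weighted median of $y$. Setting $w=y-c_0\bar 1$ and normalizing $\tilde w=w/\|w\|$ with $\|w\|=\Sigma_i d_i|w_i|$, I get $\tilde w\in X$, and since shifting by a constant does not change differences, $I(\tilde w)=I(w)/\|w\|=I(y)/\Sigma_i d_i|y_i-c_0|=h(G)$. The weighted-median property of $c_0$ says $0$ is a weighted median of $w$, hence of $\tilde w$, so the median fact gives $\tilde w\in\pi$. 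Therefore $\mu_2=m=\min_\pi I\le I(\tilde w)=h(G)$.

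For the reverse inequality $\mu_2\ge h(G)$ I would use Lemma 5.10 to realize $m$ at a normalized minimizer $\phi=\delta^{-1}(E^+-E^-)\in\pi$. As $\phi\neq t\bar 1$, it is an admissible competitor in the minimax, so
\[
h(G)\le\sup_{c}\frac{I(\phi)}{\Sigma_i d_i|\phi_i-c|}=\frac{I(\phi)}{\inf_c\Sigma_i d_i|\phi_i-c|}.
\]
Because $\phi\in\pi$, the median fact shows the infimum is attained at $c=0$, where its value is $\Sigma_i d_i|\phi_i|=1$ since $\phi\in X$. Hence $h(G)\le I(\phi)=m=\mu_2$. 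Combining the two bounds yields $\mu_2=h(G)$; the hypothesis that $G$ is connected is used through Theorem 5.12 (so that $\mu_2=m$) and to guarantee $m>0$, so that the quotients above are well defined and the excluded constant vectors do not interfere.
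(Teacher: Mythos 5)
Your proposal is correct and follows essentially the same route as the paper: one direction shifts the Cheeger-optimal vector of Lemma 5.14 by a weighted median and normalizes it into $\pi$ to bound $m=\mu_2$ from above by $h(G)$, and the other plugs a normalized minimizer from $\pi$ into the minimax formula and uses $|\delta^+-\delta^-|\le\delta^0$ to show $\inf_c\Sigma_i d_i|\phi_i-c|=1$. The paper carries out the same median computation implicitly (via its explicit choice of $t$ and the two-case analysis of $c_0$), so your "weighted median iff $v\in\pi$" lemma is just a cleaner packaging of the identical argument.
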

\begin{proof}
$1^0$ We prove: $ h(G)\ge \mu_2.$ From the above lemma, there exists a vector $y=(y_1, \cdots, y_n)$ such that
$$ h(G)=sup_{c\in R^1}\frac{\Sigma_{j\sim i}|y_i-y_j|}{\Sigma^n_{k=1}|y_k-c|d_k},$$
while
$$\mu_2= Min_{z\in \pi} \Sigma_{j\sim i}|z_i-z_j|.$$
Now let us define $t\in R^1$ such that
$$ \Sigma_{y_i<t}d_i\le \Sigma_{y_j\ge t}d_j,$$
$$ \Sigma_{y_i\le t}d_i\ge \Sigma_{y_j> t}d_j,$$
and let
$w=y-t \bar{1}$, where $\bar{1}=(1, \cdots, 1)$, then we have
$$|\delta^+(w)-\delta^-(w)|\le \delta^0(w).$$
Let $z_i=\frac{w_i}{\Sigma^n_{k=1}d_k|w_k|},\,i=1\cdots, n,\,\, z=(z_1, \cdots, z_n),$
We have $z\in \pi.$ and
$$ sup_{c\in R^1}\frac{\Sigma_{j\sim i}|y_i-y_j|}{\Sigma^n_{k=1}|y_k-c|d_k}\ge\frac{\Sigma_{j\sim i} |y_i-y_j|}{\Sigma^n_{k=1}|y_k-t|d_k}=\frac{\Sigma_{j\sim i}|w_i-w_j|}{\sum^n_{k=1} |w_k|d_k}=\sum_{j\sim i}|z_i-z_j|,$$
Therefore
$$h(G)\ge Min_{z\in \pi} \sum_{j\sim i}|z_i-z_j| =\mu_2.$$

$2^o$ Now, we turn to prove $h(G)\le \mu_2.$  To this end, we take the normalized second eigenvector $\phi$ into consideration. Let
$\phi=(x_1, x_2, \cdots, x_n)$, where $x_i=\nu$ times $\pm 1$ or $0, \,i=1,2, \cdots, n,$ and $\nu$ is a constant satisfying:
\begin{align*}
&\Sigma^n_{i=1} d_i |x_i|=1,\,\,\,i.e., (\delta^+ +\delta^-)\nu=1,\\
&|\delta^+ -\delta^-|\le \delta^0.
\end{align*}
Let $c_0$ be the minimum of $\Sigma^n_{i=1}d_i |x_i-c|$.
For $c_0\ge 0$,
\begin{align*}
&\Sigma^n_{i=1}d_i |x_i-c_0|\\
&=\delta^+(\nu-c_0)+ \delta^-(\nu+c_0)+\delta^0c_0\\
&=\nu(\delta_+ + \delta_-)+c_0(\delta^0-(\delta_+ - \delta_-))\\
&\ge \nu(\delta_+ + \delta_-)=1.\\
\end{align*}
For $c_0\le 0$, we also have
\begin{align*}
&\Sigma^n_{i=1}d_i |x_i-c_0|\\
&=\delta^+(\nu+|c_0|)+ \delta^-(\nu-|c_0|)+\delta^0|c_0|\\
&=\nu(\delta_+ + \delta_-)+|c_0|(\delta^0+(\delta_+ - \delta_-))\\
&\ge \nu(\delta_+ + \delta_-)=1.\\
\end{align*}
In summary,
$$ \inf_{c\in R^1}\sum^n_{i=1} d_i |x_i-c|\ge 1.$$
Thus
\begin{align*}
&\mu_2=\Sigma_{j\sim i} |x_i-x_j|\\
&\ge  sup_{c\in R^1}\frac{\Sigma_{j\sim i}|x_i-x_j|}{\Sigma^n_{i=1}|x_i-c|d_i}\\
&\ge inf_{y\in \mathbb{R}^n\backslash\{t\bar{1}\,|\,t\in R^1\}}sup_{c\in R^1}\frac{\Sigma_{j\sim i}|y_i-y_j|}{\Sigma^n_{i=1}d_i |y_i-c|}\\
&=h(G).
\end{align*}
We have proved $h(G)=\mu_2.$
\end{proof}
\vskip 0.5cm

\textbf{Remark}

In the linear spectral graph theory, it has been proved [9]:
$$ \frac{\lambda_2}{2}\le h(G)\le \sqrt{2\lambda_2}.$$

However, Cheeger's constant for a connected graph $G$ is exactly the second eigenvalue of $\Delta_1(G)$. This is the motivation of our study of the nonlinear eigenvalue theory.
In fact, a result similar to Theorem 5.15 has been given in Hein and Buehler \cite{refHB} for the unweighted $L^1$-norm 1-Laplacian,  where the volume function is the cardinality of the set. In a slightly different context in Hein and Setzer \cite{refHS} the Cheeger cut is discussed.

\vskip 0.5cm
As an example, we consider the Petersen graph:
$G=(V, E)$, where $V=\{1,2 \cdots, 10\},$ and
$$ E=\{ e_{12},e_{23},e_{34},e_{45},e_{51},e_{16},e_{27},e_{38},e_{49},e_{5\,10},e_{68},e_{69},e_{79},e_{7 \,10},e_{8\,10}\}.$$
Thus $d_1=d_2=\cdots =d_{10}=3.$
\vskip 0.5cm
It is easily verified:
\begin{align*}
&\mu_{2}=\frac{1}{3},\\
&\phi=\frac{1}{15}(1,1,1,1,1,-1,-1,-1,-1,-1),\\
&z_{12}=z_{23}=z_{34}=z_{45}=z_{51}=0,\\
&z_{68}=z_{69}=z_{7\, 10}=z_{8\, 10}=z_{79}=0,\\
&z_{16}=z_{27}=z_{38}=z_{49}=z_{5\, 10}=1,\
\end{align*}
By taking $S=\{1,2,3,4 ,5\}, \bar{S}=\{6,7,8,9,10\}$, we have
$$ E(S, \bar{S})=5, \,\,\,\,\, vol(S)=vol(\bar{S})=15,\,\,\,\,\,h(G)=\frac{1}{3}.$$
While $\lambda_2=\frac{2}{3}$, see [1].
\vskip 0.5cm

\section{1-Laplacian Spectral for Some Special Graphs}

In this section, we study the spectral of 1-Laplacian for some special graphs.
\subsection{$P_n$}
 A path with $n$ vertices is a graph of a sequence of $n$ vertices, starting from $1$ and ending at $n$ such that consecutive vertices are adjacent. It is denoted by $P_n$. In this graph
$$ d_1=d_n=1, \, d_2=d_3=\cdots=d_{n-1}=2.$$
Let $\phi=\Sigma^n_{i=1}x_i \textbf{e}_i$ be an eigenvector, with eigenvalue $\mu$. Then they satisfy the system:
\begin{equation*}(6.1)\left\{\begin{array} {l}
z_{12}\in \mu  Sgn (x_1),\\
z_{23}-z_{12}\in 2\mu Sgn (x_2)\\
\cdots,  \cdots\\
z_{n-1, n}-z_{n-2, n-1}\in 2\mu Sgn (x_{n-1})\\
-z_{n-1 n}\in \mu Sgn (x_n).\,\,\,\,\,\,\,\,\,\,\,\,\,\,
\end{array}\right.
\end{equation*}
It is known that $\mu=0$ is simple, with eigenvector $\frac{1}{2(n-1)} \textbf{1}$. We turn to case $\mu\in (0,1)$.

$1^o.$ We claim: $x_1\neq 0.$ For otherwise, $x_1=0$ implies $z_{12}=0,$ i.e., $0\in Sgn (x_1-x_2)$, it must be $x_2=0$. Repeating the deduction, it follows,
$$ x_n=x_{n-1}=\cdots =x_1=0.$$
This is impossible.

$2^o.$ No loss of generality, we assume $x_1>0.$ From the first equation
$$ \mu=z_{12}\in Sgn(x_1-x_2),$$
It must be $x_2=x_1$, and then by the second equation
$$z_{23}-z_{12}=2\mu.$$
This implies
$$ 3\mu\in Sgn (x_2-x_3).$$
Thus $x_3\le x_2$.

If $x_3<x_2$, then $\mu=\frac{1}{3}.$ Otherwise, $x_3=x_2,$ we repeat the procedure.

On the other side, we start from $x_n$, by the same procedure, and conclude: $0\neq x_n=x_{n-1.}$ Either
$\mu=\frac{1}{3}$ or $x_{n-2}\le x_{n-1},$ etc.
We define
\begin{equation*}\phi_{k+1}=\left\{\begin{array} {l}
\frac{1}{2(n-2k+1)}\Sigma^{r-k+1}_{i=1}(\textbf{e}_i-\textbf{e}_{n-i+1}),\,\,\,\,n=2r,\\
\frac{1}{2(n-2k)}\Sigma^{r-k+1}_{i=1}(\textbf{e}_i-\textbf{e}_{n-i+1}),\,\, \,\,\,n=2r+1.
\end{array}\right.
\end{equation*}
\begin{equation*}\mu_{k+1}=\left\{\begin{array} {l}
\frac{1}{n-2k+1},\,\,n=2r,\\
\frac{1}{(n-2k)}, \,\,\,n=2r+1.
\end{array}\right.
\end{equation*}
$k=1,2,\cdots, r-1.$ It is easy to verify that they satisfy the system (6.1).

\vskip 0.5cm
Finally, we study the case $\mu=1.$ Following lemma 5.1, each nodal domain consists of a single vertex. This means that no two consecutive vertices having the same sign.  Let
$$\phi_{r+k+1}=
\frac{1}{2(2k+1)}[\Sigma^{k}_{i=1}(\textbf{e}_{2i-1}-\textbf{e}_{2i})+(\textbf{e}_{2k+1}-\textbf{e}_n)],$$
$k=0,1,2,\cdots, r-1,$ either $n=2r$ or $n=2r+1,$ but in case $n=2r+1,$ we add
$$\phi_n=\frac{1}{4}(-\textbf{e}_1 + \textbf{e}_{r+1}-\textbf{e}_n).$$
Then,
\begin{equation*}sgn (x_j)=\left\{\begin{array} {l}
(-1)^{j-1},\,\,j=1,2, \cdots, 2k+1,\\
0, \,\,\,j=2k+1, \cdots, 2r-1,\\
-1, \,\,\,\,\,j=n
\end{array}\right.
\end{equation*}
By definition,
$$ z_{j, j+1}(\phi_{r+k+1})=(-1)^{j-1}\,\,\,\,\,j=1,2, \cdots, 2k+1.$$
$$ z_{n-1, n}(\phi_{r+k+1})=1$$
They satisfy the system (6.1).
\vskip 0.5cm

Now we have the conclusion:

\begin{equation*}n=2r:\left\{\begin{array} {l}
\mu_1=0,\,\,\,\,\,\,\,\,\phi_1=\frac{1}{2(n-1)}(1,\cdots, 1, 1\cdots,1)\\
\mu_2=\frac{1}{n-1},\,\,\phi_2=\frac{1}{2(n-1)}(1,\cdots, 1,-1\cdots ,-1)\\
\cdots,  \cdots\\
\mu_{k+1}=\frac{1}{n-2k+1},\,\phi_{k+1}(1,\cdots,1,0,\cdots,0,-1, \cdots,-1),\\
k=1,2, \cdots r-1\\
\mu_{r+k}=1,\,\,\,\phi_{r+k}=\frac{1}{2(k+1)}(1,-1,\cdots,-1,1,0,\cdots, 0, -1)\\
k=0,1,2,\cdots r-1.
\end{array}\right.
\end{equation*}
\vskip 0.5cm

\begin{equation*}n=2r+1:\left\{\begin{array} {l}
\mu_1=0,\,\,\,\,\,\,\,\,\phi_1=\frac{1}{2(n-1)}(1,\cdots, 1, 1\cdots,1)\\
\mu_2=\frac{1}{n-2},\,\,\phi_2=\frac{1}{2(n-1)}(1,\cdots, 1,0,-1\cdots ,-1)\\
\cdots,  \cdots\\
\mu_{k+1}=\frac{1}{n-2k},\,\phi_{k+1}(1,\cdots,1,0,\cdots,0,-1, \cdots,-1),\\
k=1,2, \cdots r-1\\
\mu_{r+k}=1,\,\,\,\phi_{r+k}=(1,-1,\cdots,-1,1,0,\cdots, 0, -1)\\
k=0,1,2,\cdots r.\\
\mu_n=1,\,\,\,\,\,\phi_n=(-1,0, \cdots,0,1,0,\cdots, 0,-1).
\end{array}\right.
\end{equation*}

\vskip 0.5cm

\subsection{$C_n$}
A cycle with $n$ vertices is a connected graph, where every vertex has exactly two neighbors. It is denoted by $C_n$. In this graph
$$ d_1=d_2=\cdots=d_{n-1}=d_n=2.$$
The eigenpair system reads as:
\begin{equation*}(6.2)\left\{\begin{array} {l}
z_{12}-z_{n1}\in 2\mu Sgn (x_1),\\
z_{23}-z_{12}\in 2\mu Sgn (x_2),\\
\cdots,  \cdots\\
z_{n-1, n}-z_{n-2, n-1}\in 2\mu Sgn (x_{n-1}),\\
z_{n1}-z_{n-1 n}\in 2 \mu Sgn (x_n).
\end{array}\right.
\end{equation*}
Obviously, $\mu=0$ is simple, with eigenvector $\frac{1}{2n} \textbf{1}$.
\vskip 0.5cm

As for $\mu\in (0,1)$, either $n=2r$ or $n=2r+1$, we have the following:

\begin{equation*}\left\{\begin{array} {l}
\mu_{k+1}=\frac{1}{r-k+1},\\
\phi_{k+1}= \frac{1}{4(r-k+1)} (1,\cdots,1,0,\cdots,0,-1, \cdots,-1),\\
k=1,2, \cdots r-1,\, \mbox{there are}\, 2(k-1)\,\mbox{zeros, if}\, n=2r;\,\, 2k-1\, \mbox{zeros, if}\, n=2r+1. \\
\mu_{r+k}=1,\,\,\,\phi_{r+k}=(0\cdots,0,1,-1,\cdots,1,-1)\\
k=1,2,\cdots r, \,\mbox{there are}\, n-2k\, \mbox{zeros}. \\
\end{array}\right.
\end{equation*}
Since the graph is cyclic invariant, all eigenvectors after cyclic transformation are eigenvectors.

The spectrum of $C_n$ is $\{0, \frac{1}{r}, \cdots, \frac{1}{2}, 1\}.$

\vskip 0.5cm
\subsection{$K_n$}

A graph $G$ is called complete, if any two vertices are adjacent. A complete graph with $n$ vertices is denoted by $K_n$. In this case,
$$ d_1=d_2= \cdots, =d_n=n-1.$$
Since any two vertices are adjacent, The  possible numbers for nodal domains are $r^+=r^-=1$. Thus a normalized eigenvector is of the form
$$ x=\delta^{-1}(\Sigma_{i\in D^+}-\Sigma_{i\in D^-}) \textbf{e}_i$$
For $\mu\neq 0, $ let
$$ card(D^+)=card(D^-)=k,$$
we obtain:
$$ \delta^{\pm}=\Sigma_{i\in D^{\pm}} d_i=(n-1)k,$$
and $card(D^0)=n-2k,$ either $n=2r$, or $n=2r+1$, $k=1,2, \cdots, r$.
\vskip 0.5cm

The graph $K_n$ is invariant under permutation group $\textit{S}_n$, we write down the eigenvectors without specifying the coordinates of indices.
\begin{equation*}n=2r\left\{\begin{array} {l}
\mu_{k+2}=\frac{r+k}{n-1},\\
\phi_{k+2}= \frac{1}{(n-2k)(n-1)} (1,\cdots,1,0,\cdots,0,-1, \cdots,-1),\\
k=0, 1,2, \cdots r-2,\, \mbox{there are}\,2k\,\mbox{zeros}\\
\mu_{r+k}=1,\\
\phi_{r+k}=(0\cdots,0,1,-1,\cdots,1,-1)\\
k=1,2,\cdots r, \,\mbox{there are}\, n-2k\, \mbox{zeros}. \\
\end{array}\right.
\end{equation*}
and
\begin{equation*}n=2r+1\left\{\begin{array} {l}
\mu_{k+2}=\frac{r+k+1}{n-1},\\
\phi_{k+2}= \frac{1}{(n-2k-1)(n-1)} (1,\cdots,1,0,\cdots,0,-1, \cdots,-1),\\
k=0,1,2, \cdots r-2,\, \mbox{there are}\, 2k+1\,\mbox{zeros}\\
\mu_{r+k}=1,\\
\phi_{r+k}=\frac{1}{2k(n-1)}(0\cdots,0,1,-1,\cdots,1,-1)\\
k=1,2,\cdots r, \,\mbox{there are}\, n-2k\, \mbox{zeros}. \\
\end{array}\right.
\end{equation*}
The spectrum for $K_n$ is $\{0, \frac{n}{2(n-1)}, \cdots, \frac{n-2}{n-1}, 1\}$ if $n$ is even, and
$\{0, \frac{n+1}{2(n-1)}, \cdots, \frac{n-2}{n-1}, 1\}$ if $n$ is odd.

\vskip 0.5cm

\textbf{Remark}

Amghibech \cite{refAm} provides some explicit examples for the eigenvalues of the p-Laplacian for $p>1$ similar to the above results.
\vskip 0.5cm

\textbf{Acknowledgement}

I am grateful to the referees for their valuable comments and suggestions. Especially, I want to thank one of the referees for drawing my attention to several earlier results and providing the related articles, which have been added to the references. I am particularly grateful to Prof. J. Y. Shao, who carefully read the manuscript and made many useful comments, and would like to thank Dr. S. M. Liu and Mr. D. Zhang for their help in preparing the paper.

The paper is supported by Chinese NSF No. 61121002 and  No. 11371038.

\bibliographystyle{amsplain}

\end{document}